\setlist{nosep}
\subset \domdep{\pm}(A)$, but also $\domdep{\pm}(A)\subset\Jlight{\pm}(A)$. }
\leq \norm{x+y}{\mathcal{H}}^2+\norm{x-y}{\mathcal{H}}^2 = 2\norm{x}{\mathcal{H}}^2+2\norm{y}{\mathcal{H}}^2
\in \mathcal{H}$ Hilbert space. This explains the factor 2 in the third line which will be restored into the constant in front.}
\frac{\differ }{\differ t}(A^2u)=\dot{A}Au+A\dot{A}u \quad \leftrightarrow\quad A\dot{A}u=\dot{A}Au \quad .
\ast\mathrm{graph}(f):=\SET{(x,\xi,y,\eta)\in T^\ast(X\times Y)\,\vert\, y=f(x) \quad\text{and}\quad \xi\in(\differ f\vert_x)^{\dagger}(\SET{\eta})}\subset T^\ast X \times T^\ast Y
\ast\mathrm{graph}(f))\pitchfork T^\ast X \times \mathsf{\Lambda}_Y$ for the pullback and \\
\ast\mathrm{graph}(f))\pitchfork \mathsf{\Lambda}_X \times T^\ast Y$ for the pushforward. $\mathsf{\Lambda}_X$ and $\mathsf{\Lambda}_Y$ are Lagrangian submanifolds of the mapped Lagrangian distributions and $(\differ f\vert_p)^{\dagger}\,:\,T^\ast_{f(p)}Y\,\rightarrow\,T^\ast_p X$ is the adjoint map of the pushforward which is a priori defined for $f$ being a diffeomorphism, but used here for the preimage.}
\cap\range{P_{< 0}}\right)=\dim_\Upgamma(\range{P_{\emptyset}})=0 \\
\cap\range{P_{\geq 0}} \right)= \dim_\Upgamma\left(\quotspace{\range{P_{\geq 0}}}{\range{P_{\geq 0}}\cap\range{P_{>0}}}\right)\\
\Upgamma\range{P_{0}}=\dim_\Upgamma\kernel{A_\tau}< \infty \quad.
\theoremstyle{plain}
\newtheorem{theo}{Theorem}[section]
\newtheorem{lem}[theo]{Lemma}
\newtheorem{cor}[theo]{Corollary}
\newtheorem{prop}[theo]{Proposition}
\newtheorem{defi}[theo]{Definition}
\newtheorem{rem}[theo]{Remark}
\numberwithin{equation}{section}
\begin{document}

\title[$L^2$-Gamma-Fredholmness for spacetimes]
{$L^2$-Gamma-Fredholmness for spacetimes}

\author{Orville Damaschke}
\address{Universit\"at Oldenburg,
26129 Oldenburg,
Germany}
\email{orville.damaschke@math.uni-oldenburg.de}

\subjclass[2020]{47C15;47C99;58J22}
\date{\today}

\begin{abstract}
{Let $M$ be a temporal compact globally hyperbolic manifold with Cauchy hypersurface $\Sigma$ which is a Galois covering with respect to a discrete group $\Gamma$ of automorphisms such that the quotient $\Sigma/\Gamma$ is compact without boundary. We will show Fredholmness of a (spatial) $\Gamma$-invariant Lorentzian Dirac operator under (anti) Atiyah-Patodi-Singer boundary conditions in the von Neumann sense, known as ($L^2$-)$\Gamma$-Fredholmness.}
\end{abstract}

\maketitle
\tableofcontents

\section{Introduction and statement of the main result}\label{chap:Intro}

We continue our analysis of Atiyah-Singer Dirac operators on globally hyperbolic spacetimes with non-compact Cauchy boundary which we started with a well-posedness result of the Cauchy problem for the twisted Dirac operator in \cite{OD1}. We show $\Gamma$-Fredholmness of this Dirac operator in the setting where the Cauchy hypersurface $\Sigma$ is a Galois covering with respect to a (discrete) group $\Gamma$ such that the base $\Sigma_\Gamma:=\Sigma/\Gamma$ of the covering is a closed manifold. The globally hyperbolic manifold $M$ with such a Cauchy hypersurface then becomes a Galois covering on the spatial domain which we call spatial $\Gamma$-manifold. The spinor bundle $\spinb(M)$ becomes a $\Gamma$-vector bundle in this setting. We focus on an even dimensional manifold $M$ such that the spinor bundle decomposes into spinor bundles $\spinb^{\pm}(M)$ for positive respectively negative chirality. These are again $\Gamma$-vector bundles. \\ 
\\
We assume in this situation that the Dirac operator $\Dirac^{}:C^\infty(\spinb_{}(M))\rightarrow C^\infty(\spinb_{}(M))$ commutes with left action representation of the spatial action of $\Gamma$. The chiral decomposition then implies that the Dirac operators $D^{}_{\pm}:C^\infty(\spinb^{\pm}_{}(M))\rightarrow C^\infty(\spinb^{\mp}_{}(M))$ also commute or more precisely intertwine between the left action respresentation of $\Gamma$. This allows us to consider these operators as lifts of the corresponding Dirac operators on the compact base $M_\Gamma:=[t_1,t_2]\times \Sigma/\Gamma$.\\
\\
The $L^2$-$\Gamma$-Fredholmness to show is based on the theory of von Neumann algebras in association with $\Gamma$ which come with a regularised dimension function $\dim_\Gamma$, called $\Gamma$-\textit{dimension}. We will show that the kernel and the cokernel of the Dirac operators $D^{}_{\pm}$ do have finite $\Gamma$-dimensions which imply $\Gamma$-\textit{Fredholmness}. In order to do so, we impose (anti) Atiyah-Patodi-Singer boundary conditions (APS or rather aAPS) at the upper and lower boundary hypersurface $\Sigma_1:=\SET{t_1}\times \Sigma$ and $\Sigma_2:=\SET{t_2}\times\Sigma$ of the temporal compact manifold $M$. The main result of this paper is then stated as follows. 
\begin{theo}\label{fredDpmgaAPStwist}
Let $M$ be a temporal compact, globally hyperbolic spatial $\Gamma$-manifold with compact base $M_\Gamma$ and time domain $[t_1,t_2]$, $\spinb^{\pm}_{}(M)\rightarrow M$ the $\Gamma$-spin bundles of positive respectively negative chirality.
The $\Gamma$-invariant Dirac operators 
\begin{equation*}
D^{}_{\pm,\mathrm{APS}}\,:\,FE^0_{\Gamma,\mathrm{APS}}(M,[t_1,t_2],D^{}_{\pm})\,\rightarrow L^2(\spinb^{\mp}_{}(M))
\end{equation*}
and
\begin{equation*}
D^{}_{\pm,\mathrm{aAPS}}\,:\,FE^0_{\Gamma,\mathrm{aAPS}}(M,[t_1,t_2],D^{}_{\pm})\,\rightarrow L^2(\spinb^{\mp}_{}(M))
\end{equation*}
as lifts of Dirac operators on the base manifold $M_\Gamma$ are $\Gamma$-Fredholm.
\end{theo}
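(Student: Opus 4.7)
The plan is to reduce the $\Gamma$-Fredholmness of $D_{\pm,\mathrm{APS}}$ and $D_{\pm,\mathrm{aAPS}}$ to a computation of $\Gamma$-dimensions of certain subspaces of Cauchy data on $\Sigma_1$ and $\Sigma_2$, exploiting the $\Gamma$-equivariant well-posedness of the Cauchy problem established in \cite{OD1} together with the compactness of the base $\Sigma_\Gamma$.

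First I would use the wave evolution operator $Q(t_1,t_2)$, which by the results on $\Gamma$-invariance quoted earlier in the paper is a $\Gamma$-equivariant topological isomorphism between the finite-energy Cauchy-data spaces at $\Sigma_2$ and $\Sigma_1$. By well-posedness, any $u$ in the homogeneous kernel $\ker D_\pm$ in $FE^0_\Gamma$ is uniquely determined by $u|_{\Sigma_1}$ via $u(t,\cdot)=Q(t_1,t)u|_{\Sigma_1}$. The APS and aAPS conditions are encoded by the $\Gamma$-equivariant spectral projections $P_{\geq 0}(t_i)$, $P_{>0}(t_i)$, $P_{<0}(t_i)$, $P_{\leq 0}(t_i)$ of the spatial Dirac operators $A_{t_i}$ on $\Sigma_i$. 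Consequently
\[
\ker D_{\pm,\mathrm{APS}} \;\cong\; \mathrm{ran}\bigl(P_{<0}(t_1)\bigr) \,\cap\, Q(t_1,t_2)\,\mathrm{ran}\bigl(P_{\geq 0}(t_2)\bigr),
\]
and an analogous formula (with strict and non-strict inequalities interchanged) applies to $\ker D_{\pm,\mathrm{aAPS}}$. To bound its $\Gamma$-dimension I would exploit the strategy already sketched in the footnotes: the strict analogue $\mathrm{ran}(P_{<0}(t_1))\cap Q(t_1,t_2)\mathrm{ran}(P_{>0}(t_2))$ is $\Gamma$-isomorphic to $\mathrm{ran}(P_{<0})\cap\mathrm{ran}(P_{>0})=\mathrm{ran}(P_\emptyset)=\{0\}$, while the difference $P_{\geq 0}-P_{>0}=P_0$ projects onto $\ker A_{t_2}$. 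Since the base $\Sigma_\Gamma$ is closed compact, the induced spatial Dirac operator is a classical elliptic Fredholm operator with finite-dimensional kernel, so $\dim_\Gamma \ker A_{t_2}<\infty$; additivity of $\dim_\Gamma$ under this $\Gamma$-invariant finite-dimensional modification then gives $\dim_\Gamma \ker D_{\pm,\mathrm{APS}}\leq \dim_\Gamma \ker A_{t_2}<\infty$, and likewise in the aAPS case.

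For the cokernel I would identify the Hilbert-space adjoint of $D_{\pm,\mathrm{APS}}$ via a Green-type formula for the Lorentzian Dirac operator: the adjoint is the dual operator $D_\mp$ equipped with the complementary boundary conditions, which, after a Clifford-multiplication argument involving the temporal normal, turn out to be of aAPS type. The same spectral-projection/wave-evolution analysis then yields $\dim_\Gamma \ker(D_{\pm,\mathrm{APS}})^\ast<\infty$, whence $\dim_\Gamma\mathrm{coker}\,D_{\pm,\mathrm{APS}}<\infty$. Closedness of the range finally follows from the finite $\Gamma$-codimension of the range together with the closed-range/Breuer-Fredholm characterization in the $L^2$-von Neumann setting recalled in the preceding sections.

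The main obstacle I expect is the rigorous identification of the $L^2$-adjoint of $D_{\pm,\mathrm{APS}}$ with $D_\mp$ under the dual boundary conditions: one must show that the boundary spectral subspaces are exactly swapped by Clifford multiplication with the time normal, and that no spurious contribution arises from the zero eigenspaces $\ker A_{t_i}$, which is precisely where APS and aAPS disagree. A subtler secondary point is to verify that every intermediate subspace (the images and intersections of the spectral projections, their pullbacks under $Q(t_1,t_2)$) is a closed $\Gamma$-invariant Hilbert $\Gamma$-submodule, so that $\dim_\Gamma$ is well-defined and additive with respect to the orthogonal decomposition induced by $P_{\geq 0}=P_{>0}+P_0$.
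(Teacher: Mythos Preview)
Your proposal contains a genuine gap at its central step. You write that the ``strict analogue'' $\mathrm{ran}\bigl(P_{<0}(t_1)\bigr)\cap Q(t_1,t_2)\,\mathrm{ran}\bigl(P_{>0}(t_2)\bigr)$ is $\Gamma$-isomorphic to $\mathrm{ran}(P_{<0})\cap\mathrm{ran}(P_{>0})=\{0\}$. This is false: the wave evolution $Q$ does \emph{not} intertwine the spectral projections of $A_{t_1}$ and $A_{t_2}$, so $Q(t_1,t_2)\,\mathrm{ran}(P_{>0}(t_2))$ is in general a completely different subspace of $L^2(\spinb^{+}(\Sigma_1))$ from $\mathrm{ran}(P_{>0}(t_1))$. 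The footnote computation you seem to invoke concerns two spectral projections of the \emph{same} operator at the \emph{same} time, where the ranges are literally orthogonal; after transporting by $Q$ there is no such orthogonality. Indeed, if your argument worked, every kernel would be at most $\dim_\Gamma\ker A_{t_2}$-dimensional and the $\Gamma$-index would always vanish, contradicting the nontrivial index formulas the paper eventually derives.

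What is actually needed, and what the paper does, is a microlocal argument: one writes $Q$ as a $2\times 2$ matrix $(Q_{\pm\pm})$ with respect to the APS splittings, shows that the \emph{off-diagonal} entries $Q_{+-},Q_{-+}$ have vanishing principal symbol as Fourier integral operators (hence are of order $-1$), and then uses the compactness of the base $\Sigma_\Gamma$ to conclude they are $\Gamma$-compact. Unitarity of $Q$ (the relations $Q^\ast Q=\Iop{}=QQ^\ast$ written out entrywise) then exhibits $Q_{\pm\pm}^\ast$ as $\Gamma$-Fredholm parametrices for $Q_{\pm\pm}$, with $\Gamma$-compact remainders $Q_{\mp\pm}^\ast Q_{\mp\pm}$. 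Finally, a functional-analytic lemma (\Cref{funcanagammalemma}) transfers $\Gamma$-Fredholmness of $Q_{--}$ (resp.\ $Q_{++}$) to $\Gamma$-Fredholmness of $D_{\mathrm{APS}}$ (resp.\ $D_{\mathrm{aAPS}}$). Your identification of $\ker D_{\pm,\mathrm{APS}}$ with $\ker Q_{--}$ is correct and is part of that reduction, but the finiteness of $\dim_\Gamma\ker Q_{--}$ is precisely the hard analytic input you have not supplied; the adjoint/cokernel discussion you sketch would inherit the same gap.
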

The spaces $FE^0_{\Gamma,\mathrm{APS}}(M,[t_1,t_2],D^{}_{\pm})$ are modified spaces of finite energy spinors for the Dirac operator $D^{}_{+}$ respectively $D^{}_{-}$ (see \cite[Def.4.3]{OD1}) for which the local Sobolev spaces are replaced by $\Gamma$-Sobolev spaces and are equipped with APS boundary conditions; $FE^0_{\Gamma,\mathrm{aAPS}}(M,[t_1,t_2],D^{}_{\pm})$ is the corresponding space with respect to aAPS boundary conditions. $L^2(\spinb^{\pm}_{}(M))$ stands for $L^2([t_1,t_2],L^2(\spinb^{\pm}(\Sigma_\bullet)))$ where $L^2(\spinb^{\pm}_{}(\Sigma_\bullet))$ denotes square-integrable sections of $\spinb^{\pm}_{}(\Sigma_t)$ at each $t$ in the time domain with respect to a $\Gamma$-invariant density and $\Gamma$-invariant Hermitian sesquilinear bundle metric. These and the $\Gamma$-Sobolev spaces are examples of projective respectively free Hilbert $\Gamma$-modules in the von Neumann algebra of bounded $\Gamma$-invariant operators. The corresponding $\Gamma$-\textit{indices} of $D_{\pm,\mathrm{(a)APS}}$ can be related to each other. 
\begin{theo}\label{indexDpmgaAPStwist}
The $\Gamma$-indices in the situation of \Cref{fredDpmgaAPStwist} satisfy
\begin{equation}\label{indexDgaAPSformintro}
\Index_{\Gamma}(D^{}_{\pm,\mathrm{APS}})=-\Index_{\Gamma}(D^{}_{\pm,\mathrm{aAPS}})\quad.
\end{equation}
\end{theo}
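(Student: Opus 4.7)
The plan is to reduce equation~\eqref{indexDgaAPSformintro} to the general adjoint identity $\Index_\Gamma(T^*) = -\Index_\Gamma(T)$ for a $\Gamma$-Fredholm morphism $T$ of Hilbert $\Gamma$-modules, by identifying (after a $\Gamma$-equivariant chirality-swap) $D^{}_{\pm,\mathrm{aAPS}}$ as the Hilbert-space adjoint of $D^{}_{\pm,\mathrm{APS}}$. By \Cref{fredDpmgaAPStwist} the four operators $D^{}_{\pm,\mathrm{APS}}$ and $D^{}_{\pm,\mathrm{aAPS}}$ are $\Gamma$-Fredholm between the relevant Hilbert $\Gamma$-modules, so their $\Gamma$-indices are well-defined finite numbers and the adjoint identity is applicable in the Breuer--Atiyah calculus.

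The first main step is a Green's formula for the chiral Lorentzian Dirac operator on the temporal compact cobordism $M$: for $u$ in the APS finite-energy domain and $v$ smooth compactly supported,
\begin{equation*}
\langle D^{}_+ u,\,v\rangle_{L^2(\spinb^-(M))} - \langle u,\,D^{}_- v\rangle_{L^2(\spinb^+(M))}
= \sum_{i=1,2}\varepsilon_i\,\langle c(\nu_i)\,u|_{\Sigma_i},\,v|_{\Sigma_i}\rangle_{L^2(\Sigma_i)},
\end{equation*}
where $\nu_i$ is the future-directed unit timelike normal to $\Sigma_i$, $\varepsilon_i=\pm 1$ encodes the outward orientation, and $c(\nu_i)$ denotes Clifford multiplication. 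By density and the $\Gamma$-Sobolev trace theory developed in \cite{OD1}, the formula extends to the full finite-energy domain. Requiring the right-hand side to vanish against every APS-admissible $u$, and using that $c(\nu_i)$ anticommutes with the $\Gamma$-invariant spatial Dirac operator $A_i:=\Dirac_{\Sigma_i}$ and therefore interchanges the spectral projections $P_{\geq 0}(A_i)$ and $P_{\leq 0}(A_i)$, identifies the adjoint boundary condition with aAPS, yielding the relation $(D^{}_{\pm,\mathrm{APS}})^* = D^{}_{\mp,\mathrm{aAPS}}$.

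To pass from the chirality-swapped $D^{}_{\mp,\mathrm{aAPS}}$ on the right-hand side of the adjoint identity to the desired $D^{}_{\pm,\mathrm{aAPS}}$ in \eqref{indexDgaAPSformintro}, the second main step is to exhibit a $\Gamma$-equivariant bundle isomorphism $\Phi:\spinb^{\mp}(M)\to\spinb^{\pm}(M)$ that intertwines $D^{}_\mp$ with $\pm D^{}_\pm$ modulo $\Gamma$-compact perturbations and carries the aAPS finite-energy domain onto itself. A natural candidate is Clifford multiplication by the global $\Gamma$-invariant time vector field of the Lorentzian foliation. Since conjugation by such a $\Gamma$-equivariant isomorphism preserves the $\Gamma$-Fredholm index, one obtains $\Index_\Gamma(D^{}_{\mp,\mathrm{aAPS}}) = \Index_\Gamma(D^{}_{\pm,\mathrm{aAPS}})$, which together with the adjoint identity yields~\eqref{indexDgaAPSformintro} for both signs.

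The hard part will be the exact matching of the adjoint boundary condition with the aAPS prescription in the $\Gamma$-equivariant setting: the anticommutation $c(\nu_i)A_i=-A_i c(\nu_i)$ only interchanges $P_{\geq 0}(A_i)$ and $P_{\leq 0}(A_i)$ up to the $\Gamma$-finite-dimensional subspace $\ker A_i$, whose role in the domain specification must be tracked so that $FE^0_{\Gamma,\mathrm{aAPS}}(M,[t_1,t_2],D^{}_\pm)$ coincides with the adjoint domain on the nose, not merely modulo finite $\Gamma$-dimensional corrections. In addition, one has to verify that the chirality-swap isomorphism $\Phi$ is genuinely $\Gamma$-equivariant on the covering, carries the finite-energy $\Gamma$-Sobolev structure to itself, and preserves the APS/aAPS spectral projections; both checks rest on the $\Gamma$-invariant spinorial constructions already developed in the earlier sections of the paper.
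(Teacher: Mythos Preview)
Your strategy is genuinely different from the paper's, and it contains a real gap in the chirality-swap step.

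The paper never uses a Green's formula or the adjoint identity. Instead it reduces both $\Gamma$-indices to the diagonal spectral entries of the unitary wave evolution operator $Q(t_2,t_1)$: via Lemma~\ref{funcanagammalemma} one identifies $\Index_\Gamma(D_{\mathrm{APS}})=\Index_\Gamma(Q_{--})$ and $\Index_\Gamma(D_{\mathrm{aAPS}})=\Index_\Gamma(Q_{++})$ (Theorem~\ref{indexDgaAPStwist}), and then the purely algebraic unitarity relations \eqref{Qsys1}--\eqref{Qsys2} together with the kernel isomorphisms of Lemma~\ref{kernelisoQ} yield $\Index_\Gamma(Q_{++})=-\Index_\Gamma(Q_{--})$ (Theorem~\ref{propQposnegfred}). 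No relation between chiralities is needed; the argument runs for $D_+$ and $D_-$ separately.

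The issue with your route is the chirality swap. In the paper's conventions (see \eqref{gapsbc}--\eqref{gaapsbc} and the remark following them), the boundary conditions for negative chirality are \emph{defined} to be the adjoint boundary conditions for positive chirality; hence the Green's formula step gives $(D_{+,\mathrm{APS}})^\ast = D_{-,\mathrm{APS}}$, not $D_{-,\mathrm{aAPS}}$. Your Step~1 thus yields only the cross-chirality relation $\Index_\Gamma(D_{+,\mathrm{APS}})=-\Index_\Gamma(D_{-,\mathrm{APS}})$, which by itself says nothing about $D_{+,\mathrm{aAPS}}$. Your Step~2 is supposed to close the loop, but Clifford multiplication by the timelike normal $\upbeta$ anticommutes with $A_t$ and therefore interchanges the positive and negative spectral projections: it carries an aAPS domain for one chirality to an \emph{APS}-type domain for the other (up to the $\ker A_i$ contribution), not to the aAPS domain. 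So the asserted identity $\Index_\Gamma(D_{-,\mathrm{aAPS}})=\Index_\Gamma(D_{+,\mathrm{aAPS}})$ is not what conjugation by $\upbeta$ delivers. Worse, if both of your claimed identities held simultaneously, combining $(D_{+,\mathrm{aAPS}})^\ast = D_{-,\mathrm{aAPS}}$ with $\Index_\Gamma(D_{-,\mathrm{aAPS}})=\Index_\Gamma(D_{+,\mathrm{aAPS}})$ would force $\Index_\Gamma(D_{+,\mathrm{aAPS}})=0$, which is false in general. The missing ingredient---relating $D_{+,\mathrm{APS}}$ to $D_{+,\mathrm{aAPS}}$ within a single chirality---is precisely what the paper extracts from the unitarity of $Q$, and there is no shortcut via adjoints and $\upbeta$-conjugation alone.
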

These two statements have been proven as Theorem 7.6 and Theorem 7.9 in \cite{OD}. We recall the proof from \cite{OD} and fix some issues. \Cref{fredDpmgaAPStwist} is shown as \Cref{indexDgaAPStwist} and \Cref{indexDneggaAPS}. \clef{indexDgaAPSformintro} follows in the progress of the proof where we are going to relate the question of $\Gamma$-Fredholmness to the one of certain spectral parts of the corresponding Dirac-wave evolution operator, which enables a connection to the well-posedness result.\\
\\
After a brief recapitulation of the important results from \cite{OD1} in \Cref{chap:Recap}, we introduce the technical framework of (spatial) Galois coverings which leads to (spatial) $\Gamma$-manifolds, von Neumann algebras associated to $\Gamma$, Hilbert $\Gamma$-modules, and $\Gamma$-Fredholm theory in \Cref{chap:Galois}. We clarify in \Cref{chap:gammasob} how the von Neumann setting is realised for Galois coverings of our interest. As we will need it in the following proof, which is heavy influenced from the original treatment in \cite{BaerStroh}, we clarify a $\Gamma$-version of Seeleys theorem for complex powers of $\Gamma$-(pseudo-)differential operators. We collect in \Cref{chap:setting} all made assumptions which we are going to use in this and the following paper. We start the proof in \Cref{chap:wellposgamma} by specifying the results from \Cref{chap:Recap} to the $\Gamma$-setting. These well-posedness results imply the existence of $\Gamma$-invariant wave evolution operators. \Cref{chap:gammaevol} is focused on the decomposition of the wave evolution operators for $D^{}_{\pm}$ with respect to the orthogonal splitting of $L^2$-spaces, induced by the chosen boundary conditions. We investigate how unitarity, hence $\Gamma$-Fredholmness, and the Fourier integral operator character carry over to these spectral parts. We show in \Cref{chap:Atiyah} how the $\Gamma$-Fredholmness of these spectral parts implies the $\Gamma$-Fredholmness of the Dirac operators of interest. We recall some facts about manifolds of bounded geometry in \Cref{chap:manbound} which will be needed for the $\Gamma$-version of Seeley's theorem.\\
\\
We will calculate the $\Gamma$-indices in the third paper of this series where we will introduce the notion of spectral flow in $\Gamma$-setting.   

\section{Recapitulation of paper I}\label{chap:Recap}

Concentrating on the untwisted Dirac operator, we have shown in the setting of \cite{OD1} that the inhomogeneous Cauchy problem
\begin{equation}\label{Diraceq}
D_{\pm} u = f\quad\text{with}\quad u\vert_{\Sigma_t}=g
\end{equation}
at initial time $t$ in a time domain $\timef(M)$ for initial values $g\in H^s_\comp(\spinb^{\pm}_{}(\Sigma_t))$ and inhomogeneities $f\in L^2_{\loc,\scomp}(\timef(M),H^s_\loc(\spinb^{\mp}_{}(\Sigma_\bullet)))$ is well-posed such that the solutions $u$ are in $FE^s_\scomp(M,\timef,D^{}_{\pm})$. The latter two spaces have been defined in \cite[Sec.2.3/4.1]{OD1}. The well posedness results have been stated with the restriction map $\mathsf{res}_t: M \rightarrow \Sigma_t$ as follows.
\begin{theo}[cf. Theorem 4.9 in \cite{OD1}]\label{inivpwell}
For a fixed $t \in \timef(M)$ and $s \in \R$ the maps 
\begin{equation}\label{inivpmap}
\mathsf{res}_t \oplus D^{}_{\pm} \,\,:\,\, FE^s_\scomp(M,\timef,D^{}_{\pm}) \,\,\rightarrow\,\, H^s_\comp(\spinb^{\pm}_{}(\Sigma_t))\oplus L^2_{\loc,\scomp}(\timef(M),H^s_\loc(\spinb^{\mp}_{}(\Sigma_\bullet)))\, 
\end{equation}
are isomorphisms of topological vector spaces.
\end{theo}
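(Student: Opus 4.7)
The plan is to establish the bijection of \clef{inivpmap} by a classical energy-method argument for symmetric hyperbolic systems, together with a careful bookkeeping of the support conditions encoded in the subscript $\scomp$ (spatially compact support) and in the local Sobolev spaces. Continuity of $\mathsf{res}_t\oplus D^{}_{\pm}$ is immediate from the definition of the finite-energy space $FE^s_\scomp(M,\timef,D^{}_{\pm})$: the restriction to a Cauchy slice is by construction continuous into $H^s_\comp(\spinb^{\pm}_{}(\Sigma_t))$, and $D^{}_{\pm}$ preserves the spatially compact support and loses one derivative in the correct way so that it lands in $L^2_{\loc,\scomp}(\timef(M),H^s_\loc(\spinb^{\mp}_{}(\Sigma_\bullet)))$. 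Since all three spaces are (LF- or Fréchet-type) topological vector spaces, continuity of the inverse can be deduced from an open mapping-type argument once bijectivity and an appropriate energy estimate are in hand.

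For \textbf{injectivity}, I would use the standard energy method. The Dirac operator $D^{}_{\pm}$ is a first-order, formally symmetric hyperbolic operator when written in the $3{+}1$ splitting adapted to the Cauchy foliation $\{\Sigma_t\}$. Contracting the equation $D^{}_{\pm}u=0$ with $u$ against the past-directed normal $\upnu$ (cf.\ footnote f2) and integrating over the lens-shaped region bounded by two Cauchy slices, one obtains a Grönwall inequality of the form
\begin{equation*}
\|u\vert_{\Sigma_{t'}}\|_{H^s(\spinb^{\pm}(\Sigma_{t'}))}^2 \le C(t,t')\,\bigl(\|u\vert_{\Sigma_t}\|_{H^s(\spinb^{\pm}(\Sigma_{t}))}^2+\|D^{}_{\pm}u\|_{L^2([t,t'],H^s)}^2\bigr).
\end{equation*}
Vanishing initial data and vanishing right-hand side force $u\equiv 0$, giving uniqueness. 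For higher $s$ one commutes $D^{}_{\pm}$ with a family of elliptic spatial operators on the slices and iterates the same estimate; the finite propagation speed characteristic of Dirac-type operators guarantees that spatial compactness of the data is propagated to spatial compactness of the solution on each $\Sigma_{t'}$.

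For \textbf{existence}, I would solve the Cauchy problem in two steps. First I would reduce to the homogeneous problem $D^{}_{\pm}u=0$, $u\vert_{\Sigma_t}=g$ by Duhamel's principle: given a source $f\in L^2_{\loc,\scomp}(\timef(M),H^s_\loc)$, a solution to $D^{}_{\pm}u=f$ with zero initial data can be assembled by integrating the homogeneous solution operator against $f$, with the time integration harmless because $f$ is time-locally $L^2$ and spatially compact on each slice. For the homogeneous problem I would construct the solution from the data $g\in H^s_\comp(\spinb^{\pm}(\Sigma_t))$ by a Hahn--Banach/duality argument applied to the formal adjoint on test sections, using precisely the energy estimate above as the a priori bound. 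Alternatively one can build it by a partition-of-unity/localisation argument: on small spacetime neighbourhoods one obtains a local solution via the theory of symmetric hyperbolic systems, and global hyperbolicity plus finite propagation speed let one patch these together. The spatial compactness condition on the support is maintained throughout because the influence region of a compact set $K\subset\Sigma_t$ intersects every other Cauchy slice in a compact set.

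The \textbf{main obstacle} I anticipate is matching the functional-analytic framework to the support conditions, rather than the underlying PDE facts. One has to check that the spatially compact/locally Sobolev spaces are invariant under the solution operator, that the topologies are compatible (inductive limits over spatial support versus Fréchet-type limits in time), and that the energy estimate upgrades from pointwise-in-time to a genuine bound on the $FE^s_\scomp$-norm so that the open mapping theorem can be invoked cleanly. In particular, handling negative or non-integer $s$ requires the $\Psi$DO-calculus on $\Sigma_t$ (footnote f4) and a duality argument to define $D^{}_{\pm}$ as a continuous map between the correct distributional spaces; once that is set up, the above energy-based program applies uniformly in $s\in\R$ and yields the claimed isomorphism.
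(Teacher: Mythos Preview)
Your plan is essentially the approach taken in \cite{OD1} (Theorem~4.9 there), which this paper only recapitulates without reproving; the $\Gamma$-adapted version in \Cref{inivpwellgammatempcomp} mirrors exactly the structure you describe --- continuity of $\mathsf{res}_t$ and $D^{}_{\pm}$ from the definition of the finite-energy space, an $s$-energy estimate leading to a Gr\"onwall-type inequality (cf.\ \clef{enestgamma}), and the resulting a~priori bound that closes the open-mapping argument. Your identification of the bookkeeping of spatially compact supports and the use of a spatial $\Psi$DO family $\Lambda^s_t$ to commute through the energy estimate for arbitrary $s\in\R$ matches the mechanism visible in the proof of \Cref{inivpwellgammatempcomp}, so nothing substantive is missing.
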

The corresponding homogeneous Cauchy problems are also well-posed.
\begin{cor}[cf. Corollary 4.10 in \cite{OD1}]\label{homivpwell}
For a fixed $t \in \timef(M)$ and $s \in \R$ the map
\begin{equation*}
\mathsf{res}_t  \,\,:\,\, FE^s_\scomp\left(M,\timef,\kernel{D^{}_{\pm}}\right) \,\,\rightarrow\,\, H^s_\comp(\spinb^{\pm}_{}(\Sigma_t))
\end{equation*}
are isomorphisms of topological vector spaces.
\end{cor}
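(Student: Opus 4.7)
The plan is to obtain this as an immediate corollary of \Cref{inivpwell} by restricting the isomorphism there to spinors lying in the kernel of $D^{}_{\pm}$. First I would observe that, by the very definition of the finite-energy kernel space, one has the identification
\begin{equation*}
FE^s_\scomp(M,\timef,\kernel{D^{}_{\pm}}) = (\mathsf{res}_t \oplus D^{}_{\pm})^{-1}\bigl(H^s_\comp(\spinb^{\pm}_{}(\Sigma_t)) \oplus \SET{0}\bigr)
\end{equation*}
as subsets of $FE^s_\scomp(M,\timef,D^{}_{\pm})$. Since \Cref{inivpwell} provides a topological isomorphism onto the full direct sum, its restriction to the preimage of the closed subspace $H^s_\comp(\spinb^{\pm}_{}(\Sigma_t)) \oplus \SET{0}$ is a topological isomorphism onto that subspace.

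Next I would note that on the restricted domain the second component $D^{}_{\pm} u$ vanishes identically, so the map $\mathsf{res}_t \oplus D^{}_{\pm}$ collapses to $\mathsf{res}_t$ alone. Under the canonical identification $H^s_\comp(\spinb^{\pm}_{}(\Sigma_t)) \oplus \SET{0} \cong H^s_\comp(\spinb^{\pm}_{}(\Sigma_t))$, this yields precisely the claimed topological isomorphism. Continuity in both directions is inherited from the ambient isomorphism: the inverse is given by $g \mapsto (\mathsf{res}_t \oplus D^{}_{\pm})^{-1}(g,0)$, which is continuous as a composition of continuous linear maps.

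I expect no substantial obstacle here. The only points worth verifying carefully are, first, that the subspace topology on $FE^s_\scomp(M,\timef,\kernel{D^{}_{\pm}})$ inherited from $FE^s_\scomp(M,\timef,D^{}_{\pm})$ agrees with the intrinsic topology used in \cite[Def.4.3]{OD1}, and second, that $\SET{0}$ is indeed a closed subspace of the target so that the preimage is closed and inherits a well-defined Fréchet/LF-topology. Both are automatic from the definitions recalled in \Cref{chap:Recap}, so the corollary reduces to a direct reading of \Cref{inivpwell}.
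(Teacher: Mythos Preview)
Your proposal is correct and is precisely the standard derivation: the corollary is an immediate restriction of the isomorphism in \Cref{inivpwell} to the preimage of the closed subspace $H^s_\comp(\spinb^{\pm}_{}(\Sigma_t))\oplus\{0\}$, on which the second component vanishes and the map collapses to $\mathsf{res}_t$. The paper itself does not reprove this statement here---it is recalled from \cite{OD1}---and the $\Gamma$-analogue in \Cref{homivpwellgammatempcomp} is handled by the same one-line reduction, so your argument matches the intended approach.
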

These results imply the existence of wave evolution operators for positive and negative chirality:
\begin{eqnarray*}
Q^{}(t_2,t_1):=\rest{t_2}\circ(\rest{t_1})^{-1} &:& H^s_\comp(\spinb^{+}_{}(\Sigma_{1}))\,\,\rightarrow\,\,H^s_\comp(\spinb^{+}_{}(\Sigma_{2}))\,\, , \\
\tilde{Q}^{}(t_2,t_1):=\rest{t_2}\circ(\rest{t_1})^{-1}&:&H^s_\comp(\spinb^{-}_{}(\Sigma_{1}))\,\,\rightarrow\,\,H^s_\comp(\spinb^{-}_{}(\Sigma_{2}))\,\, . 
\end{eqnarray*}
These are isomorphisms of topological vector spaces which satisfy the properties of ordinary evolution operators. They are continuous in both time arguments and furthermore unitary for $s=0$ if one considers any fixed bounded time interval (see \cite[Lem.5.2]{OD1}). The most important property is their character as properly supported Fourier integral operator of order zero with canonical relation $\mathsf{C}_{1\rightarrow 2}$, given as graph of a canonical transformation, induced from the lightlike (co-)geodesic flow from $\Sigma_1$ to $\Sigma_2$.
\begin{theo}[cf. Theorem 5.4 in \cite{OD1}]\label{Qfourier} 
For all $s \in \R$ the operators $Q$ and $\tilde{Q}$ satisfy
\begin{eqnarray*}
&&Q(t_2,t_1)\in \FIO{0}_{\mathsf{prop}}(\Sigma_{1},\Sigma_{2};\mathsf{C}'_{1\rightarrow 2};\Hom(\spinb^{+}(\Sigma_1),\spinb^{+}(\Sigma_2)))\\
&& \tilde{Q}(t_2,t_1)\in \FIO{0}_{\mathsf{prop}}(\Sigma_{1},\Sigma_{2};\mathsf{C}'_{1\rightarrow 2};\Hom(\spinb^{-}(\Sigma_1),\spinb^{-}(\Sigma_2)))
\end{eqnarray*}
for any fixed time interval $[t_1,t_2]\subset \timef(M)$ and with canonical graphs 
\begin{equation}\label{Qcanrel}
\begin{split}
\mathsf{C}_{1\rightarrow 2} &=\mathsf{C}(\inclus^\ast)\circ\mathcal{C}=\mathsf{C}_{1\rightarrow 2\vert +}\sqcup \mathsf{C}_{1\rightarrow 2\vert -} \quad\text{where}\\
\mathsf{C}_{1\rightarrow 2\vert \pm} &=\SET{((x_{\pm},\xi_{\pm}),(y,\eta))\in \dot{T}^\ast \Sigma_2\times \dot{T}^\ast \Sigma_1\,\Big\vert\, (x_{\pm},\xi_{\pm})\sim (y,\eta)}  
\end{split}
\end{equation} 
with respect to the lightlike (co-)geodesic flow as canonical relation; their principal symbols are
\begin{equation}
{\sigma}_{0}(Q)(x,\xi_{\pm};y,\eta)=\pm\frac{\norm{\eta}{\met_{t_1}(y)}^{-1}}{2}\left(\mp\norm{\xi_{\pm}}{\met_{t_2}(x)}\upbeta+\Cliff{t_2}{\xi^{\sharp}_{\pm}}\right)\circ \mathpzc{P}^{\spinb(M)}_{(x,\varsigma_{\pm})\leftarrow (y,\zeta_{\pm})} \circ \upbeta\label{Qprincsymbol}
\end{equation}
and
\begin{equation}
{\sigma}_{0}(\tilde{Q})(x,\xi_{\pm};y,\eta)=\pm\frac{\norm{\eta}{\met_{t_1}(y)}^{-1}}{2}\left(\mp\norm{\xi_{\pm}}{\met_{t_2}(x)}\upbeta-\Cliff{t_2}{\xi^{\sharp}_{\pm}}\right)\circ \mathpzc{P}^{\spinb(M)}_{(x,\varsigma_{\pm})\leftarrow (y,\zeta_{\pm})} \circ \upbeta \label{Qnegprincsymbol}
\end{equation}
where $(y,\zeta_{\pm})\in T^\ast_{\Sigma_{1}}M$ and $(x,\varsigma_{\pm})\in T^\ast_{\Sigma_{2}}M$ restrict to $(y,\eta)$ and $(x,\xi_{\pm})$ respectively.
\end{theo}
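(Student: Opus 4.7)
My plan is to reduce the statement to the classical microlocal analysis of propagators for first-order symmetric hyperbolic systems, following the Duistermaat--Hörmander template, and then to extract the principal symbols in \clef{Qprincsymbol}--\clef{Qnegprincsymbol} from a WKB parametrix adapted to the chirality decomposition.

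First I would use the splitting $M \cong [t_1,t_2] \times \Sigma$ provided by global hyperbolicity to recast $D_{\pm} u = 0$ as a first-order evolution equation $(\partial_t + \mathcal{A}_\pm(t)) u = 0$ on $\Sigma_t$, where $\mathcal{A}_\pm(t)$ is obtained by inverting the Clifford action of the timelike unit normal against the tangential part of the Dirac operator. The principal symbol of $\mathcal{A}_\pm(t)$ at $(x,\xi) \in \dot{T}^\ast \Sigma_t$ squares to $\norm{\xi}{\met_t(x)}^2 \mathbbm{1}$, so it diagonalises into two spectral subbundles with eigenvalues $\pm\norm{\xi}{\met_t(x)}$, corresponding to the two sheets of the lightcone in $T^\ast M$. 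By the general theory of Fourier integral propagators for such strictly hyperbolic systems, $Q(t_2,t_1)$ is then a properly supported FIO of order zero whose canonical relation is the Hamiltonian flow of these eigenvalues; identifying this flow with the restricted null (co-)geodesic flow of $(M,\met)$ produces exactly the canonical graph $\mathsf{C}_{1 \rightarrow 2} = \mathsf{C}_{1 \rightarrow 2 \vert +} \sqcup \mathsf{C}_{1 \rightarrow 2 \vert -}$ of \clef{Qcanrel}. Proper support follows from the finite propagation speed of null geodesics over a compact time interval in a globally hyperbolic manifold, combined with standard conic cutoffs in the WKB construction.

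Next, to obtain the principal symbols I would build a parametrix of WKB type: the phase function satisfies the eikonal equation (and hence generates the null geodesic flow as bicharacteristic flow), and the leading amplitude satisfies a first-order spinor transport equation along the bicharacteristics whose solution is the spinor parallel transport $\mathpzc{P}^{\spinb(M)}_{(x,\varsigma_\pm)\leftarrow(y,\zeta_\pm)}$, composed with the spectral projector of $\mathcal{A}_{\pm}(t_2)$ onto its $\pm$-eigenspace. Up to the normalisation $\tfrac{1}{2}\norm{\eta}{\met_{t_1}(y)}^{-1}$, which comes from the symplectic change of variables between $T^\ast M$-half-densities and $T^\ast \Sigma_1$-half-densities, this projector equals $\pm\tfrac{1}{2}\bigl(\mp\norm{\xi_\pm}{\met_{t_2}(x)}\upbeta + \Cliff{t_2}{\xi_\pm^\sharp}\bigr)$ for $Q$ and its sign-flipped tangential version for $\tilde{Q}$; the rightmost $\upbeta$ translates the initial Cauchy datum into the appropriate chirality component of the evolution system, producing precisely \clef{Qprincsymbol} and \clef{Qnegprincsymbol}.

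The main obstacle, and the reason these computations are delicate even though the abstract FIO structure is standard, lies in the careful bookkeeping of the chirality involution $\upbeta$, the two lightcone sheets labelled by $\pm$, and the direction of spinor parallel transport, so that the factors in \clef{Qprincsymbol}--\clef{Qnegprincsymbol} emerge with exactly the stated signs. The FIO character, the order-zero property and the structure of the canonical relation itself are comparatively formal consequences of the Duistermaat--Hörmander machinery once the reduction to a strictly hyperbolic first-order system on $\Sigma_t$ is in place.
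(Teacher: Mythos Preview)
The present paper does not contain a proof of this theorem: it is stated in \Cref{chap:Recap} as a recapitulation of \cite[Thm.~5.4]{OD1}, and the paper explicitly defers all details and notations to Section~5 and Appendix~A of \cite{OD1}. So there is no in-paper proof to compare your proposal against.

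That said, your outline is the standard and correct route to such a result, and it is consistent with how the paper uses \Cref{Qfourier} downstream (e.g.\ in \Cref{propQposneg} and \Cref{propQposnegfredimprov}, where the symbol calculus for the spectral entries relies precisely on the multiplicativity of principal symbols along the canonical relation \clef{Qcanrel} and on the explicit form \clef{Qprincsymbol}--\clef{Qnegprincsymbol}). The reduction to a first-order strictly hyperbolic system on $\Sigma_t$, the identification of the bicharacteristic flow with the restricted null (co-)geodesic flow, and the transport-equation derivation of the spinor parallel transport in the leading amplitude are exactly the ingredients one expects the original proof in \cite{OD1} to use; your remark that the delicate part is the sign bookkeeping for $\upbeta$, the two lightcone sheets, and the half-density normalisation is well taken and matches the level of care visible in the stated formulas.
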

$\mathpzc{P}^{\spinb(M)}_{(x,\varsigma_{\pm})\leftarrow (y,\zeta_{\pm})}$ denotes the parallel transport from $(y,\zeta_{\pm})$ to $(x,\varsigma_{\pm})$ with respect to the spinorial covariant derivative, $\sharp$ the sharp-isomorphism and $\norm{\bullet}{\met_{t}}$ a norm for covectors, induced by the dual metric of $\met_t$ for fixed $t$, and $T^\ast_{\Sigma_{t}}M:=T^\ast_pM$ for $p \in \Sigma_t$. $\FIO{0}_{\mathsf{prop}}$ denotes the space of properly supported Fourier integral operators of order $0$. We refer to Section 5 and Appendix A of \cite{OD1} for details and notations. The properly supportness allows to extend these operators as maps between $H^s_\loc$-spaces for any $s\in \R$.

\section{Geometry and functional analysis in $\Gamma$-setting}\label{chap:Galois}

This section relies heavily on \cite[Chap.2/3]{shub} which is supported with additional material from \cite{vaill} as well as \cite{schickl2}. We recall the basic important concepts of functional algebra and analysis on Galois coverings with respect to a group $\Gamma$. The first part deals with topological properties of Galois coverings with a special view on pseudo-Riemannian manifolds. The main objects are \textit{von Neumann algebras} and Hilbert modules with respect to $\Gamma$. We introduce the basic background of these concepts and how Fredholm theory is implemented.

\subsection{Galois Coverings and $\Gamma$-manifolds}\label{chap:galois-sec:manifold}

Let $M$ be any connected manifold. We denote with $\Gamma$ a discrete group which acts freely, freely discontinuous and cocompactly on a non-compact manifold $M$ by diffeomorphisms; we denote with $\upepsilon$ the neutral element. The quotient map $M\rightarrow M/\Gamma$ then becomes a Galois covering with compact base manifold $M/\Gamma$ and $\Gamma$ acts as deck transformation. We will also use the term $\Gamma$-\textit{manifold} and write $M_\Gamma$ for $M/\Gamma$. Examples of such manifolds $M$ are universal coverings of compact manifolds.\\
\\
If the $\Gamma$-manifold $M$ is equipped with a smooth and complex vector bundle $E$ with projection $\pi\,:\,E\rightarrow\,M$, one can define an action of $\Gamma$ on the vector bundle $E$ through the map $\pi_\Gamma\vert_p\,:\,E_p\,\rightarrow\,E_{\gamma p}$ for all $p \in M, \gamma \in \Gamma$. We presuppose that this map is a linear isomorphism, such that the action on $E$ covers the action on $M$, and that the projection $\pi$ is $\Gamma$-\textit{equivariant}, i.e. $\pi(\gamma p,v_{\gamma p})=\gamma \pi(p,v_p)$ for all $p,\gamma$, and $v_p$ a vector at $p$. Like in our main reference \cite{shub}, we call $E$ a $\Gamma$-\textit{vector bundle} if it is a vector bundle over a $\Gamma$-manifold such that $E$ is the pullback bundle of a vector bundle $E_\Gamma:=E/\Gamma$ over the compact base $M_\Gamma$ with respect to the covering map. We can furthermore extend $\pi_\Gamma\vert_p$ to an isometry by introducing an inner product $\idscal{1}{E_{p}}{\cdot}{\cdot}$ on each fibre $E_p$ for $p \in M$ which is either already given on $E$ or is induced as pullback of a bundle metric on $E/\Gamma$. In order to extend $\pi_\Gamma$ to an isometry, we have to require that the bundle metric is $\Gamma$-invariant:
\begin{equation}\label{gammabundlemetinv}
\idscal{1}{E_{\gamma p}}{\cdot}{\cdot}=\idscal{1}{E_{p}}{\cdot}{\cdot}\quad \forall\,\gamma\in\Gamma\,,\, p\in M\quad.
\end{equation} 
We also equip $M$ with a positive, $\Gamma$-invariant smooth density $\differ \mu$:
\begin{equation}\label{gammadenseinv}
\differ \mu (\gamma p)=\differ \mu(p) \quad\forall\,\gamma\in\Gamma\,,\, p\in M\quad.
\end{equation} 
This can be either directly obtained by lifting a positive smooth density from $M_\Gamma$ or with a metric on $T(M_\Gamma)$ which induces a $\Gamma$-invariant metric on $TM$ and thus a $\Gamma$-invariant smooth density on $M$.\\
\\
A fundamental domain $\mathcal{F}$ is an open subset in $M$ such that 
\begin{align*}
\text{(a)} & \quad(\gamma_1\mathcal{F})\cap(\gamma_2\mathcal{F})=\emptyset\quad\forall\,\gamma_1,\gamma_2\in \Gamma \quad\Rightarrow\quad \gamma_1\neq\gamma_2& \\
\text{(b)} & \quad M=\bigcup_{\gamma \in \Gamma}\gamma\left(\overline{\mathcal{F}}\right) &\text{(c)} \quad \overline{\mathcal{F}}\setminus\mathcal{F}\quad\text{is a null set.}
\end{align*}
Moreover, these properties imply that $M$ can be viewed as principal $\Gamma$-bundle, i.e. $M \simeq \Gamma \times \mathcal{F}$, and $\mathcal{F}$ as representative for the group action.\\
\\
The compactness of the orbit space allows to define a $\Gamma$-invariant locally finite open covering and a $\Gamma$-invariant partition of union subordinated to this open covering. Since $M_\Gamma$ is compact, one can take a finite covering of the base by open balls $\mathring{\mathbb{B}}_j$, $j \in J$ finite index set. These open balls can be lifted to $M$ which defines an infinite, but countable covering for $M$ due to the countability of $\Gamma$:
\begin{equation}\label{gammainvcover}
M=\bigcup_{\substack{j \in J \\ \gamma \in \Gamma}}\gamma(\mathring{\mathbb{B}}_j) \quad .
\end{equation}
A subordinated partition of unity $(\gamma \mathring{\mathbb{B}}_j, \phi_{j,\gamma})_{\substack{j \in J\\\gamma \in \Gamma}}$ can be constructed by taking compactly supported functions $\phi_{j,\gamma}\in C^\infty_\comp(\gamma \mathring{\mathbb{B}}_j,\R_{\geq 0})$ which satisfy $\phi_{j,\gamma}(p)=\phi_{j,\upepsilon}(\gamma^{-1}p)$ as well as
\begin{equation}\label{gammainvpartition}
\sum_{\substack{j \in J \\\gamma \in \Gamma}} \phi_{j,\gamma}=1 \quad .
\end{equation}
\begin{rem}\label{remfun}
\begin{itemize}
\item[]
\item[(i)]If we compare property (b) in the characterisation of the fundamental domain with \clef{gammainvcover}, we observe that the closure $\overline{\mathcal{F}}$ is the compact orbit space $M_\Gamma$. Thus, $\mathcal{F}$ is a dense subspace in $M_\Gamma$.
\item[(ii)] By comparing \clef{gammainvcover} with \clef{boundgeomcov}, we observe that $\Gamma$-manifolds are indeed examples of manifolds of bounded geometry. The radii-condition on the functions for the partition of unity in \Cref{manboundgeopartun} is replaced with the assumption on $\Gamma$ to act freely discontinuous.
\end{itemize}
\end{rem}

We have to require for pseudo-Riemannian manifolds that the covering map is a local isometry in order to obtain a pseudo-Riemannian metric on the covering via pullback with the same signature. If $M$ is simply connected, time- and space orientability are also preserved under pullback. Hence we don't need to impose further modifications than the assumption that the covering maps are local isometries. We refer to \cite[Sec.2.3]{baum1981spin} and \cite[p.191]{ONeill} for more informations. The preservation of the global hyperbolicity property under covering maps has been proven in \cite[Lem.4.1.]{garhar} if one assumes that the base of the covering is globally hyperbolic. Thus, the covering of Cauchy hypersurfaces are again Cauchy hypersurfaces.\\
\\
We are going to focus on the following situation: let $\Sigma_\Gamma$ be a spatial Cauchy hypersurface, such that $M_\Gamma=\timef\times\Sigma_\Gamma$ is globally hyperbolic with time domain $\timef$, and suppose that $\Sigma$ is its Galois covering with respect to $\Gamma$. We denote with $M=\timef\times \Sigma$ the covering in spatial direction which is again a globally hyperbolic manifold. We call $M$ \textit{spatial $\Gamma$-manifold} as $\Gamma$ only acts on the spatial part of the spacetime.

\subsection{Von Neumann algebra associated to a $\Gamma$-action}\label{chap:galois-sec:vneumann}

Let $\mathcal{H}$, $\mathcal{H}_1$, $\mathcal{H}_2$ be Hilbert spaces and denote with $\mathscr{B}(\mathcal{H}_1,\mathcal{H}_2)$  the set of bounded linear operators $A:\mathcal{H}_1\rightarrow \mathcal{H}_2$; we write $\mathscr{B}(\mathcal{H})$ for $\mathcal{H}_{1}=\mathcal{H}=\mathcal{H}_2$. The space $\mathscr{B}(\mathcal{H})$ with the operator norm is the most prominent example of a unital $C^\ast$-algebra with identity $\Iop{\mathcal{H}}$, composition as multiplication and involution is given by adjoining the operator. The \textit{commutant} of a subspace $\mathscr{N}\subset \mathscr{B}(\mathcal{H})$ is 
\begin{equation}\label{commutant}
\mathscr{N}':=\SET{A\in \mathscr{B}(\mathcal{H})\,\vert\, AB=BA\quad\forall\,B\in \mathscr{N}} \quad.
\end{equation}
It is a closed and unital subalgebra of $\mathscr{B}(\mathcal{H})$. A subalgebra $\mathscr{N}\subset\mathscr{B}(\mathcal{H})$ with $\Iop{\mathcal{H}}\in \mathscr{N}$ and $A^\ast \in \mathscr{N}$ for all $A\in \mathscr{N}$ is a \textit{von Neumann algebra} if $(\mathscr{N}')'=\mathscr{N}$. 
A von Neumann algebra $\mathscr{N}$ is a \textit{factor} if
$$ \mathscr{N}\cap\mathscr{N}'=\SET{B\in \mathscr{N}\,\vert\, AB=BA\quad\forall\, A\in \mathscr{N}}=\SET{\lambda \Iop{\mathcal{H}}\,\vert\, \lambda \in \C}\quad. $$
We now want to introduce a von Neumann algebra which we will use in the $\Gamma$-setting. Let $u,v$ be two functions $\Gamma\rightarrow \C$; we define an inner product
\begin{equation}\label{l2gammainnerprod}
\dscal{1}{\ell^2(\Gamma)}{u}{v}:=\sum_{g\in\Gamma}\overline{u(g)}v(g)  
\end{equation}
and a corresponding Hilbert space of $l^2$-functions 
\begin{equation}\label{l2gamma}
\ell^2(\Gamma):=\SET{u\,:\,\Gamma\,\rightarrow\,\C\,\Big\vert\,\dscal{1}{\ell^2(\Gamma)}{u}{u} <\infty} \quad. 
\end{equation}
An orthonormal basis in $\ell^2(\Gamma)$ is $\SET{\delta_\gamma}_{\gamma \in \Gamma}$ which satisfies $\delta_{\gamma_1}(\gamma_2)=1$ if $\gamma_1=\gamma_2$ and zero otherwise. The group action $\Gamma$ induces \textit{left and right translation operators} on $\ell^2(\Gamma)$ which acts on this basis as
\begin{equation}\label{leftrightonONB}
l_{\gamma_1}\delta_{\gamma_2}=\delta_{\gamma_1\gamma_2}\quad \text{and}\quad r_{\gamma_1}\delta_{\gamma_2}=\delta_{\gamma_2\gamma_1^{-1}} 
\end{equation}
for $\gamma_1,\gamma_2 \in \Gamma$. These are unitary operators such that $\gamma\,\mapsto\,l_\gamma$ and $\gamma\,\mapsto\,r_\gamma$ become unitary representations of the discrete group in \clef{l2gamma}. We denote with $\mathscr{N}_r(\Gamma)$ the smallest von Neumann algebra in $\mathscr{B}(\ell^2(\Gamma))$ which contains the set $\SET{r_\gamma\,\vert\,\gamma \in \Gamma}$ and $\mathscr{N}_l(\Gamma)$ as the smallest von Neumann algebra which contains $\SET{l_\gamma\,\vert\,\gamma \in \Gamma}$. It is proven in \cite[Thm.2.10]{shub} that the von Neumann algebras $\mathscr{N}_l(\Gamma)$ and $\mathscr{N}_r(\Gamma)$ take the form 
\begin{equation}\label{vNalr}
\begin{split}
\mathscr{N}_l(\Gamma) &=\SET{A\in \mathscr{B}(\ell^2(\Gamma))\,\vert\, Ar_\gamma=r_\gamma A\quad\forall\,a,b,\Gamma \in \Gamma} \\
\mathscr{N}_r(\Gamma) &=\SET{A\in \mathscr{B}(\ell^2(\Gamma))\,\vert\, Al_\gamma=l_\gamma A\quad\forall\,a,b,\Gamma \in \Gamma} \quad,
\end{split}
\end{equation}
hence $(\mathscr{N}_l(\Gamma))'=\mathscr{N}_r(\Gamma)$ respectively $(\mathscr{N}_r(\Gamma))'=\mathscr{N}_l(\Gamma)$, and that these are factors if and only if all conjugacy classes in $\Gamma/\SET{\upepsilon}$ are infinite. Those groups $\Gamma$, for which this is true, are called \textit{i.c.c-groups} (infinite-conjugacy-class). One can define for an operator $A$ in either $\mathscr{N}_l(\Gamma)$ or $\mathscr{N}_l(\Gamma)$ a \textit{trace} via
\begin{equation}\label{tracevN}
\uptau_\Gamma(A):=\dscal{1}{\ell^2(\Gamma)}{A \delta_{\gamma}}{\delta_{\gamma}}=\dscal{1}{\ell^2(\Gamma)}{A \delta_{\upepsilon}}{\delta_{\upepsilon}} 
\end{equation}
for $\gamma \in \Gamma$; the definition is independent of the choice of $\Gamma$ which explains the second equivalence.
This trace shares the same properties on $\mathscr{N}_{r,l}(\Gamma)$ with the trace $\Tr{}{\cdot}$ on $\mathscr{B}(\mathcal{H})$. 
This trace gives rise to a dimension function $\dim_{\uptau}$:
$$ \dim_\uptau(V):=\uptau_\Gamma(P_V) \quad $$
for any $V\subset \ell^2(\Gamma)$ and $P_V:\ell^2(\Gamma)\rightarrow V$ as orthogonal projection. It is shown in \cite[Prop.6.7.4]{kadrich} that $\dim_\tau(\mathscr{N}_{r,l}(\Gamma))\in [0,1]$ if $\Gamma$ is an i.c.c.-group, i.e. $\mathscr{N}_r(\Gamma)$ and $\mathscr{N}_l(\Gamma)$ are factors.\\
\\
\clef{vNalr} can be extended to bounded operators on the tensor product $\ell^2(\Gamma)\otimes\mathcal{H}$. The resulting tensor products of algebras $\mathscr{N}_{r,l}(\Gamma)\otimes \mathscr{B}(\mathcal{H})$ are bounded operators on the tensor product Hilbert space which commute with any right respectively left translation operation. $\mathscr{N}_{r,l}(\Gamma)\otimes \mathscr{B}(\mathcal{H})$ thus also become von Neumann algebras. If $\mathscr{N}_{l}(\Gamma)$ and $\mathscr{N}_{r}(\Gamma)$ are factors, these tensor products are also factors.
The associated $\Gamma$-\textit{trace} is 
\begin{equation}\label{tracevNtensor}
\Tr{\Gamma}{A}:=(\uptau_\Gamma \otimes \Tr{}{A})\quad.
\end{equation}
$l^2(\Gamma)$ as well as $\ell^2(\Gamma)\otimes\mathcal{H}$ are special cases of a \textit{(general) Hilbert $\Gamma$-module}, i.e. Hilbert spaces with a unitary \textit{left action representation} $L_\Gamma$ of $\Gamma$. The following classifications are important for the upcoming analysis.
\begin{defi}\label{defihilbertmodules}
Let $\mathscr{H}$ be a general Hilbert $\Gamma$-module and $\mathcal{H}$ a Hilbert space;
\begin{itemize}
\item[(a)] $\mathscr{H}$ is called \textit{free} if it is unitarily isomorphic to a $\Gamma$-module $\ell^2(\Gamma)\otimes \mathcal{H}$ and the representation of $\Gamma$ is given by $\gamma \,\mapsto\,l_\gamma\otimes\Iop{\mathcal{H}}$ for any $\gamma \in \Gamma$.
\item[(b)] $\mathscr{H}$ is called \textit{projective} if it is unitarily isomorphic to a closed $\Gamma$-invariant subspace in $\ell^2(\Gamma)\otimes \mathcal{H}$.
\end{itemize}
\end{defi}
The unitary isomorphisms are understood as unitary maps, commuting with the action of $\Gamma$. We collect some facts about these Hilbert $\Gamma$-modules.
\vspace{0.25cm}
\begin{prop}[cf. Proposition 2.16, Corollary 2.16.1, Corollary 2.16.2 in \cite{shub}]~\label{prophilbertgammamodules}
\vspace*{-1em}
\begin{itemize}
\item[(1)] Let $V_1,V_2$ be general Hilbert $\Gamma$-modules and $A:V_1 \rightarrow V_2$ a linear topological $\Gamma$-isomorphism of $\Gamma$-modules, then there exists a unitary $\Gamma$-isomorphism $U:V_1 \rightarrow V_2$ of Hilbert $\Gamma$-modules.
\item[(2)] If $V$ is a general Hilbert $\Gamma$-module and there exists a topological $\Gamma$-isomorphism onto a free Hilbert $\Gamma$-module or a closed $\Gamma$-submodule of a free Hilbert $\Gamma$-module, then $V$ is a free respectively projective Hilbert $\Gamma$-module.
\end{itemize}
\end{prop}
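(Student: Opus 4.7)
The plan is to obtain the unitary $\Gamma$-isomorphism in (1) via polar decomposition and then to deduce (2) as a direct corollary by combining (1) with the definitions of free and projective Hilbert $\Gamma$-modules.

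\textbf{For part (1).} First I would record that since the left action representations $L^{V_i}_\Gamma$ are unitary and $A$ intertwines them, the Hilbert space adjoint $A^\ast:V_2\rightarrow V_1$ is automatically $\Gamma$-equivariant: taking adjoints of $L^{V_2}_\gamma A=AL^{V_1}_\gamma$ and using $(L^{V_i}_\gamma)^\ast=(L^{V_i}_\gamma)^{-1}=L^{V_i}_{\gamma^{-1}}$ yields $A^\ast L^{V_2}_\gamma=L^{V_1}_\gamma A^\ast$ for all $\gamma\in\Gamma$. Next I form the positive self-adjoint $\Gamma$-invariant operator $T:=A^\ast A\in\mathscr{B}(V_1)$. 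Since $A$ is a topological isomorphism, both $A$ and $A^{-1}$ are bounded, hence $T$ is bounded below by $\|A^{-1}\|^{-2}\Iop{V_1}$ and thus strictly positive and invertible. The continuous functional calculus then provides $|A|:=T^{1/2}$, a positive invertible self-adjoint operator on $V_1$. Because $T$ is $\Gamma$-invariant and the spectral calculus is built from polynomials (resp. norm-limits thereof) in $T$ and $T^{-1}$, every such function commutes with $L^{V_1}_\Gamma$, so $|A|$ and $|A|^{-1}$ are $\Gamma$-equivariant. I then set $U:=A|A|^{-1}:V_1\rightarrow V_2$ and check
\begin{equation*}
U^\ast U=|A|^{-1}A^\ast A|A|^{-1}=|A|^{-1}|A|^{2}|A|^{-1}=\Iop{V_1},
\end{equation*}
while surjectivity of $U$ follows from the bijectivity of $A$ and $|A|^{-1}$, giving $UU^\ast=\Iop{V_2}$. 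As a composition of two $\Gamma$-equivariant maps, $U$ intertwines the left action representations, so it is the desired unitary $\Gamma$-isomorphism.

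\textbf{For part (2).} This reduces to part (1) essentially by definition. If there is a topological $\Gamma$-isomorphism $A:V\rightarrow\ell^2(\Gamma)\otimes\mathcal{H}$ onto a free Hilbert $\Gamma$-module, then (1) applied to $A$ furnishes a unitary $\Gamma$-isomorphism $U:V\rightarrow\ell^2(\Gamma)\otimes\mathcal{H}$ (where the representation on the target is $\gamma\mapsto l_\gamma\otimes\Iop{\mathcal{H}}$), which is precisely the definition of $V$ being free. In the second case, suppose $A:V\rightarrow W$ is a topological $\Gamma$-isomorphism with $W$ a closed $\Gamma$-invariant subspace of some free $\ell^2(\Gamma)\otimes\mathcal{H}$. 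Endowing $W$ with the restricted inner product makes it a general Hilbert $\Gamma$-module in its own right, so (1) yields a unitary $\Gamma$-isomorphism $U:V\rightarrow W$, and composing with the inclusion $W\hookrightarrow\ell^2(\Gamma)\otimes\mathcal{H}$ realises $V$ as unitarily $\Gamma$-isomorphic to a closed $\Gamma$-invariant subspace of a free module, i.e.\ as projective.

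\textbf{Main obstacle.} The only genuine technical point is making sure that the functional calculus producing $|A|$ and $|A|^{-1}$ preserves $\Gamma$-invariance; this is standard once one observes that $T=A^\ast A$ commutes with every $L^{V_1}_\gamma$ (since both $A$ and $A^\ast$ do), so any Borel function of $T$ inherits that commutation by approximating $t\mapsto\sqrt{t}$ and $t\mapsto 1/\sqrt{t}$ on the compact spectrum of $T$ by polynomials. Everything else is a verification that the polar factor $U$ inherits the intertwining property, and that the definitions of \emph{free} and \emph{projective} in \Cref{defihilbertmodules} are preserved under unitary $\Gamma$-isomorphism—both of which are formal.
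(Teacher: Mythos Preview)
Your proposal is correct and follows the standard polar decomposition argument that underlies the cited results in \cite{shub}; the paper itself does not spell out a proof for this proposition but only refers to Shubin's treatment, and your construction of $U=A\lvert A\rvert^{-1}$ together with the observation that the continuous functional calculus preserves $\Gamma$-invariance is exactly the expected route. Part (2) is then, as you say, an immediate consequence of (1) combined with \Cref{defihilbertmodules}.
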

The notion \textit{topological} $\Gamma$-\textit{isomorphism} means that the topological isomorphism commutes with the left action representation. Both properties (1) and (2) indicate that it is enough to consider topological isomorphisms in \Cref{defihilbertmodules}. 
Certain subspaces carry the structure of a Hilbert $\Gamma$-module which we will show for later purposes.
\begin{lem}\label{helpinglemma1}
Let $\mathscr{H},\mathscr{H}_1,\mathscr{H}_2$ be projective respectively free Hilbert $\Gamma$-modules; the spaces $\mathscr{H}_1\oplus \mathscr{H}_2$, $W^\perp$ and $\mathscr{H}/{W}$ for any closed, $\Gamma$-invariant subspace $W\subset \mathscr{H}$ are projective respectively Hilbert $\Gamma$-(sub)modules.
\end{lem}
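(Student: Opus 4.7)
The plan is to reduce everything to Definition \ref{defihilbertmodules} by producing explicit $\Gamma$-equivariant closed embeddings into free Hilbert $\Gamma$-modules, appealing to Proposition \ref{prophilbertgammamodules} whenever a topological $\Gamma$-isomorphism needs to be upgraded to a unitary one.

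\textbf{Direct sum.} I would start from the natural unitary $\Gamma$-isomorphism
\[
(\ell^2(\Gamma)\otimes \mathcal{H}_1)\oplus (\ell^2(\Gamma)\otimes \mathcal{H}_2) \;\cong\; \ell^2(\Gamma)\otimes (\mathcal{H}_1 \oplus \mathcal{H}_2),
\]
which intertwines the actions $l_\gamma\otimes \Iop{}$ on either side. In the free case this directly realises $\mathscr{H}_1 \oplus \mathscr{H}_2$ as a free Hilbert $\Gamma$-module. In the projective case, choose $\Gamma$-equivariant closed embeddings $\iota_i : \mathscr{H}_i \hookrightarrow \ell^2(\Gamma)\otimes \mathcal{H}_i$ (which exist by definition) and form $\iota_1\oplus \iota_2$; its image is closed and $\Gamma$-invariant inside the free module on the right, so $\mathscr{H}_1 \oplus \mathscr{H}_2$ is projective.

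\textbf{Orthogonal complement.} The essential ingredient is unitarity of the left action representation $L_\Gamma$ on $\mathscr{H}$, which forces $\Gamma$-invariance of $W^\perp$: for $v\in W^\perp$, $w\in W$, and $\gamma\in \Gamma$,
\[
\langle L_\gamma v, w\rangle_\mathscr{H} \;=\; \langle v, L_{\gamma^{-1}} w\rangle_\mathscr{H} \;=\; 0,
\]
because $L_{\gamma^{-1}} w\in W$ by $\Gamma$-invariance of $W$. Hence $W^\perp$ is a closed $\Gamma$-invariant subspace of $\mathscr{H}$, and composing the inclusion $W^\perp\hookrightarrow \mathscr{H}$ with the $\Gamma$-equivariant embedding $\mathscr{H}\hookrightarrow \ell^2(\Gamma)\otimes \mathcal{H}$ realises $W^\perp$ as a closed $\Gamma$-invariant subspace of a free Hilbert $\Gamma$-module, which is precisely projectivity.

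\textbf{Quotient.} Decompose $\mathscr{H} = W \oplus W^\perp$ orthogonally; by the previous step this is a $\Gamma$-invariant decomposition. The canonical composition $W^\perp\hookrightarrow \mathscr{H}\twoheadrightarrow \mathscr{H}/W$ is a bounded $\Gamma$-equivariant bijection onto the quotient Hilbert space. Proposition \ref{prophilbertgammamodules}(1) upgrades it to a unitary $\Gamma$-isomorphism, and then Proposition \ref{prophilbertgammamodules}(2) transfers projectivity of $W^\perp$ to $\mathscr{H}/W$. No step here is a genuine obstacle; the only care required is the bookkeeping of $\Gamma$-equivariance through the tensor-product identification and through the orthogonal decomposition, both of which are immediate consequences of the unitarity of the representations involved.
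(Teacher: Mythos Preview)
Your proof is correct and follows essentially the same route as the paper's: show $\Gamma$-invariance of each construction via unitarity of the left action, then embed into a free module (direct sum via $\iota_1\oplus\iota_2$, orthogonal complement via inclusion), and finally transfer projectivity to the quotient through the canonical isomorphism $W^\perp\cong\mathscr{H}/W$ together with \Cref{prophilbertgammamodules}. The only cosmetic difference is that you make the tensor identification $(\ell^2(\Gamma)\otimes\mathcal{H}_1)\oplus(\ell^2(\Gamma)\otimes\mathcal{H}_2)\cong\ell^2(\Gamma)\otimes(\mathcal{H}_1\oplus\mathcal{H}_2)$ explicit, which cleanly gives freeness of the direct sum in the free case, whereas the paper leaves this implicit.
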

\begin{proof}
We consider the projective case as it covers the case of free Hilbert $\Gamma$-modules. Let $\gamma$ be any element in $\Gamma$. One needs to check that these spaces, well-known to be Hilbert spaces, are themselves $\Gamma$-invariant Hilbert spaces.\\
\\
Let $\SET{L_\gamma \vert\,\gamma \in \Gamma}$ denote the left action representation on $\mathscr{H}_1$ and $\SET{\mathcal{L}_\gamma \vert\,\gamma \in \Gamma}$ the left action representation on $\mathscr{H}_2$. The left action representation on the direct sum is the direct sum of the left action representation, i.e. $\Gamma$ induces a diagonal action on both Hilbert $\Gamma$-modules. As the ranges of the left action representations are contained in their belonging Hilbert $\Gamma$-module, we have
\begin{equation*}
(L_{\Gamma}\oplus\mathcal{L}_\Gamma)(\mathscr{H}_1\oplus\mathscr{H}_2)=L_{\Gamma}(\mathscr{H}_1)\oplus \mathcal{L}_{\Gamma}(\mathscr{H}_2)\subseteq \mathscr{H}_1\oplus\mathscr{H}_2 
\end{equation*}
for any $\gamma\in \Gamma$. Hence the direct sum is $\Gamma$-invariant and becomes a general Hilbert $\Gamma$-module. The orthogonal complement is also a $\Gamma$-invariant subpace: let $\SET{L_\gamma \vert\,\gamma \in \Gamma}$ now denote the left action representation on $\mathscr{H}$; suppose $v \in W^\perp$, i.e. $v\in \mathscr{H}$ such that $\dscal{1}{\mathscr{H}}{v}{u}=0$ for all $u \in W$. Then $L_\gamma v \in W^\perp$ since the action of $\Gamma$ is unitary and thus $L_\gamma(W^\perp)\subseteq W^\perp$, implying $W^\perp$ to be a general Hilbert-$\Gamma$-submodule. The quotient Hilbert space consists of equivalence classes for each element in $\mathscr{H}$ where two Hilbert vectors are equivalent if the difference is an element in $W$. We first check that this equivalence relation is also true for transformed Hilbert vectors: as $\mathscr{H}$ is a Hilbert $\Gamma$-module, the elements $L_\gamma v_1, L_\gamma v_2$ are in $L_\gamma(\mathscr{H})\subseteq \mathscr{H}$ for $v_1,v_2 \in \mathscr{H}$ and $L_\gamma(v_1-v_2)\in L_\gamma(W)\subseteq W$ since $W$ is a $\Gamma$-submodule. As the group action is linear, we have $L_\gamma (v_1-v_2) \in W$. Thus, each equivalence class is $\Gamma$-invariant: $L_\gamma\left(\mathscr{H}/{W}\right)\subseteq \mathscr{H}/{W}$. \\
\\
It is left to check that all these Hilbert $\Gamma$-modules are projective. Given some Hilbert spaces $\mathcal{H},\mathcal{H}_1$ and $\mathcal{H}_2$ such that $\mathscr{H},\mathscr{H}_1$ and $\mathscr{H}_2$ are 
unitarily related to closed submodules of $\ell^2(\Gamma)\otimes\mathcal{H}, \ell^2(\Gamma)\otimes\mathcal{H}_1$ and respectively $\ell^2(\Gamma)\otimes\mathcal{H}_2$. As these unitary isomorphisms are closed and commute with the $\Gamma$-action, we only need to check that the subspaces in the claim are isomorphic to a closed subset in $\ell^2(\Gamma)\otimes(\mathcal{H}_1\oplus\mathcal{H}_2)$ or rather $\ell^2(\Gamma)\otimes\mathcal{H}$.\\
\\
The direct sum of the two unitary $\Gamma$-isomorphisms implies a unitary $\Gamma$-isomorphism on the direct sum of the closed $\Gamma$-submodules which is again a closed $\Gamma$-submodule of $\ell^2(\Gamma)\otimes(\mathcal{H}_1\oplus\mathcal{H}_2)$. Projectivity of $W^\perp$ follows from restricting the unitary $\Gamma$-isomorphism on $W^\perp$ which again maps to a closed subspace in $\ell^2(\Gamma)\otimes \mathcal{H}$. The commuting of the $\Gamma$-isomorphisms with the group action implies $\Gamma$-invariance of this closed subspace such that $W^\perp$ becomes a projective Hilbert $\Gamma$-submodule. Since $W^\perp$ and $\mathscr{H}/{W}$ are $\Gamma$-invariant, the known isomorphy $W^\perp \cong \mathscr{H}/{W}$ due to closedness of $W$ is $\Gamma$-invariant and induces a unitary $\Gamma$-isomorphism according to \Cref{prophilbertgammamodules} (1). As $W^\perp$ is a projective Hilbert $\Gamma$-module, $\mathscr{H}/{W}$ becomes projective due to \Cref{prophilbertgammamodules} (2).\qedhere
\end{proof}

\noindent We consider the space of bounded operators on a general Hilbert $\Gamma$-module $\mathscr{H}$ which commute with the left action representation $L_\gamma$ for all $\gamma \in \Gamma$:
\begin{equation}\label{vNcomm}
\mathscr{B}_\Gamma(\mathscr{H}):=\SET{A \in \mathscr{B}(\mathscr{H})\,\vert\,AL_\gamma=L_\gamma A \quad \forall\, \gamma \in \Gamma}\quad.
\end{equation} 
This is again a von Neumann algebra as $\mathscr{B}_\Gamma(\mathscr{H})$ is the commutant of the space $\SET{L_\gamma\,\vert\,\gamma \in \Gamma}$. If we replace $\mathscr{H}$ with the tensor product $\ell^2(\Gamma)\otimes \mathcal{H}$, we gain
\begin{equation}\label{vNcommspecI}
\mathscr{B}_\Gamma(\ell^2(\Gamma)\otimes \mathcal{H})=\SET{A \in \mathscr{B}(\ell^2(\Gamma)\otimes \mathcal{H})\,\vert\,AL_\gamma=L_\gamma A \quad \forall\, \gamma \in \Gamma}
\end{equation}
and it coincides with $\mathscr{N}_r(\Gamma)\otimes \mathscr{B}(\mathcal{H})$. Thus, if $\mathscr{H}$ is in fact a free Hilbert $\Gamma$-module which is unitarily isomorphic to $\ell^2(\Gamma)\otimes \mathcal{H}$ with Hilbert space $\mathcal{H}$, then this unitary $\Gamma$-isomorphism induces a unitary $\Gamma$-isomorphism between \clef{vNcomm} and \clef{vNcommspecI}:
\begin{equation}\label{vNcommfree}
\mathscr{B}_\Gamma(\mathscr{H})\cong \mathscr{N}_r(\Gamma)\otimes \mathscr{B}(\mathcal{H}) \quad.
\end{equation} 
Let $V$ be a Hilbert $\Gamma$-module which is unitarily isomorphic to a closed $\Gamma$-invariant subspace $W \subset \ell^2(\Gamma)\otimes \mathcal{H}$. The orthogonal projection $P_W$ commutes with $l_\gamma\otimes \Iop{\mathcal{H}}$ for every $\gamma \in \Gamma$ such that $P_W \in \mathscr{B}_\Gamma(\ell^2(\Gamma)\otimes \mathcal{H})$ and thus $P_V \in \mathscr{B}_\Gamma(\mathscr{H})$. The trace on the von Neumann algebra \clef{vNcommfree} is given by \clef{tracevNtensor}. From \cite[Thm.2.16]{shub} we recall that the trace does not depend on the inclusion of $V$ in $\ell^2(\Gamma)\otimes \mathcal{H}$ via $W$ such that the $\Gamma$-\textit{dimension}
\begin{equation}\label{gammadimdefi}
\dim_\Gamma(V):=\Tr{\Gamma}{P_V}=\Tr{\Gamma}{P_W} 
\end{equation}
is well-defined and has the following properties.
\begin{prop}\label{propgammadim} Let  $\mathscr{H}$,  $\mathscr{H}_1$ and $\mathscr{H}_2$ be (free or projective) Hilbert $\Gamma$-modules; the following properties are satisfied by \clef{gammadimdefi}:
\begin{itemize}
\item[(1)] $\dim_\Gamma(\mathscr{H})$ is independent of the inclusion $\mathscr{H}\subset \ell^2(\Gamma)\otimes\mathcal{H}$;
\item[(2)] $\dim_\Gamma(\ell^2(\Gamma))=1$, $\dim_\Gamma(\SET{0})=0$ and $\dim_\Gamma(\ell^2(\Gamma)\otimes\mathcal{H})=\dim(\mathcal{H})$;
\item[(3)] $\dim_\Gamma(\mathscr{H}_1\oplus\mathscr{H}_2)=\dim_\Gamma(\mathscr{H}_1)+\dim_\Gamma(\mathscr{H}_2)$ for two orthogonal $\Gamma$-modules $\mathscr{H}_1,\mathscr{H}_2$;
\item[(4)] $\mathscr{H}_1\subset\mathscr{H}_2\,\Rightarrow\,\dim_\Gamma(\mathscr{H}_1)\leq\dim_\Gamma(\mathscr{H}_2)$ and $\dim_\Gamma(\mathscr{H}_1)=\dim_\Gamma(\mathscr{H}_2)$ if and only if $\mathscr{H}_1=\mathscr{H}_2$;
\item[(5)] $\dim_\Gamma(\mathscr{H}_1)=\dim_\Gamma(\mathscr{H}_2)\,\Leftrightarrow$ $\mathscr{H}_1$ and $\mathscr{H}_2$ are unitarily $\Gamma$-isomorphic to each other. 
\item[(6)] If $\Gamma$ is finite with cardinality $\absval{\Gamma}$, then $\absval{\Gamma}\dim_\Gamma$ coincides with $\dim$; if $\Gamma=\SET{\upepsilon}$, \\ then $\dim_\Gamma$ coincides with $\dim$. 
\end{itemize}
\end{prop}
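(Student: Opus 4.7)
The plan is to exploit that the $\Gamma$-trace $\Tr{\Gamma}{\cdot}=\uptau_\Gamma\otimes\Tr{}{\cdot}$ is a faithful, positive, normal trace on the von Neumann algebra $\mathscr{N}_r(\Gamma)\otimes\mathscr{B}(\mathcal{H})$, and that every orthogonal projection $P_{\mathscr{H}}$ associated to a closed $\Gamma$-invariant submodule $\mathscr{H}\subset\ell^2(\Gamma)\otimes\mathcal{H}$ lies in this algebra by \clef{vNcommspecI}. All six claims will then follow from standard trace calculus applied to these projections.

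First I would dispatch the purely computational items. Property (2) is immediate from the definitions: $\dim_\Gamma(\ell^2(\Gamma))=\uptau_\Gamma(\Iop{\ell^2(\Gamma)})=\dscal{1}{\ell^2(\Gamma)}{\delta_\upepsilon}{\delta_\upepsilon}=1$, the zero submodule has vanishing projection, and for $\ell^2(\Gamma)\otimes\mathcal{H}$ one applies the tensor factorization of the $\Gamma$-trace to the identity $\Iop{\ell^2(\Gamma)}\otimes\Iop{\mathcal{H}}$. Property (3) I would deduce from the orthogonality identity $P_{\mathscr{H}_1\oplus\mathscr{H}_2}=P_{\mathscr{H}_1}+P_{\mathscr{H}_2}$ combined with linearity of the trace.

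Next I would handle (1), (4), and the easy direction of (5), which all rest on the trace property together with positivity and faithfulness. For (4), the inclusion $\mathscr{H}_1\subseteq\mathscr{H}_2$ yields $P_{\mathscr{H}_2}-P_{\mathscr{H}_1}\geq 0$, so taking $\Gamma$-traces gives the desired inequality; equality forces the non-negative difference to have vanishing trace, hence to be zero by faithfulness. For the trivial direction of (5), a unitary $\Gamma$-isomorphism $U:\mathscr{H}_1\rightarrow\mathscr{H}_2$ extends to a partial isometry in $\mathscr{N}_r(\Gamma)\otimes\mathscr{B}(\mathcal{H})$ with $U^\ast U=P_{\mathscr{H}_1}$ and $UU^\ast=P_{\mathscr{H}_2}$; cyclicity of the trace then equates the two $\Gamma$-dimensions. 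The independence claim (1) is the same partial-isometry argument applied to the unitary $\Gamma$-intertwiner between any two embeddings of $\mathscr{H}$, which exists by \Cref{prophilbertgammamodules}~(1).

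The hard part will be the converse in (5): equal $\Gamma$-dimension must produce an actual unitary $\Gamma$-isomorphism. This is the Murray--von Neumann comparison theorem adapted to our setting. Since $\mathscr{N}_r(\Gamma)\otimes\mathscr{B}(\mathcal{H})$ is a semifinite von Neumann algebra (and a factor when $\Gamma$ is i.c.c.), two projections with the same finite normal trace are Murray--von Neumann equivalent via a partial isometry lying in the algebra, i.e.\ commuting with the left $\Gamma$-action; such an intertwiner restricts to the required unitary $\Gamma$-isomorphism of the submodules. Finally, (6) I would verify by direct computation: for finite $\Gamma$, the trace $\uptau_\Gamma$ coincides with $\absval{\Gamma}^{-1}\Tr{}{\cdot}$ on $\mathscr{B}(\ell^2(\Gamma))$, so that $\absval{\Gamma}\dim_\Gamma=\dim$, and the trivial-group case is just the specialization $\absval{\Gamma}=1$.
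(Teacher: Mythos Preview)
The paper does not give a proof of this proposition; it is stated as a list of standard facts, drawn from the main reference \cite{shub} on which \Cref{chap:Galois} is explicitly based. Your outline via the faithful normal trace on $\mathscr{N}_r(\Gamma)\otimes\mathscr{B}(\mathcal{H})$ is the standard route and matches what one finds in that reference, so there is nothing to compare at the level of strategy.

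One genuine gap, however: your argument for the converse direction of (5) is not valid as written. You invoke Murray--von Neumann comparison in a ``semifinite von Neumann algebra (and a factor when $\Gamma$ is i.c.c.)'', but in a non-factor semifinite algebra two projections with the same scalar trace need not be equivalent. Concretely, for $\Gamma=\mathbb{Z}/2\mathbb{Z}$ the trivial and sign subrepresentations inside $\ell^2(\Gamma)$ both have $\dim_\Gamma=\tfrac12$, yet no nonzero $\Gamma$-intertwiner exists between them by Schur's lemma. Comparison of projections by trace value alone is a theorem about \emph{factors}; in general one needs the centre-valued trace. So the hypothesis that $\Gamma$ is i.c.c.\ is essential for (5), not parenthetical. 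The paper's standing assumption in \Cref{chap:setting} is precisely that $\Gamma$ has infinite conjugacy classes, so (5) is correct in the intended application, but your write-up should make the factor hypothesis explicit rather than treat semifiniteness as sufficient.
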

The von Neumann algebra \clef{vNcomm} motivates to define bounded operators between two different Hilbert $\Gamma$-modules. We designate with $\mathscr{L}(\mathscr{H}_1,\mathscr{H}_2)$ the space of linear operators from a domain $\dom{}{A}\subset \mathscr{H}_1$ with range in the codomain $\mathscr{H}_2$.

\begin{defi}\label{defigammaopmorph}
Given two Hilbert $\Gamma$-modules $\mathscr{H}_1$, $\mathscr{H}_2$ with left action representations $\SET{L_\gamma \vert\,\gamma \in \Gamma}$ respectively $\SET{\mathcal{L}_\gamma \vert\,\gamma \in \Gamma}$ and $A\in \mathscr{L}(\mathscr{H}_1,\mathscr{H}_2)$, then we call $A$ a
\begin{itemize}
\item[(a)] $\Gamma$-\textit{operator}\bnote{F10} if $AL_\gamma=\mathcal{L}_\gamma A$ for all $\gamma \in \Gamma$;
\item[(b)] $\Gamma$-\textit{morphism} if $A$ is an $\Gamma$-operator and $A \in \mathscr{B}(\mathscr{H}_1,\mathscr{H}_2)$.
\end{itemize}
\end{defi}
We will denote the space of $\Gamma$-morphisms with $\mathscr{B}_\Gamma(\mathscr{H}_1,\mathscr{H}_2)$. We designate with $\mathscr{L}_\Gamma(\mathscr{H}_1,\mathscr{H}_2)$ the set of linear, $\Gamma$-invariant operators from $\dom{}{A}\subset\mathscr{H}_1$ to $\mathscr{H}_2$ with graph $\Graph{A}$. Murray and von Neumann have shown in \cite[Theorem XV]{murvN} that the subset of closed and densely defined operators is a $\ast$-algbra which contains a von Neumann algebra.
Coming back to our $\Gamma$-setting, we define 
\begin{equation*}
\mathscr{C}_\Gamma(\mathscr{H}):=\SET{A \in \mathscr{L}_\Gamma(\mathscr{H})\,\vert\,A\,\text{closed and densely defined}} 
\end{equation*}
which contains the von Neumann algebra $\mathscr{B}_\Gamma(\mathscr{H})$. We define moreover the set
\begin{equation*}
\mathscr{C}_\Gamma(\mathscr{H}_1,\mathscr{H}_2):=\SET{A \in \mathscr{L}_\Gamma(\mathscr{H}_1,\mathscr{H}_2)\,\vert\,\dom{}{A}\subset \mathscr{H}_1\,\text{dense}\,,\,\Graph{A}\,\text{closed}}.
\end{equation*}
The intertwining of the left action representation shows in addition that the domain and the graph of closed and densely defined operators are in fact projective Hilbert $\Gamma$-modules. 
\begin{lem}[Lemma 3.8.4 in \cite{shub}]\label{unboundeddomhilbertmod}
Let $\mathscr{H}_1,\mathscr{H}_2$ be Hilbert $\Gamma$-modules and $A\in \mathscr{C}(\mathscr{H}_1,\mathscr{H}_2)$ then
\begin{itemize}
\item[(1)] $A\in \mathscr{C}_\Gamma(\mathscr{H}_1,\mathscr{H}_2)$ if and only if $\Graph{A}$ is $\Gamma$-invariant with respect to the diagonal action of $\Gamma$;
\item[(2)] If $A\in \mathscr{C}_\Gamma(\mathscr{H}_1,\mathscr{H}_2)$, the domain $(\dom{\mathscr{H}_1}{A},\norm{\bullet}{\Graph{A}})$ becomes a projective Hilbert $\Gamma$-module. 
\end{itemize}  
\end{lem}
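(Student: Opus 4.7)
The plan is to treat (1) as a book-keeping equivalence between intertwining on the domain and invariance of the graph, and then to obtain (2) by identifying the domain in the graph norm with the graph itself, which by (1) is a closed $\Gamma$-invariant submodule of $\mathscr{H}_1\oplus\mathscr{H}_2$.

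For (1) I would unfold both sides: writing $L_\gamma$, $\mathcal{L}_\gamma$ for the representations on $\mathscr{H}_1, \mathscr{H}_2$, the diagonal action on $\mathscr{H}_1\oplus\mathscr{H}_2$ is $L_\gamma\oplus\mathcal{L}_\gamma$. The graph $\Graph{A}=\{(x,Ax)\,\vert\,x\in\dom{}{A}\}$ is diagonally $\Gamma$-invariant iff for every $\gamma\in\Gamma$ and $x\in\dom{}{A}$ we have $L_\gamma x\in\dom{}{A}$ and $AL_\gamma x=\mathcal{L}_\gamma Ax$. The first condition says $L_\gamma(\dom{}{A})\subseteq\dom{}{A}$ (and by using $\gamma^{-1}$, equality), and the second is precisely the intertwining identity $AL_\gamma=\mathcal{L}_\gamma A$ on $\dom{}{A}$. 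Thus (1) is immediate from the definition of a $\Gamma$-operator.

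For (2) the key observation is that the map
\begin{equation*}
\iota\,:\,(\dom{}{A},\norm{\bullet}{\Graph{A}})\,\longrightarrow\,\Graph{A}\subset\mathscr{H}_1\oplus\mathscr{H}_2,\quad x\,\mapsto\,(x,Ax),
\end{equation*}
is a surjective linear isometry, hence a unitary isomorphism of Hilbert spaces (the target is a Hilbert space precisely because $A$ is closed, which makes $\Graph{A}$ a closed subspace of $\mathscr{H}_1\oplus\mathscr{H}_2$). Using (1), $\iota$ is furthermore $\Gamma$-equivariant, since
\begin{equation*}
\iota(L_\gamma x)=(L_\gamma x,AL_\gamma x)=(L_\gamma x,\mathcal{L}_\gamma Ax)=(L_\gamma\oplus\mathcal{L}_\gamma)\,\iota(x).
\end{equation*}
In particular the graph norm is $\Gamma$-invariant, so $\dom{}{A}$ equipped with the graph norm is a general Hilbert $\Gamma$-module.

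To upgrade this to projectivity I would invoke \Cref{helpinglemma1}: since $\mathscr{H}_1,\mathscr{H}_2$ are (free or projective) Hilbert $\Gamma$-modules, the direct sum $\mathscr{H}_1\oplus\mathscr{H}_2$ is a projective Hilbert $\Gamma$-module. By (1), $\Graph{A}$ is a closed $\Gamma$-invariant subspace of this projective module and is therefore itself projective (again by \Cref{helpinglemma1}, or by unfolding the definition through the embedding into some $\ell^2(\Gamma)\otimes\mathcal{H}$). The unitary $\Gamma$-isomorphism $\iota$ then transports projectivity back to $(\dom{}{A},\norm{\bullet}{\Graph{A}})$, finishing the proof. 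The argument is largely formal; the only point that requires care is checking $\Gamma$-equivariance of $\iota$, which is exactly where (1) enters, so I would sequence the two parts in the stated order and reuse (1) explicitly in (2) rather than reprove the invariance.
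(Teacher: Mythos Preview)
Your proof is correct and follows the standard argument. Note, however, that the paper does not actually prove this lemma: it is stated as a citation from \cite{shub} (Lemma 3.8.4 there) and used as a black box, so there is no in-paper proof to compare against. Your argument via the graph isometry $\iota$ and \Cref{helpinglemma1} is the natural one; the only minor quibble is that \Cref{helpinglemma1} as stated covers $W^\perp$ and $\mathscr{H}/W$ rather than $W$ itself, but you correctly note that projectivity of a closed $\Gamma$-invariant subspace follows directly from unfolding \Cref{defihilbertmodules} through the embedding into $\ell^2(\Gamma)\otimes\mathcal{H}$ (or, alternatively, by applying the $W^\perp$ case twice).
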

Certain subspaces with respect to $\Gamma$-operators do carry a Hilbert $\Gamma$-module structure which we will show for later purposes.
\begin{lem}\label{helpinglemma2}
Let $\mathscr{H}_1,\mathscr{H}_2$ be Hilbert $\Gamma$-modules and $A \in \mathscr{C}_\Gamma(\mathscr{H}_1,\mathscr{H}_2)$, then
\begin{itemize}
\item[(1)]
$\kernel{A}$ is a projective Hilbert $\Gamma$-submodule of $\mathscr{H}_1$. 
\item[(2)]
If $\range{A}$ is closed, then it is a projective Hilbert $\Gamma$-submodule of $\mathscr{H}_2$. 
\end{itemize}
\end{lem}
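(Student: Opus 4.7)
The plan is to reduce both statements to the same observation: a closed, $\Gamma$-invariant subspace of a projective Hilbert $\Gamma$-module is itself a projective Hilbert $\Gamma$-module. This observation follows directly from \Cref{defihilbertmodules} (b) together with \Cref{prophilbertgammamodules} (2): pulling back such a subspace through the unitary $\Gamma$-isomorphism of $\mathscr{H}_i$ onto a closed $\Gamma$-invariant subspace of $\ell^2(\Gamma)\otimes \mathcal{H}_i$ yields again a closed $\Gamma$-invariant subspace of $\ell^2(\Gamma)\otimes \mathcal{H}_i$. It therefore suffices to verify closedness and $\Gamma$-invariance for $\kernel{A}$ and, under the stated hypothesis, for $\range{A}$.

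For (1) I would argue as follows. Closedness of $\kernel{A}$ is inherited from closedness of $A$: if a sequence $(v_n)\subset \kernel{A}$ converges to $v \in \mathscr{H}_1$, then $(v_n,0)\to (v,0)$ in $\mathscr{H}_1\oplus\mathscr{H}_2$, and since $\Graph{A}$ is closed one obtains $v\in\dom{}{A}$ with $Av=0$. For $\Gamma$-invariance, I would invoke \Cref{unboundeddomhilbertmod} (1): the graph $\Graph{A}$ is $\Gamma$-invariant under the diagonal action $L_\gamma\oplus \mathcal{L}_\gamma$. Hence for $v\in\kernel{A}$ the pair $(v,0)\in\Graph{A}$ is sent to $(L_\gamma v,\mathcal{L}_\gamma 0)=(L_\gamma v,0)\in\Graph{A}$, so that $L_\gamma v\in\dom{}{A}$ and $A L_\gamma v=0$, i.e.\ $L_\gamma v\in\kernel{A}$. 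Thus $\kernel{A}$ is a closed, $\Gamma$-invariant subspace of the projective Hilbert $\Gamma$-module $\mathscr{H}_1$, and the opening observation finishes the argument.

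For (2) closedness is exactly the hypothesis. For $\Gamma$-invariance, given $w=Av\in\range{A}$, the pair $(v,w)\in\Graph{A}$ is mapped under the diagonal action to $(L_\gamma v,\mathcal{L}_\gamma w)\in\Graph{A}$ by \Cref{unboundeddomhilbertmod} (1), so $\mathcal{L}_\gamma w=A(L_\gamma v)\in\range{A}$. Hence $\mathcal{L}_\gamma(\range{A})\subseteq \range{A}$, and $\range{A}$ is a closed, $\Gamma$-invariant subspace of the projective Hilbert $\Gamma$-module $\mathscr{H}_2$; applying the same opening observation gives projectivity.

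The only subtle point, and what I expect to be the main obstacle if one tries to do this by hand without the machinery already set up, is the domain issue: one must know that $L_\gamma v\in\dom{}{A}$ whenever $v\in\dom{}{A}$, which is not obvious from \Cref{defigammaopmorph} alone but is precisely what \Cref{unboundeddomhilbertmod} provides via $\Gamma$-invariance of the graph. Everything else is a straightforward application of \Cref{prophilbertgammamodules} (or equivalently \Cref{helpinglemma1}) to the closed $\Gamma$-submodule in question.
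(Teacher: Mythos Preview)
Your proof is correct and follows essentially the same approach as the paper: reduce to showing that $\kernel{A}$ and $\range{A}$ are closed and $\Gamma$-invariant, then invoke the characterisation of projective Hilbert $\Gamma$-modules. The paper first treats the bounded case via the intertwining relation $AL_\gamma=\mathcal{L}_\gamma A$ and then says the argument carries over to $\mathscr{C}_\Gamma$ by appealing to \Cref{unboundeddomhilbertmod} (2), whereas you work directly in the unbounded setting through $\Gamma$-invariance of the graph (\Cref{unboundeddomhilbertmod} (1)); this is a cosmetic difference only, and your handling of the domain issue is in fact slightly more explicit.
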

\begin{proof}
Let $\gamma$ be any element in $\Gamma$ and assume for simplicity that $A\in \mathscr{B}_\Gamma(\mathscr{H}_1,\mathscr{H}_2)$. We proceed as in the proof of \Cref{helpinglemma1}. The kernel is always closed in $\mathscr{H}_1$ and the range of $A$ is closed in $\mathscr{H}_2$ by assumption such that both are Hilbert spaces. We check that they are $\Gamma$-invariant due to the $\Gamma$-invariance of the operator $A$: let $L_\gamma$ denote the left action representation on $\mathscr{H}_1$ and $\mathcal{L}_\gamma$ the left action representation on $\mathscr{H}_2$: 
\begin{itemize}
\item[(1)]
suppose $u \in \kernel{A}$, then the intertwining of $A$ and the left action representation shows that also $L_\gamma u \in \kernel{A}$:
\begin{equation*}
Au=0 \quad \Rightarrow \quad AL_\gamma u = \mathcal{L}_\gamma A u =0 \quad \Rightarrow \quad L_\gamma(\kernel{A})\subseteq\kernel{A}\quad.
\end{equation*}
\item[(2)]
if $v \in \range{A}$, then there exists a $u \in \mathscr{H}_1$ such that $v=Au$. Applying $\mathcal{L}_\gamma$ from the left and using the commutation property of $A$ gives
\begin{equation*}
\quad\quad\mathcal{L}_\gamma v = \mathcal{L}_\gamma A u = A L_\gamma u \, \Rightarrow \, \mathcal{L}_\gamma(\range{A})\subseteq \SET{v \in \mathscr{H}_2\,\vert\, \exists\,w\in \range{L_\gamma}\,:\,v=Aw}
\end{equation*} 
and, as $\mathscr{H}_1$ is already a Hilbert $\Gamma$-module, we gain
\begin{equation*}
\mathcal{L}_\gamma(\range{A})\subseteq \SET{v \in \mathscr{H}_2\,\vert\, \exists\,u\in \mathscr{H}_1\,:\,v=Au}=\range{A}\quad.
\end{equation*} 
\end{itemize}
The projectivity follows from the fact that the unitary $\Gamma$-isomorphisms from $\mathscr{H}_i$ to $\ell^2(\Gamma)\otimes \mathcal{H}_i$ ($i\in \SET{1,2}$) are closed maps like in the proof of \Cref{helpinglemma1}.\\
\\
The arguments carry over to $\Gamma$-operators in $\mathscr{C}_\Gamma(\mathscr{H}_1,\mathscr{H}_2)$ with the little modification that according to \Cref{unboundeddomhilbertmod} (2), the domain of $A$ in $\mathscr{H}_1$ is a projective Hilbert $\Gamma$-module and the shown $\Gamma$-invariance holds true as the domain is $\Gamma$-invariant.
\end{proof}
%

\subsection{$\Gamma$-Fredholm operators}\label{chap:galois-sec:gammaop}

We already introduced the three spaces of operators $\mathscr{B}_\Gamma(\mathscr{H}_1,\mathscr{H}_2)$, $\mathscr{L}_\Gamma(\mathscr{H}_1,\mathscr{H}_2)$, and $\mathscr{C}_\Gamma(\mathscr{H}_1,\mathscr{H}_2)$ for two Hilbert $\Gamma$-modules $\mathscr{H}_1$ and $\mathscr{H}_2$. Fredholm operators in $\Gamma$-setting are introduced with the help of $\Gamma$-\textit{ideals}.
\begin{defi}[Definition 3.10.4, 3.10.5 in \cite{shub}, Definition 2.5 in \cite{vaill}]\label{gammaideals}
Given an operator $A \in \mathscr{B}_\Gamma(\mathscr{H}_1,\mathscr{H}_2)$; 
\begin{itemize}
\item[(a)] $A$ is a \textit{finite $\Gamma$-rank operator} if $\dim_\Gamma(\range{A})<\infty$ (we write $A \in \mathscr{R}_\Gamma(\mathscr{H}_1,\mathscr{H}_2)$).
\item[(b)] $A$ is a $\Gamma$-\textit{Hilbert-Schmidt operator} if $\Tr{\Gamma}{A^\ast A} < \infty$ (we write $A \in \mathscr{S}^2_\Gamma(\mathscr{H}_1,\mathscr{H}_2)$).
\item[(c)] $A$ is a $\Gamma$-\textit{trace class operator} if $\Tr{\Gamma}{\absval{A}} < \infty$ (we write $A \in \mathscr{S}^1_\Gamma(\mathscr{H}_1,\mathscr{H}_2)$). 
\item[(c)] $A$ is a $\Gamma$-\textit{compact operator} if $A$ lies in the norm closure of $\mathscr{S}^1_\Gamma(\mathscr{H}_1,\mathscr{H}_2)$ in $\mathscr{B}_\Gamma(\mathscr{H}_1,\mathscr{H}_2)$ (we write $A \in \mathscr{K}_\Gamma(\mathscr{H}_1,\mathscr{H}_2)$).
\end{itemize} 
\end{defi}
\begin{rem}\label{remsgammaideals}
\begin{itemize}
\item[]
\item[(i)] (\cite[Lem.3.10.13(a)]{shub}, \cite[Lem.2.6]{vaill}) All introduced operators are right-ideals over $\mathscr{B}_\Gamma(\mathscr{H}_1)$ and left-ideals over $\mathscr{B}_\Gamma(\mathscr{H}_2)$ and become two-sided ideals for $\mathscr{H}_1=\mathscr{H}_2$. We also have the following inclusions:
$$\mathscr{R}_\Gamma \subset  \mathscr{S}^1_\Gamma \subset  \mathscr{S}^2_\Gamma \subset \mathscr{K}_\Gamma \subset \mathscr{B}_\Gamma \subset \mathscr{C}_\Gamma \subset \mathscr{L}_\Gamma\quad.$$
\item[(ii)] (\cite[Lem.3.10.13(b)]{shub}) An alternative definition for $\Gamma$-trace class operators is the representation as a finite sum of two $\Gamma$-Hilbert Schmidt operators. If $A\in \mathscr{B}_\Gamma(\mathscr{H}_1,\mathscr{H}_2)$, $B \in \mathscr{B}_\Gamma(\mathscr{H}_2,\mathscr{H}_1)$ and either one of them is $\Gamma$-trace class or both $\Gamma$-Hilbert-Schmidt, then $AB\in \mathscr{S}^1_\Gamma(\mathscr{H}_2)$ as well as $BA\in \mathscr{S}^1_\Gamma(\mathscr{H}_1)$ and $\Tr{\Gamma}{AB}=\Tr{\Gamma}{BA}$ holds.
\item[(iii)] $\Gamma$-compact operators are in general not compact in the usual sense.
\end{itemize}
\end{rem}
Let $\mathscr{H}_1(=\mathscr{H})$ and $\mathscr{H}_2$ be free Hilbert $\Gamma$-modules which are isomorphically related to $\ell^2(\Gamma)\otimes \mathcal{H}_1$ and respectively $\ell^2(\Gamma)\otimes \mathcal{H}_2$ with Hilbert spaces $\mathcal{H}_1(=\mathcal{H})$ and $\mathcal{H}_2$.
\begin{figure}[H]
\centering
\includestandalone[width=0.45\textwidth]{pictures/gamcompdia}
\caption{Depiction of the commutative diagrams for \clef{commdiagammabound}.}\label{diagramisomorphismshilbertmod}
\end{figure}
\clef{vNcommfree} indicates that an operator $A \in \mathscr{B}_{\Gamma}(\mathscr{H})$ corresponds to an operator of the form $\Iop{\ell^2(\Gamma)}\otimes \underline{A}$ with $\underline{A}\in \mathscr{B}(\mathcal{H})$. As one can view any operator from $\mathscr{H}_1$ to $\mathscr{H}_2$ as an operator on the direct sum $\mathscr{H}_1\oplus\mathscr{H}_2$, we have in fact
\begin{equation}\label{commdiagammabound}
\mathscr{B}_\Gamma(\mathscr{H}_1,\mathscr{H}_2) \cong \mathcal{N}_{r}(\Gamma)\otimes\mathscr{B}(\mathcal{H}_1,\mathcal{H}_2) \quad.
\end{equation}
\clef{vNcommfree} and thus \clef{commdiagammabound} carry over to something similar for $\Gamma$-compact operators which is a more practical characterisation\bnote{F11} of this operator class. Following \cite[pp.9-10]{lance1995hilbert}, one can prove 
\begin{equation}\label{gamma1rankiso}
\mathscr{R}_\Gamma(\mathscr{H}) \cong \mathcal{N}_{r}(\Gamma)\otimes\mathscr{R}(\mathcal{H})
\end{equation} 
and with the same reasoning as applied for the $\Gamma$-morphisms they furthermore implicate
\begin{equation}\label{gammaNrankiso}
\mathscr{R}_\Gamma(\mathscr{H}_1,\mathscr{H}_2) \cong \mathcal{N}_{r}(\Gamma)\otimes\mathscr{R}(\mathcal{H}_1,\mathcal{H}_2)\quad.
\end{equation} 
These isomorphisms are preserved under the norm closure with respect to the norm on $\mathscr{B}_\Gamma(\mathscr{H})$ respectively $\mathscr{B}_\Gamma(\mathscr{H}_1,\mathscr{H}_2)$ such that we get
\begin{equation}\label{gammacompiso}
\mathscr{K}_\Gamma(\mathscr{H}) \cong \mathcal{N}_{r}(\Gamma)\otimes\mathscr{K}(\mathcal{H})
\end{equation}
and also
\begin{equation}\label{commdiagammacomp}
\mathscr{K}_\Gamma(\mathscr{H}_1,\mathscr{H}_2) \cong \mathcal{N}_{r}(\Gamma)\otimes\mathscr{K}(\mathcal{H}_1,\mathcal{H}_2) \quad.
\end{equation}
$\Gamma$-Fredholm operators are defined via parametrices. 
\begin{defi}[Definition 3.10.3 in \cite{shub}]\label{defigammafred}
Let $\mathscr{H}_1,\mathscr{H}_2$ be Hilbert $\Gamma$-modules and $A \in \mathscr{B}_\Gamma(\mathscr{H}_1,\mathscr{H}_2)$; $A$ is a $\Gamma$-\textit{Fredholm operator} (denoted with $A \in \mathscr{F}_\Gamma(\mathscr{H}_1,\mathscr{H}_2))$ if there exists a $B\in \mathscr{B}_\Gamma(\mathscr{H}_2,\mathscr{H}_1)$ such that $(\Iop{\mathscr{H}_1}-BA) \in \mathscr{S}^1_\Gamma(\mathscr{H}_1)$ and $(\Iop{\mathscr{H}_2}-AB) \in \mathscr{S}^1_\Gamma(\mathscr{H}_2)$; this $B$ is the $\Gamma$-\textit{Fredholm parametrix} and the $\Gamma$-\textit{index} is defined by the Atiyah-Bott formula
\begin{equation}\label{vNindex}
\Index_\Gamma(A):=\Tr{\Gamma}{\Iop{\mathscr{H}_1}-BA}-\Tr{\Gamma}{\Iop{\mathscr{H}_2}-AB}\quad .
\end{equation}
\end{defi}
The existence of one parametrix in this definition can be replaced with the existence of left- and right-parametrices for $A$ as these parametrices differ in a $\Gamma$-trace class operator. $\Gamma$-Fredholm operators and the $\Gamma$-index have almost the same properties as Fredholm operators in the ordinary Hilbert space setting apart from the following facts.

\begin{prop}[Section 3.10 in \cite{shub}]\label{propgammafred}
\begin{itemize}
\item[]
\item[(1)] $\Index_\Gamma(A)\,:\,\mathscr{F}_\Gamma(\mathscr{H}_1,\mathscr{H}_2)\,\rightarrow\,\R$ is locally constant.
\item[(2)] If $A \in \mathscr{F}_\Gamma(\mathscr{H}_1,\mathscr{H}_2)$, then $\dim_\Gamma\kernel{A} <\infty$ and $\dim_\Gamma\kernel{A^\ast} <\infty$ such that $\Index_\Gamma(A)=\dim_\Gamma\kernel{A}-\dim_\Gamma\kernel{A^\ast}$.
\item[(3)] $A \in \mathscr{F}_\Gamma(\mathscr{H}_1,\mathscr{H}_2)$ if and only if $\dim_\Gamma\kernel{A} <\infty$ and there exists a closed set $W \in \mathscr{H}_2$ such that $W \subset \range{A}$ and $\codim_\Gamma(W):=\dim_\Gamma\left(\quotspace{\mathscr{H}_2}{W}\right)<\infty$.
\end{itemize} 
\end{prop}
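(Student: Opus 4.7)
I tackle part (2) first, then (3), and leave the local constancy (1)---the most delicate item---for last.

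For (2), let $A \in \mathscr{F}_\Gamma(\mathscr{H}_1,\mathscr{H}_2)$ with $\Gamma$-Fredholm parametrix $B$. On $\kernel{A}$ the operator $\Iop{\mathscr{H}_1} - BA$ acts as the identity, so $\kernel{A}$ is contained in the spectral subspace of $\Iop{\mathscr{H}_1} - BA \in \mathscr{S}^1_\Gamma(\mathscr{H}_1)$ at eigenvalue~$1$. The $\Gamma$-version of the spectral theorem for trace class operators---whose nonzero spectral projections have $\Gamma$-finite-dimensional range---forces $\dim_\Gamma \kernel{A} < \infty$. Taking adjoints in the second parametrix identity yields $\Iop{\mathscr{H}_2} - B^\ast A^\ast \in \mathscr{S}^1_\Gamma(\mathscr{H}_2)$, and the same argument gives $\dim_\Gamma \kernel{A^\ast} < \infty$. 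For the index formula I would first establish closedness of $\range{A}$: since $AB = \Iop{\mathscr{H}_2} - K_2$ with $K_2 \in \mathscr{S}^1_\Gamma \subset \mathscr{K}_\Gamma$, $\range{AB} \subset \range{A}$ is closed of $\Gamma$-finite codimension in $\mathscr{H}_2$, which upgrades to closedness of $\range{A}$ itself. Then \Cref{helpinglemma1} and \Cref{helpinglemma2} yield the orthogonal decompositions $\mathscr{H}_1 = \kernel{A} \oplus (\kernel{A})^\perp$ and $\mathscr{H}_2 = \kernel{A^\ast} \oplus \range{A}$ as projective Hilbert $\Gamma$-modules. For the \emph{canonical} parametrix $B_0$ (the $\Gamma$-inverse of $A\vert_{(\kernel{A})^\perp} \to \range{A}$ extended by zero on $\kernel{A^\ast}$), $\Iop{\mathscr{H}_1} - B_0 A$ and $\Iop{\mathscr{H}_2} - A B_0$ are the orthogonal $\Gamma$-projections onto $\kernel{A}$ and $\kernel{A^\ast}$, whose $\Gamma$-traces equal $\dim_\Gamma \kernel{A}$ and $\dim_\Gamma \kernel{A^\ast}$ by \clef{gammadimdefi}. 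Independence of the formula from the chosen parametrix follows from cyclicity of $\Tr{\Gamma}{\cdot}$ on $\mathscr{S}^1_\Gamma$ (\Cref{remsgammaideals}(ii)).

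For (3), the forward direction is (2) with $W := \range{A}$. For the converse, a closed subspace $W \subset \range{A}$ of $\Gamma$-finite codimension in $\mathscr{H}_2$ forces $\range{A}$ to be closed as well (one moves from $W$ to $\range{A}$ through finitely many $\Gamma$-finite-dimensional closed extensions, each preserving closedness). Hence $A$ descends to a bijective $\Gamma$-morphism $\tilde A : (\kernel{A})^\perp \to \range{A}$; the open mapping theorem provides a bounded inverse $\tilde A^{-1}$, which automatically intertwines the $\Gamma$-actions since $\tilde A$ does. Extending $\tilde A^{-1}$ by zero on $(\range{A})^\perp$ produces a parametrix $B$ for which $\Iop{\mathscr{H}_1} - BA$ and $\Iop{\mathscr{H}_2} - AB$ are the orthogonal projections onto the $\Gamma$-finite-dimensional spaces $\kernel{A}$ and $(\range{A})^\perp \cong \mathscr{H}_2 / \range{A}$, hence lie in $\mathscr{R}_\Gamma \subset \mathscr{S}^1_\Gamma$.

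The main obstacle is (1). Unlike the classical case, $\Index_\Gamma$ takes values in $\R$, so local constancy cannot be reduced to continuity of an integer-valued map. For $A$ with parametrix $B$ and $S \in \mathscr{B}_\Gamma(\mathscr{H}_1,\mathscr{H}_2)$ of small operator norm, I would perturb $B$ along with $A$. Writing $B(A+S) = \Iop{\mathscr{H}_1} - K_1 + BS$ with $K_1 = \Iop{\mathscr{H}_1} - BA \in \mathscr{S}^1_\Gamma$, choose $\|S\|$ so small that $\Iop{\mathscr{H}_1} + BS$ is invertible by Neumann series in $\mathscr{B}_\Gamma(\mathscr{H}_1)$ and set $B' := (\Iop{\mathscr{H}_1} + BS)^{-1} B$, so that
\begin{equation*}
B'(A+S) = \Iop{\mathscr{H}_1} - (\Iop{\mathscr{H}_1} + BS)^{-1} K_1 \in \Iop{\mathscr{H}_1} + \mathscr{S}^1_\Gamma(\mathscr{H}_1)
\end{equation*}
by the two-sided ideal property of $\mathscr{S}^1_\Gamma$ (\Cref{remsgammaideals}(i)). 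A symmetric computation yields $(A+S)B' \in \Iop{\mathscr{H}_2} + \mathscr{S}^1_\Gamma$, so $A+S \in \mathscr{F}_\Gamma$. To match the indices, I expand $(\Iop{\mathscr{H}_1} + BS)^{-1}$ and $(\Iop{\mathscr{H}_2} + SB)^{-1}$ in Neumann series and apply cyclicity of $\Tr{\Gamma}{\cdot}$ to pair the terms containing $BS$ with those containing $SB$; each extra power of $S$ should then contribute equal summands to $\Tr{\Gamma}{\Iop{\mathscr{H}_1} - B'(A+S)}$ and $\Tr{\Gamma}{\Iop{\mathscr{H}_2} - (A+S)B'}$, so that the index of $A+S$ collapses to that of $A$. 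The delicate step is verifying that every intermediate operator in these rearrangements lies in $\mathscr{S}^1_\Gamma$ (or is obtained from it by multiplication with an operator in $\mathscr{B}_\Gamma$) so that cyclicity is actually applicable; this is the point at which I expect the technical bookkeeping to be heaviest.
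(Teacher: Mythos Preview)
The paper does not supply a proof of this proposition---it is cited from \cite{shub}---so there is no argument to compare against. Assessing your proposal on its own: there is a genuine gap in (2) and (3), namely the repeated assumption that $\range{A}$ is (or can be made) closed. This is exactly what fails in Breuer--Fredholm theory and is why statement (3) is phrased via an auxiliary closed $W\subset\range{A}$ rather than via $\range{A}$ itself. Your claim that $\range{\Iop{}-K_2}$ is closed for $K_2\in\mathscr{S}^1_\Gamma$ is false: take $\Gamma$ infinite abelian, $\mathscr{H}=\ell^2(\Gamma)\otimes\mathcal{H}$, and $K_2=k\otimes P$ with $P$ a rank-one projection on $\mathcal{H}$ and $k\in\mathscr{N}_r(\Gamma)$ a positive element whose spectrum is $[0,1]$; then $K_2\in\mathscr{S}^1_\Gamma$, yet $\Iop{}-K_2$ restricted to $\ell^2(\Gamma)\otimes\range{P}$ is multiplication by $1-k$, which has dense non-closed range. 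Hence your canonical parametrix $B_0$ need not be bounded, and the step ``$W$ closed of $\Gamma$-finite codimension $\Rightarrow$ $\range{A}$ closed'' in (3) imports a classical fact about ordinary finite codimension with no $\Gamma$-analogue.

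Your argument that $\dim_\Gamma\kernel{A}<\infty$ is fine (more directly: $P_{\kernel{A}}=(\Iop{\mathscr{H}_1}-BA)P_{\kernel{A}}\in\mathscr{S}^1_\Gamma$ by the ideal property), and the Neumann-series route to (1) is standard. The index formula in (2) and both directions of (3) must be redone without closed range; the usual device is a spectral cutoff, taking $W=A\big(\range{\Chi_{[\epsilon,\infty)}(A^\ast A)}\big)$ for small $\epsilon>0$ as the closed subspace in (3) and reaching the index identity in (2) through the corresponding truncated inverses rather than by inverting $A$ on $(\kernel{A})^\perp$ outright.
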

\begin{rem}\label{remsfredprop}
It is required in \Cref{defigammafred} that the remainders are $\Gamma$-trace class operators such that the $\Gamma$-index is well-defined. However, it is sufficient if we replace the ideal of $\Gamma$-trace class operators with $\Gamma$-compact operators and thus any $\Gamma$-ideal in \Cref{gammaideals}. The $\Gamma$-index is then defined with \Cref{propgammafred} (2).
\end{rem}

\section{Sobolev spaces and operators on $\Gamma$-manifolds, $\Gamma$-Projections}\label{chap:gammasob}

We combine the geometric and functional analytic facts from the last section and consider operators which act between sections of $\Gamma$-vector bundles over $\Gamma$-manifolds. We will introduce $L^2$- and Sobolev spaces in this setting which turn out to be free Hilbert $\Gamma$-modules. After this, we take a closer look on spectral projections of certain operators and illustrate a $\Gamma$-version of Seeley's theorem of complex powers.

\subsection{Differential operators in $\Gamma$-setting}\label{chap:specflow-sec:gammaproj}

Let $E\rightarrow M$ be a $\Gamma$-vector bundle over a $\Gamma$-manifold $M$. The left action representation of $\Gamma$ on a smooth section $u$ of $E$ is then described by
\begin{equation}\label{gammaactionsection}
(L^E_\gamma u)(p)=(\pi^E_\Gamma u)(\gamma^{-1}p) \quad
\end{equation}
where $\pi^E_\Gamma$ is an isometry which maps the section at $p$ to $(\gamma\cdot p)$. We equip $M$ with a $\Gamma$-invariant smooth density $\differ \mu$, which is either defined by a $\Gamma$-invariant pseudo-Riemannian metric or otherwise, and $E$ with a $\Gamma$-invariant inner product $\idscal{1}{E_{p}}{\cdot}{\cdot}$ on each fiber $E_p$. We introduce the inner product

\begin{equation}\label{gammainnerprod}
\dscal{1}{L^2(M,E)}{u}{v}=\int_M \dscal{1}{E_{p}}{u(p)}{v(p)} \differ \mu(p)\quad.
\end{equation}
The norm closure of $C^\infty_\comp(M,E)$ with respect to this inner product is the space of square-integrable sections of $E$ on $M$ which we also denote with $L^2(M,E)$. The $\Gamma$-invariance of the density and the bundle metric ensure that the left action representation \clef{gammaactionsection} is unitary with respect to \clef{gammainnerprod}. Other function spaces of interest next to $L^2(M,E)$ are Sobolev spaces. We recall from \cite[Sec.3.9]{shub} that the appropriate norm is defined with a $\Gamma$-invariant partition of unity \clef{gammainvpartition}:
\begin{equation}\label{soboloevgammanorm}
\norm{u}{H^s_\Gamma(M,E)}^2:=\sum_{\substack{j \in J\\ \gamma \in \Gamma}}\norm{\phi_{j,\gamma}u}{H^s(\supp{\phi_{j,\gamma}},E)}^2\quad .
\end{equation} 
The corresponding \textit{$\Gamma$-Sobolev spaces} are then defined for any $s\in \R$ via
\begin{equation}\label{sobolevgamma}
H^s_\Gamma(M,E):=\SET{u\in H^s_\loc(M,E)\,\Big\vert\,\norm{u}{H^s_\Gamma(M,E)} < \infty}\quad.
\end{equation}
We note from \cite{shub} that the definition does not depend on the choice of the $\Gamma$-invariant partition of unity and the choice of the discrete group; they are a priori projective Hilbert $\Gamma$-modules. We also want to point out the resemblance of the $\Gamma$-Sobolev spaces with the Sobolev spaces from \clef{sobolevboundedgeonorm} on manifolds with bounded geometry. \\ 
\\
Let $\mathcal{F}$ be the fundamental domain of the $\Gamma$-action. Because $M$ is a principal $\Gamma$-bundle, the action of the group on the $\Gamma$-manifold becomes isomorphically related to an action on $\Gamma\times \mathcal{F}$. This induces a unitary isomorphism between $L^2$-sections,
\begin{equation}\label{l2freemoduleI}
L^2(M,E) \cong \ell^2(\Gamma)\otimes L^2(\mathcal{F},E\vert_\mathcal{F}) \quad,
\end{equation}
which is given by
\begin{equation}\label{unitarymapl2}
u\,\mapsto\,\sum_{\gamma \in \Gamma} \delta_\gamma \otimes (L^E_\gamma)^\ast u\vert_{\mathcal{F}} \quad.
\end{equation}
for a $u \in L^2(M,E)$. The left action representation $L^E_\gamma$ becomes $l_\gamma \otimes \Iop{\mathcal{F}}$ on $\ell^2(\Gamma)\otimes L^2(\mathcal{F},E\vert_\mathcal{F})$. Furthermore, \clef{l2freemoduleI} implies that $L^2(M,E)$ is a free Hilbert $\Gamma$-module and hence any closed $\Gamma$-invariant subset of $L^2(M,E)$ becomes a projective Hilbert $\Gamma$-module. We recall \Cref{remfun} and replace $L^2(\mathcal{F},E\vert_\mathcal{F})$ with $L^2(\overline{\mathcal{F}},E\vert_{\overline{\mathcal{F}}})=L^2(M_\Gamma,E_\Gamma)$ with $E_\Gamma$ as vector bundle over $M_\Gamma$ since $\mathcal{F}$ and $M_\Gamma$ differ in a set of measure zero and, due to the $\Gamma$-invariance of the density on $M$, the densities on $M$ and $\mathcal{F}$ coincide such that \clef{l2freemoduleI} becomes
\begin{equation}\label{l2freemoduleII}
L^2(M,E) \cong \ell^2(\Gamma)\otimes L^2(M_\Gamma,E_\Gamma) \quad.
\end{equation}
The von Neumann algebra of operators in $\mathscr{B}_\Gamma(L^2(M,E))$ is naturally isomorphic to $\mathscr{N}_r(\Gamma)\otimes \mathscr{B}(L^2(M_\Gamma,E_\Gamma))$.\\
\\
Given two  $\Gamma$-vector bundles $E'\rightarrow M'$ and $E\rightarrow M$ over $\Gamma$-manifolds $M,M'$ and an operator $A:C^\infty_\comp(M',E')\rightarrow C^{-\infty}(M,E)$ with Schwartz-Kernel $K_A$. $A \in \mathscr{B}_\Gamma(L^2(M',F),L^2(M,E))$ implies that the Schwartz kernel is $\Gamma$-equivariant and becomes a distribution on the compact orbit $(M \times M')/\Gamma$ under the diagonal action of $\Gamma$. If we take an operator $A \in \mathscr{S}^1_\Gamma(L^2(M,E))$, the Schwartz-Kernel satisfies $K_A \in L^2(M\times M,E\otimes E^\ast)$. Suppose for simplicity that there are two operators $C\in \mathscr{S}^2_\Gamma(L^2(M,E))$ and $B\in \mathscr{S}^2_\Gamma(L^2(M,E))$ with Schwartz kernels $K_C$ and $K_B$ such that $A=B\circ C$. The analytic expression of the $\Gamma$-trace for $A$ in terms of these Schwartz kernels becomes  
\begin{equation}\label{analyticgammatraceI}
\Tr{\Gamma}{A}=\int_{\mathcal{F}\times M} \tr{E_p}{K_B(p,q)\circ K_C(q,p)} \differ\mu_{M\times M}(p,q) 
\end{equation}
with $\tr{E_p}{\cdot}$ as fibrewise $\Gamma$-invariant trace along $E_p, p \in \mathcal{F}$, and $\differ\mu_{M\times M}$ $\Gamma$-invariant density on $M\times M$. If the Schwartz kernel of $A$ is already continuous at the diagonal, the Schwartz kernels of $B$ and $C$ do as well and vice versa such that the inner integral can be performed, giving $K_A$ and consequently
\begin{equation}\label{analyticgammatraceII}
\Tr{\Gamma}{A}=\int_{\mathcal{F}} \tr{E_p}{K_A(p,p)} \differ\mu_{M}(p) \quad.
\end{equation}
We want to consider differential operators, pseudo-differential and Fourier integral operators in the $\Gamma$-setting. We denote with $\Diff{m}{\Gamma}(M,\Hom(E,F))$, $\ydo{s}{\Gamma}(M,\Hom(E,F))$, and $\FIO{s}_{\Gamma}(M,N;\mathsf{C}';\Hom(E,G))$ the sets of $\Gamma$-differential, $\Gamma$-pseudo-differential and $\Gamma$-Fourier integral operators where $M,N$ are $\Gamma$-manifolds, $E,F\rightarrow M$ and $G\rightarrow N$ are $\Gamma$-vector bundles, $\mathsf{C}$ a canonical relation, and $m\in \N_0$, $s\in\R$. These are in general unbounded operators. If an operator in one of these operator spaces enjoys the property of being properly supported, its Schwartz kernel is compactly supported with support in $(M\times M)/\Gamma$ respectively $(M\times N)/\Gamma$. Thus, any $\Gamma$-invariant differential operator has compactly supported Schwartz kernel on the orbit space.
A certain subclass of $\Gamma$-invariant pseudo-differential operators are \textit{classical} operators, denoted with $\ydo{m}{\mathsf{cl},\Gamma}(M,\Hom(E,F))$, if an $A\in \ydo{m}{\Gamma}(M,\Hom(E,F))$ can be decomposed into a sum of a $\Gamma$-invariant operator with classical symbol and a smoothing $\Gamma$-invariant remainder.
We collect some known results.
\begin{prop}[Corollary 3.9.2, Theorem 3.9.1/4(a) in \cite{shub}]\label{propgammapseudoprop}
\begin{itemize}
\item[]
\item[(1)] If $A\in\ydo{m}{\mathsf{prop},\Gamma}(M,\Hom(E,F))$, then $A \in \mathscr{B}_\Gamma(H^s_\Gamma(M,E),H^{s-m}_\Gamma(M,F))$.
\item[(2)] If $A\in\mathscr{B}_\Gamma(L^2(M,E),L^2(M,F))$ and $A\in\ydo{m}{\mathsf{prop},\Gamma}(M,\Hom(E,F))$ with \\$m< - \dim(M)$, then $A\in \mathscr{S}^1_\Gamma(L^2(M,E),L^2(M,F))$.
\end{itemize}
\end{prop}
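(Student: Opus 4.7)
The plan for (1) is to reduce $\Gamma$-Sobolev continuity of $A$ to a uniformly controlled sum of local Sobolev estimates along the $\Gamma$-invariant partition of unity. I would take the partition $\{\phi_{j,\gamma}\}_{j \in J,\,\gamma \in \Gamma}$ from \eqref{gammainvpartition} satisfying $\phi_{j,\gamma}(p)=\phi_{j,\upepsilon}(\gamma^{-1}p)$ and expand
\begin{equation*}
\norm{Au}{H^{s-m}_\Gamma(M,F)}^2 = \sum_{j,\gamma} \norm{\phi_{j,\gamma} A u}{H^{s-m}(\supp \phi_{j,\gamma},F)}^2.
\end{equation*}
Since $A$ is properly supported, only finitely many cutoffs $\phi_{k,\delta}$ satisfy $\supp \phi_{j,\gamma}\cap A(\supp \phi_{k,\delta}) \neq \emptyset$, with the cardinality of this index set $F(j,\gamma)$ uniformly bounded in $(j,\gamma)$. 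Decomposing $\phi_{j,\gamma} A u = \sum_{(k,\delta)\in F(j,\gamma)} \phi_{j,\gamma} A \phi_{k,\delta} u$ and applying the classical local Sobolev continuity of pseudo-differential operators on a chart around $\supp \phi_{j,\gamma}$ yields $\norm{\phi_{j,\gamma} A \phi_{k,\delta} u}{H^{s-m}} \le C_{j,k,\gamma,\delta} \norm{\phi_{k,\delta} u}{H^s}$. The crucial step is to use the $\Gamma$-equivariance of $A$ together with the equivariance of the partition to conjugate each local operator by $L_\gamma^E, L_\delta^F$ back to $\gamma = \delta = \upepsilon$; this collapses the a priori infinite collection of local constants to the finite family $\{C_{j,k}\}_{j,k \in J}$. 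Summing, and noting that each $\phi_{k,\delta}u$ appears in only finitely many terms on the right hand side, yields $\norm{Au}{H^{s-m}_\Gamma} \le C \norm{u}{H^s_\Gamma}$. The intertwining $A L^E_\gamma = L^F_\gamma A$ is part of the definition of $\ydo{m}{\Gamma}$, so $A \in \mathscr{B}_\Gamma(H^s_\Gamma, H^{s-m}_\Gamma)$.

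For part (2), the plan is to factor $A = B \circ C$ into two properly supported $\Gamma$-invariant pseudo-differential operators $B,C$ of order $m/2 < -\dim(M)/2$, then apply \Cref{remsgammaideals} (ii): it suffices to show that both $B$ and $C$ are $\Gamma$-Hilbert-Schmidt. For $B$, I would compute $\Tr{\Gamma}{B^\ast B}$: since $B^\ast B$ is properly supported and $\Gamma$-invariant of order $m < -\dim(M)$, its Schwartz kernel is continuous along the diagonal by the local Sobolev embedding $H^{-m}_\loc \hookrightarrow C^0$, and \eqref{analyticgammatraceII} applies:
\begin{equation*}
\Tr{\Gamma}{B^\ast B} = \int_\mathcal{F} \tr{E_p}{K_{B^\ast B}(p,p)}\, \differ \mu(p) < \infty,
\end{equation*}
the integrand being continuous on the compact closure $\overline{\mathcal{F}} = M_\Gamma$. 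The same argument applies to $C$, so $A = B\circ C \in \mathscr{S}^1_\Gamma$, and part (1) already ensures that $B,C \in \mathscr{B}_\Gamma$ between the correct free Hilbert $\Gamma$-modules.

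The main obstacle in (1) is establishing uniformity of the local Sobolev constants across all $\gamma,\delta \in \Gamma$; without the $\Gamma$-invariance of $A$ paired with the equivariance of the partition, one cannot collapse the infinite collection of local pseudo-differential pieces to a finite family supported on charts of the compact base $M_\Gamma$. In (2), the delicate point is producing a $\Gamma$-invariant and properly supported factorisation $A = B \circ C$ of the desired orders; this rests on a $\Gamma$-version of Seeley's theorem for complex powers of elliptic $\Gamma$-invariant pseudo-differential operators, which is precisely the auxiliary result announced in the introduction to this section and used as a black box here.
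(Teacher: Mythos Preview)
The paper does not prove this proposition; it is quoted from \cite{shub} without argument. Your sketch for (1) is the standard proof and is correct: proper support together with $\Gamma$-invariance of $A$ lets you conjugate each localized piece $\phi_{j,\gamma} A \phi_{k,\delta}$ by $L_\gamma$ back to a fixed finite family of operators supported over charts of the compact base $M_\Gamma$, which is exactly what forces the local Sobolev constants to be uniform.

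For (2) your strategy is also correct, but invoking a $\Gamma$-Seeley theorem for the factorisation $A=B\circ C$ is heavier than necessary and, in the logical order of this paper, risks circularity: \Cref{kordyuresultsgamma} sits downstream of \Cref{propgammapseudoprop} and of the s-regular calculus in \Cref{remarkssmoothing}. A cleaner route is to take any elliptic, properly supported, $\Gamma$-invariant $\Psi$DO $\Lambda$ of order $-m/2>0$ (say the lift of a positive Laplace-type operator on $M_\Gamma$, or a direct quantisation of $(1+|\xi|^2)^{-m/4}$) with a properly supported $\Gamma$-invariant parametrix $\Lambda^{-1}$ of order $m/2$, so that $\Lambda^{-1}\Lambda=\mathds{1}+R$ with $R$ properly supported and smoothing. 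Then
\[
A=\Lambda^{-1}\circ(\Lambda A)-R A,
\]
where $\Lambda^{-1}$ and $\Lambda A$ are both properly supported $\Gamma$-$\Psi$DOs of order $m/2<-\dim(M)/2$, and $RA$ has smooth Schwartz kernel with compact support on $(M\times M)/\Gamma$, hence is directly $\Gamma$-trace class. That each factor of order $<-\dim(M)/2$ is $\Gamma$-Hilbert--Schmidt follows exactly as you wrote: $B^\ast B$ has order $<-\dim(M)$, hence continuous kernel on the diagonal, and \eqref{analyticgammatraceII} over the relatively compact $\mathcal{F}$ gives a finite $\Gamma$-trace. No complex powers are needed.
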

The requirement of being properly supported is always fulfilled for differential operators, but too restrictive for a general operator class. We first introduce a wider set of $\Gamma$-operators such that they are smoothing in the sense that they map $\Gamma$-Sobolev spaces into $\Gamma$-Sobolev spaces.
\begin{defi}\label{ssmoothing}
Given $E\rightarrow M$ and $F\rightarrow M'$ $\Gamma$-vector bundles over $\Gamma$-manifolds $M,M'$ with respect to the same $\Gamma$-action and let $A\,:\,C^\infty_\comp(M,E)\,\rightarrow\,C^{-\infty}(M',F)$ be a $\Gamma$-operator; $A$ is said to be \textit{s-smoothing} if it extends to a continuous linear operator between $\Gamma$-Sobolev spaces for any orders $r,p$: 
\begin{equation*}
A\,:\,H^r_\Gamma(M,E)\,\rightarrow\,H^p_\Gamma(M',F)\quad.
\end{equation*} 
\end{defi}
This is a slight generalisation of s-smoothing operators, introduced in \cite[Sec.3.11]{shub}; if $M=M'$ and $A$ a pseudo-differential operator, we write $A\in S\ydo{-\infty}{\Gamma}(M,\Hom(E,F))$. It is proven in \cite[Lem. 3.11.1/2]{shub} that any element in $S\ydo{-\infty}{\Gamma}(M,\Hom(E,F))$ has smooth Schwartz kernel on $M\times M$ and is a $\Gamma$-trace class operator. This new kind of operators motivates another important class of operators.
\begin{defi}[Definition 3.11.2 in \cite{shub}]
Let $E,F$ be $\Gamma$-vector bundles over the $\Gamma$-manifold $M$; an operator $A \in \ydo{m}{\Gamma}(M,\Hom(E,F))$ is called \textit{s-regular} if there is an operator $\hat{A}\in \ydo{m}{\Gamma,\mathsf{prop}}(M,\Hom(E,F))$ such that $(A-\hat{A}) \in S\ydo{-\infty}{\Gamma}(M,\Hom(E,F))$. If $\hat{A}\in \ydo{m}{\mathsf{cl},\Gamma}(M,\Hom(E,F))$ and is properly supported, they are defined as \textit{classical s-regular} operators.
\end{defi}
We denote these spaces of operators with $S\ydo{m}{\Gamma}(M,\Hom(E,F))$ and $S\ydo{m}{\mathsf{cl},\Gamma}(M,\Hom(E,F))$ respectively. We also introduce the notation $S\FIO{m}_\Gamma$ for $m\in \R$ to stress that an operator is the sum of a properly supported $\Gamma$-invariant Fourier integral operator and a s-smoothing remainder. We list some important properties. 
\begin{prop}[Corollary 3.11.1/5, Lemma 3.11.3, Proposition 3.11.2/3/4, Theorem 3.11.2 in \cite{shub}]\label{remarkssmoothing}
Let $E,F\rightarrow M$ be $\Gamma$-vector bundles over the $\Gamma$-manifold $M$ and let $A\,:\,C^\infty_\comp(M,E)\,\rightarrow\,C^{-\infty}(M,F)$ be an $\Gamma$-operator.
\begin{itemize}
\item[(1)] $\Diff{m}{\Gamma}(M,\Hom(E,F))\subset \ydo{m}{\mathsf{prop},\Gamma}(M,\Hom(E,F)) \subset S\ydo{m}{\Gamma}(M,\Hom(E,F))$. 
\item[(2)] $S\ydo{-\infty}{\Gamma}(M,\Hom(E,F))=\bigcap_{m \in \R}S\ydo{m}{\Gamma}(M,\Hom(E,F))$. 
\item[(3)] If $A\in\mathscr{B}_\Gamma(L^2(M,E),L^2(M,F))$ and $A\in S\ydo{m}{\Gamma}(M,\Hom(E,F))$ with \\$m< -\dim(M)$, then $A\in \mathscr{S}^1_\Gamma(L^2(M,E),L^2(M,F))$.
\item[(4)] If $A\in S\ydo{m}{\Gamma}(M,\Hom(E,F))$, then $A\in\mathscr{B}_\Gamma(H^s_\Gamma(M,E),H^{s-m}_\Gamma(M,F))$ for all $s\in \R$.
\item[(5)] For any $s\in \R$ there exists a formally self-adjoint operator $\Lambda^s\in S\ydo{s}{\mathsf{cl},\Gamma}(M,E)$ which is elliptic and maps from $H^s_\Gamma(M,E)\rightarrow L^2(M,F)$ as a topological isomorphism of Hilbert $\Gamma$-modules.
\item[(6)] If $A \in S\ydo{m}{\Gamma}(M,E)$ is elliptic with $m>0$ and formally self-adjoint, then $A$ is essentially self-adjoint and $\Chi_I(A)\in S\ydo{-\infty}{\Gamma}(M,E)$ for any bounded Borel set $I \subset \R$.
\item[(7)] If $E,F$ are Hermitian $\Gamma$-vector bundles and $A \in S\ydo{m}{\mathsf{cl},\Gamma}(M,\Hom(F,E))$ elliptic, then $A\in \mathscr{F}_\Gamma(H^s_\Gamma(M,F),H^{s-m}_\Gamma(M,E))$ for any $s\in \R$ with an $s$-independent $\Gamma$-index
\vspace*{-1em}
\begin{equation*}
\Index_\Gamma(A)=\dim_\Gamma\kernel{A}-\dim_\Gamma\kernel{A^{\ast}}
\end{equation*}
where $A^{\ast}\in S\ydo{m}{\mathsf{cl},\Gamma}(M,\Hom(E,F))$ is the formal adjoint of $A$ with respect to \clef{gammainnerprod}.
\end{itemize}
\end{prop}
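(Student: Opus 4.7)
The strategy is to reduce each of the seven assertions either to the corresponding statement for properly supported $\Gamma$-pseudo-differential operators (\Cref{propgammapseudoprop}) or to the explicit Schwartz-kernel structure of s-smoothing operators, exploiting the defining decomposition $A = \hat{A} + R$ with $\hat{A} \in \ydo{m}{\mathsf{prop},\Gamma}$ and $R \in S\ydo{-\infty}{\Gamma}$.

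Items (1) and (2) are essentially tautological: a differential operator is trivially properly supported, a properly supported operator sits inside $S\ydo{m}{\Gamma}$ via the zero remainder, and an element of $\bigcap_m S\ydo{m}{\Gamma}$ must split with $\hat{A}$ of every negative order, hence smoothing, together with $R$ smoothing by assumption. For items (3) and (4), I would split $A = \hat{A} + R$ and apply \Cref{propgammapseudoprop} to $\hat{A}$; the remainder $R$ is bounded between arbitrary $\Gamma$-Sobolev spaces by the very definition of s-smoothing, and the $\Gamma$-trace class property follows from its smooth $\Gamma$-equivariant Schwartz kernel via the fundamental-domain formula \clef{analyticgammatraceII}.

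For (5), I would construct $\Lambda^s$ as a complex power of a $\Gamma$-invariant positive elliptic self-adjoint operator, e.g. $(1+\Delta_E)^{s/2}$ for the Bochner Laplacian $\Delta_E$ built from a $\Gamma$-invariant connection and bundle metric. Seeley's contour-integral representation of complex powers yields a classical pseudo-differential operator of order $s$, and $\Gamma$-invariance of all ingredients is preserved at every stage of the construction. Equivalence of $\norm{u}{H^s_\Gamma}$ and $\norm{\Lambda^s u}{L^2}$ then follows from ellipticity and gives the topological isomorphism of free Hilbert $\Gamma$-modules via \Cref{prophilbertgammamodules}. For (6), essential self-adjointness of $A$ on $C^\infty_\comp(M,E)$ is a consequence of completeness of $M$ (bounded geometry; cf.\ \Cref{remfun}) combined with the ellipticity of $A$ of positive order; I would then express $\Chi_I(A)$ through a Helffer--Sjöstrand almost-analytic contour integral of resolvent powers $(A-z)^{-N}$, note that each such power lies in $S\ydo{-Nm}{\Gamma}$ by the $\Gamma$-pseudo-differential calculus, and let $N$ grow to obtain $\Chi_I(A) \in \bigcap_k S\ydo{-k}{\Gamma} = S\ydo{-\infty}{\Gamma}$ by (2). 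For (7), the standard elliptic parametrix construction, symbol-inverting and iterated modulo lower order, yields a $B \in S\ydo{-m}{\mathsf{cl},\Gamma}(M,\Hom(F,E))$ with $\Iop{H^s_\Gamma} - BA$ and $\Iop{H^{s-m}_\Gamma} - AB$ in $S\ydo{-\infty}{\Gamma}$, hence $\Gamma$-trace class by (3); this provides a $\Gamma$-Fredholm parametrix in the sense of \Cref{defigammafred}, the formula $\Index_\Gamma(A) = \dim_\Gamma \kernel{A} - \dim_\Gamma \kernel{A^\ast}$ follows from \Cref{propgammafred}(2), and $s$-independence follows because elliptic regularity identifies $\kernel{A}$ and $\kernel{A^\ast}$ across all Sobolev orders.

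The main obstacle is item (6): the interaction of the abstract spectral functional calculus for the self-adjoint extension of $A$ with the $S\ydo{\bullet}{\Gamma}$-calculus is delicate, since it requires uniform symbolic control on $(A-z)^{-1}$ in a strip around the spectrum and Schwartz-type decay estimates on resolvent kernels near the diagonal inside a fundamental domain. This is precisely the heart of the $\Gamma$-version of Seeley's theorem announced in Section \ref{chap:gammasob} and is what distinguishes the analysis in the $\Gamma$-invariant, non-compact setting from the compact case, where compactness of $M$ supplies these estimates for free.
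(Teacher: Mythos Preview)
The paper does not give its own proof of this proposition; it is a compilation of results quoted from Shubin's monograph \cite{shub}, with precise internal citations (Corollary~3.11.1/5, Lemma~3.11.3, Proposition~3.11.2/3/4, Theorem~3.11.2) given in lieu of proof. There is therefore no argument in the paper to compare your sketch against.

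That said, your outline for items (1)--(5) and (7) is essentially the standard argument one finds in Shubin: reduce to the properly supported part via the defining decomposition, handle the s-smoothing remainder directly through its smooth $\Gamma$-equivariant kernel and \clef{analyticgammatraceII}, build $\Lambda^s$ as a complex power of a $\Gamma$-invariant Bochner Laplacian (the paper develops exactly this in the Appendix via Kordyukov's bounded-geometry version of Seeley's theorem, culminating in \Cref{kordyuresultsgamma}), and construct the elliptic parametrix symbolically.

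There is one genuine slip in your treatment of (6). The Helffer--Sj\"ostrand formula requires $f \in C^\infty$ with an almost-analytic extension, but $\Chi_I$ for a bounded Borel set $I$ is discontinuous, so you cannot feed it into that machinery directly. The standard route in Shubin instead factorises
\[
\Chi_I(A) \;=\; \bigl[\Chi_I(A)\,(1+A^2)^{N}\bigr] \circ (1+A^2)^{-N};
\]
the bracketed factor is a $\Gamma$-morphism on $L^2$ by the Borel functional calculus (since $x \mapsto \Chi_I(x)(1+x^2)^N$ is bounded on $\R$), while $(1+A^2)^{-N} \in S\ydo{-2Nm}{\Gamma}$ by the complex-power result. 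Letting $N$ range over $\N$ shows that $\Chi_I(A)$ maps $H^s_\Gamma$ to $H^{s+2Nm}_\Gamma$ for every $N$, hence lies in $S\ydo{-\infty}{\Gamma}$ by (2). The ``main obstacle'' you correctly flag is therefore not the spectral projector itself but the complex-power input $(1+A^2)^{-N}$; this is indeed the nontrivial analytic step, and it is exactly what the paper isolates and proves in the Appendix via \Cref{kordyuresults} and \Cref{kordyuresultsgamma}.
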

Properties (1) and (7) together imply that elliptic $\Gamma$-differential and elliptic, properly supported $\Gamma$-pseudo-differential operators are $\Gamma$-Fredholm. A generalisation of property (6) to unbounded intervals is presented in the following subsection. (5) implies together with \Cref{prophilbertgammamodules} (1) that the $\Gamma$-Sobolev spaces are in fact \textit{free Hilbert $\Gamma$-modules} for all $s\in \R$.
%

\subsection{Projectors in the $\Gamma$-setting}\label{chap:specflow-sec:gammaproj}

Given an elliptic and self-adjoint operator $A\in \ydo{m}{\Gamma,\mathsf{prop}}(M,E)$ of order $m>0$ which acts between smooth sections of the (Hermitian) $\Gamma$-vector bundle $E$ over the (Riemannian) manifold $M$. The characteristic functions of such an operator can be defined by means of any unbounded functional calculus for self-adjoint operators, but this approach lacks of analytic informations. To bypass this, we rewrite the characteristic functions $x \mapsto\Chi_{(0,\infty)}(x)$ and $x\mapsto\Chi_{(-\infty,0)}(x)$ with the signum-function which in turn can be expressed as quotient $x/\absval{x}$:
\begin{equation}\label{chireps}   
\begin{split}
\Chi_{>0}(x)&:=\Chi_{(0,\infty)}=\frac{1}{2}\left(1+\frac{x}{\absval{x}}-\Chi_{\SET{0}}(x)\right)\\
\Chi_{<0}(x)&:=\Chi_{(-\infty,0)}=\frac{1}{2}\left(1-\frac{x}{\absval{x}}-\Chi_{\SET{0}}(x)\right)\quad.
\end{split}
\end{equation}
The task is to show that the expressions $A\circ \absval{A}^{-1}$ and $\absval{A}^{-1}=(A^\ast\circ A)^{-1/2}=(A^2)^{-1/2}$ in particular are meaningful. If $M$ is closed, Seeley's theorem for complex powers states that under further conditions on $A$ any of its complex powers become meaningful as pseudo-differential operator and their principal symbols can be calculated from the one of $A$. Such a result is known for manifolds of bounded geometry; in \Cref{chap:manbound} we present and explain this version of Seeley's theorem as \Cref{kordyuresults}. We transfer the result to $\Gamma$-manifolds.
\begin{cor}\label{kordyuresultsgamma}
Let $B \in \ydo{m}{\Gamma,\mathsf{prop,cl}}(M,E)$ be an elliptic and positive operator with scalar positive definite principal symbol ${\sigma}_m(B)(p,\xi)$ such that $({\sigma}_m(B)(p,\xi)-\lambda)$ is not vanishing for $\lambda$ in a sector $\Lambda_{\theta}\subset \uprho(B)$ for $\theta \in (0,\uppi/2)$ and $\xi\in \dot{T}_pM$. Then the following holds: 
\begin{itemize}
\item[(1)] there exists a value $r>0$ such that for $\lambda \in \Lambda_{\theta,r}\subset \C$ the operator $B$ becomes sectorial and the resolvent satisfies $\mathsf{R}(B,\lambda)\in S\ydo{-m}{\Gamma}(M,E)$ with principal symbol $({\sigma}_m(B)(p,\xi)-\lambda)^{-1}$; moreover $\mathsf{R}(B,\lambda)$ becomes a bounded operator from $H^s_\Gamma(M,E)$ to $H^{s+m}_\Gamma(M,E)$ for any $s\in \R$.
\item[(2)] $B$ generates a holomorphic semigroup $\expe{-zB}$ for $\Rep{z}>0$ in $L^2(M,E)$.
\item[(3)] $B^z \in S\ydo{m\Rep{z}}{\Gamma}(M,E)$ for all $z\in \C$ with principal symbol \clef{princsymbcomplexpower}.
\end{itemize}
\end{cor}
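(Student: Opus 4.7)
The plan is to bootstrap from the already established Seeley theorem on manifolds of bounded geometry (\Cref{kordyuresults}) and promote each conclusion to the $\Gamma$-equivariant setting. By \Cref{remfun} (ii), any $\Gamma$-manifold is of bounded geometry once it is equipped with a $\Gamma$-invariant Riemannian metric and its associated partition of unity; likewise the $\Gamma$-vector bundle $E$ with its $\Gamma$-invariant bundle metric is of bounded geometry. Hence the hypothesis that $B\in \ydo{m}{\Gamma,\mathsf{prop,cl}}(M,E)$ is elliptic, positive and classical with scalar positive principal symbol, together with the sectoriality condition $({\sigma}_m(B)(p,\xi)-\lambda)\neq 0$ on $\Lambda_\theta\subset\uprho(B)$, places us squarely inside the assumptions of \Cref{kordyuresults}. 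Applying that theorem yields (1)--(3) verbatim but only with the s-regular class $S\ydo{\bullet}{}(M,E)$ of bounded geometry in place of $S\ydo{\bullet}{\Gamma}(M,E)$. It remains to upgrade to the $\Gamma$-invariant subclass.

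First I would address (1). Since $B$ commutes with $L^E_\gamma$ for all $\gamma\in\Gamma$, so does $(B-\lambda\Iop{})$, and consequently the resolvent $\mathsf{R}(B,\lambda)=(B-\lambda\Iop{})^{-1}$ inherits the intertwining property whenever it exists. Combining this with the symbol statement from \Cref{kordyuresults}, one produces a properly supported $\Gamma$-invariant parametrix of $B-\lambda$ by $\Gamma$-averaging a locally supported parametrix against $\phi_{j,\upepsilon}$ and using the uniformity provided by the bounded geometry partition \clef{gammainvpartition}; the difference with $\mathsf{R}(B,\lambda)$ is then s-smoothing and automatically $\Gamma$-invariant, placing $\mathsf{R}(B,\lambda)$ in $S\ydo{-m}{\Gamma}(M,E)$. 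The boundedness on $\Gamma$-Sobolev spaces follows from \Cref{remarkssmoothing} (4).

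For (2) and (3) the strategy is that both $\expe{-zB}$ and $B^z$ can be written as contour integrals of $\mathsf{R}(B,\lambda)$ along a suitably chosen path $\partial \Lambda_{\theta,r}$ in the resolvent set, namely
\begin{equation*}
\expe{-zB}=\frac{1}{2\uppi \Imag}\int_{\partial\Lambda_{\theta,r}}\expe{-z\lambda}\mathsf{R}(B,\lambda)\,\differ\lambda,
\qquad
B^z=\frac{\Imag}{2\uppi}\int_{\partial\Lambda_{\theta,r}}\lambda^{z}\mathsf{R}(B,\lambda)\,\differ\lambda.
\end{equation*}
Since each $\mathsf{R}(B,\lambda)$ commutes with $L^E_\gamma$ and the integral converges in the operator topology on $\mathscr{B}_\Gamma(L^2(M,E))$, the $\Gamma$-invariance passes to the limit. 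The membership $B^z\in S\ydo{m\Rep{z}}{\Gamma}(M,E)$ and the computation of the principal symbol are then inherited from \Cref{kordyuresults} together with the $\Gamma$-invariance just established, and property \Cref{propgammapseudoprop} ensures that the resulting operator sits in the correct mapping class.

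The main obstacle I anticipate is the fine bookkeeping in the first step: one has to ensure that the properly supported $\Gamma$-invariant approximation of the resolvent can be chosen so that the remainder genuinely lies in $S\ydo{-\infty}{\Gamma}(M,E)$ in the sense of \Cref{ssmoothing}, and not merely in the bounded geometry smoothing class. This is handled by using the $\Gamma$-invariant cut-offs $\phi_{j,\gamma}$ and exploiting cocompactness of the $\Gamma$-action to replace the uniform local estimates of bounded geometry by estimates on the compact fundamental domain, after which the $\Gamma$-invariance is automatic.
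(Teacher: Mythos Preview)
Your proposal is correct and follows essentially the same route as the paper: invoke \Cref{kordyuresults} on manifolds of bounded geometry, then verify $\Gamma$-invariance of the resolvent via the elementary algebraic identity $(\lambda-B)^{-1}L^E_\gamma=L^E_\gamma(\lambda-B)^{-1}$, and finally pass $\Gamma$-invariance through the Cauchy integral \clef{cauchyoperator} to $f(B)$.

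The one place where you work harder than necessary is the ``fine bookkeeping'' you flag at the end. The paper does not construct a $\Gamma$-averaged parametrix or revisit the remainder estimates on the fundamental domain; instead, at the close of \Cref{chap:manbound} it records a direct dictionary between the bounded-geometry classes and the $\Gamma$-classes (in particular $\mathsf{B}\ydo{-\infty}{}\leftrightarrow S\ydo{-\infty}{\Gamma}$ and $\mathsf{B}\ydo{m}{}\leftrightarrow S\ydo{m}{\Gamma}$) valid for $\Gamma$-invariant operators. Once the resolvent is known to be $\Gamma$-invariant, the bounded-geometry conclusion $\mathsf{R}(B,\lambda)\in \mathsf{B}\ydo{-m}{}(M,E)$ translates \emph{immediately} to $\mathsf{R}(B,\lambda)\in S\ydo{-m}{\Gamma}(M,E)$, and likewise for $B^z$. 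So your averaging step and the anticipated obstacle can simply be dropped.
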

The third point in this result is thereby an analogue of Seeley's theorem for complex powers in the setting of Galois coverings. It is only left to show that the resolvent commutes with the left action representation if $B$ does: from the commuting $(\lambda-B)L^E_\gamma=L^E_\gamma (\lambda-B)$ for all $\lambda \in \C$ and any $\gamma \in \Gamma$, we can conclude for $\lambda \in \uprho(B)$ the commuting
of the resolvent with the left action representation:

\begin{equation*}
(\lambda-B)^{-1}L^E_\gamma =(\lambda-B)^{-1}L^E_\gamma (\lambda-B)(\lambda-B)^{-1}=L^E_\gamma(\lambda-B)^{-1}\quad.
\end{equation*}
Thus, $f(B)$ in \clef{cauchyoperator} commutes with $L^E_\Gamma$ for all $\gamma$ and allowable functions $f$.\\
\\
\noindent We now come back to the (essentially) self-adjoint and elliptic operator $A\in \ydo{m}{\Gamma,\mathsf{prop,cl}}(M,E)$, $m >0$; the operator $B:=A^\ast\circ A=A^2 \in \ydo{2m}{\Gamma,\mathsf{prop,cl}}(M,E)$ is again self-adjoint and its spectrum lies on the positive half-line. Hence $B$ becomes positive and altogether sectorial. The ellipticity of $B$ is also inherited from the ellipticity of $A$ and the principal symbol of $B$ is positive definite. Under this assumptions, the expression $\absval{A}^{-1}$ becomes well-defined as s-regular pseudo-differential operator of order $(-m)$ and finally one can consider projectors as s-regular $\Gamma$-pseudo-differential operators.
\begin{cor}\label{projsreggenop}
Let $A\in \ydo{m}{\Gamma,\mathsf{prop,cl}}(M,E)$ be elliptic and (essentially) self-adjoint with positive order $m>0$; the spectral projections onto eigenspaces of (the closure of) $A$ in the spectral range $(0,\infty)$
\begin{equation}\label{apsprojectorspos}
P_{>0}(A):=P_{+}(A):=\Chi_{(0,\infty)}(A)=\frac{1}{2}\left(\Iop{}+ A\circ\vert A \vert^{-1} - P_{0}(A)\right)
\end{equation}
and in the spectral range $(-\infty,0)$
\begin{equation}\label{apsprojectorsneg}
P_{<0}(A):=P_{-}(A):=\Chi_{(-\infty,0)}(A)=\frac{1}{2}\left(\Iop{}- A\circ\vert A \vert^{-1} - P_{0}(A)\right)
\end{equation}
are well-defined and satisfy $P_{\pm}(A)\in S\ydo{0}{\Gamma}(M,E)$ with principal symbols
\begin{equation}\label{princsymprojgen}
{\sigma}_0(P_{\pm})(p,\xi)=\frac{1}{2}\left(1\pm{\sigma}_m(A)(p,\xi){\sigma}_{-m}\left(\absval{A}^{-1}\right)(p,\xi)\right)\\
\end{equation}
for all $p\in M$ and $\xi\in \dot{T}^\ast M$.
\end{cor}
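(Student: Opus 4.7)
The strategy is to verify the three ingredients of \clef{apsprojectorspos}--\clef{apsprojectorsneg} separately. The identity $\Iop{}$ is trivially in $\ydo{0}{\mathsf{prop},\Gamma}$, so by \Cref{remarkssmoothing} (1) it lies in $S\ydo{0}{\Gamma}(M,E)$ with principal symbol $1$. The kernel projector $P_0(A)$ equals $\Chi_{\SET{0}}(A)$, and since $A$ is elliptic, self-adjoint of positive order and $\SET{0}\subset\R$ is a bounded Borel set, \Cref{remarkssmoothing} (6) yields $P_0(A)\in S\ydo{-\infty}{\Gamma}(M,E)$; in particular it contributes nothing to the principal symbol. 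It therefore remains to make sense of the combination $A\circ\absval{A}^{-1}$ and to identify its principal symbol.

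For the absolute value, I would apply the $\Gamma$-Seeley result \Cref{kordyuresultsgamma} to $B := A^{\ast}\circ A = A^2 \in \ydo{2m}{\Gamma,\mathsf{prop,cl}}(M,E)$, which is self-adjoint with spectrum in $[0,\infty)$ and has principal symbol ${\sigma}_m(A)^2$. Under the standing assumption that this squared symbol is scalar positive definite (the setting of geometric Dirac-type operators), the hypotheses of the Seeley corollary are met except possibly at $\lambda=0$. To deal with the possible kernel I would replace $B$ by $\tilde B := B + P_0(A)$; by the previous paragraph this is an s-smoothing perturbation, so $\tilde B$ has the same principal symbol ${\sigma}_m(A)^2$ and still belongs to $\ydo{2m}{\Gamma,\mathsf{prop,cl}}$ modulo $S\ydo{-\infty}{\Gamma}$, while its spectrum is now bounded away from zero, placing a sector $\Lambda_\theta$ in its resolvent set. \Cref{kordyuresultsgamma} (3), applied with exponent $z=-1/2$, then produces $\tilde B^{-1/2}\in S\ydo{-m}{\Gamma}(M,E)$ with principal symbol $({\sigma}_m(A)^2)^{-1/2}$. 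Setting $\absval{A}^{-1} := \tilde B^{-1/2}\circ(\Iop{}-P_0(A))$ gives the sought s-regular $\Gamma$-pseudo-differential operator of order $-m$, and absorbing the $P_0(A)$-correction does not affect its principal symbol.

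Composing with $A \in S\ydo{m}{\Gamma}(M,E)$ and using multiplicativity of principal symbols yields $A\circ \absval{A}^{-1} \in S\ydo{0}{\Gamma}(M,E)$ with principal symbol ${\sigma}_m(A)\cdot{\sigma}_{-m}(\absval{A}^{-1})$. Summing the three contributions as prescribed by \clef{apsprojectorspos}--\clef{apsprojectorsneg} gives $P_{\pm}(A)\in S\ydo{0}{\Gamma}(M,E)$ with the asserted principal symbol \clef{princsymprojgen}. The main technical obstacle is precisely the handling of $\kernel{A}$: Seeley's construction of complex powers needs a sector avoiding a neighbourhood of the origin, whereas $0$ may genuinely lie in the spectrum of $A^2$. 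The s-smoothing character of $P_0(A)$ supplied by \Cref{remarkssmoothing} (6) is the pivotal input that makes the spectral shift $B \mapsto B + P_0(A)$ harmless both at the level of pseudo-differential orders and at the level of principal symbols, and thus cleanly reduces the claim to the $\Gamma$-Seeley theorem \Cref{kordyuresultsgamma}.
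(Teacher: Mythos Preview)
Your proposal is correct and follows essentially the same route as the paper, which does not carry a formal proof but rather derives the corollary from the paragraph immediately preceding it: set $B=A^2$, observe it is positive, elliptic, with positive definite principal symbol, invoke \Cref{kordyuresultsgamma} for $z=-1/2$ to obtain $\absval{A}^{-1}\in S\ydo{-m}{\Gamma}(M,E)$, and read off \clef{princsymprojgen} from multiplicativity of principal symbols together with the s-smoothing nature of $P_0(A)$. Your spectral shift $B\mapsto B+P_0(A)$ to push the spectrum away from $0$ is a refinement the paper glosses over (it simply asserts that $B$ ``becomes positive and altogether sectorial''); since $P_0(A)\in S\ydo{-\infty}{\Gamma}$ this modification is harmless at the level of s-regular classes and principal symbols, so your extra care is justified and does not depart from the paper's strategy.
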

\begin{rem}\label{remslabel1}
\begin{itemize}
\item[]
\item[(i)] One has $P_{\pm}=p_{\pm}+r_{\pm}$ where $p_{\pm} \in \ydo{0}{\Gamma,\mathsf{prop,cl}}(M,E)$ and $r_{\pm}$ is s-smoothing. As the principal symbol is defined modulo smoothing terms, one has ${\sigma}_m(P_{\pm})={\sigma}_m(p_{\pm})$.
\item[(ii)] $P_{\geq 0}(A):=\Chi_{[0,\infty)}(A)$ and $P_{\leq }(A):=\Chi_{(-\infty,0]}(A)$ differ from $P_{+}(A)$ respectively $P_{-}(A)$ in $P_{0}:=\Chi_{\SET{0}}$ which is s-smoothing. Hence, we also have $P_{\geq 0}(A),P_{\leq 0}(A)\in S\ydo{0}{\Gamma}(M,E)$ with principal symbols \clef{princsymprojgen}.
\item[(iii)] Since $A$ has been chosen to be self-adjoint, the complex power $z$ in the construction of $(A^\ast\cdot A)^z$ does not depend on the angle of the rays in the Hankel-like contour of \clef{cauchyoperator}. The same treatment could be applied to non-self-adjoint operators $A$ since 
$A^\ast A$ is self-adjoint, but now each complex power $z$ depends on the choice of the ray by fixing an angle via $\mathrm{arg}(z)$. This observation has been used in \cite{BaerStroh2} for the index theorem with compact Cauchy boundaries, but a-priori non-self-adjoint Dirac operators.
\end{itemize}
\end{rem}
If we cut the spectral range at another point $a\in \R$ than zero, we will also write 
\begin{eqnarray*}
P_{>a}(A)&:=&P_{(a,\infty)}(A)=\Chi_{(a,\infty)}(A)\quad\text{and}\quad P_{<a}(A):=P_{(-\infty,a)}(A)=\Chi_{(-\infty,a)}(A)\\ 
P_{\geq a}(A)&:=&P_{>a}(A)+P_{\SET{a}}(A)\quad\text{and}\quad P_{\leq a}(A):=P_{<a}(A)+P_{\SET{a}}(A)\,.
\end{eqnarray*}
These projectors are also s-regular $\Gamma$-pseudo-differential operators: depending on whether the spectral cutpoint $a \in \R$ is bigger or smaller then $0$, they can be related to $P_{\lesseqgtr 0}$ and differ in projectors with bounded intervals in the spectrum of $A$. Consequently,
\begin{equation}
P_{\gtrless b}(A), P_{\geq b}(A), P_{\leq b}(A), P_I(A) \in S\ydo{0}{\Gamma}(M,E)
\end{equation}
We also introduce the notation $P^\perp_I(A):=\Iop{E}-P_I(A)=P_{I^\complement}(A)$ which is an $s$-regular operator of order $0$ for all $I\subset \R$. 
\section{Setting for $L^2$-Gamma-Fredholmness}\label{chap:setting}

From now on, $M$ will always denote a globally hyperbolic spin manifold of even dimension $(n+1)$ ($n$ odd). The spacelike Cauchy hypersurface $\Sigma\subset M$ is assumed to be a Riemannian Galois covering with respect to a discrete, freely discontinuous, freely and cocompactly acting group $\Gamma$ with infinite conjugacy class. Thus, $\Sigma$ is from now on a $\Gamma$-manifold. $M$ becomes a spatial $\Gamma$-manifold with globally hyperbolic base such that it is isometric to $\timef(M)\times \Sigma/\Gamma$. We always assume that the group does not vary with the time parameter $t$. We will also specify $M$ to be temporal compact, i.e. there are times $t_1,t_2\in \R$ such that the time domain is $\timef(M)=[t_1,t_2]$. The base $M_\Gamma$ then becomes compact with boundary  $\Sigma_{1}/\Gamma\sqcup\Sigma_2/\Gamma$. This boundary lifts to two disjoint boundary hypersurfaces $\Sigma_{1}$ and $\Sigma_2$ on $M$, induced by the Cauchy hypersurface $\Sigma$ at times $t_1$, $t_2$: $\Sigma_{1}:=\Sigma_{t_1}=\SET{t_1}\times\Sigma$ and $\Sigma_2:=\SET{t_2}\times\Sigma$. These boundary hypersurfaces are $\Gamma$-manifolds with respect to the same group $\Gamma$. The metric $\met$ on $M$ is given by 
$$ \met=-N^2\differ^{\otimes 2}t+ \met_t$$
from \cite[Thm.2.1/Eq.2.8]{OD1}. The smooth one-parameter family of hypersurface metrics $\SET{\met_t}$ on the slices $\Sigma_t$ is assumed to be $\Gamma$-invariant which implies the spatial $\Gamma$-invariance of $\met$ and thus a spatial $\Gamma$-invariant volume form.\\
\\
The assumptions on $M$ to be globally hyperbolic and spin as well as on $\Sigma$ to be a spin Cauchy hypersurface is justified as long these to properties are true on the compact bases $M_\Gamma$ respectively $\Sigma_\Gamma$. The tangent bundles $T\Sigma$ and $TM$ are $\Gamma$-vector bundles with respect to the covering $\Sigma \rightarrow \Sigma_\Gamma$ respectively $M\rightarrow M_\Gamma$. As the spinor bundles over $\Sigma$ or $M$ are naturally related to the tangent bundles over $\Sigma$ respectively $M$, the bundles $\spinb^{\pm}(M)$, $\spinb(M)$, $\spinb^{\pm}(\Sigma)=\spinb(\Sigma)$ become $\Gamma$-vector bundles. The left action representation on $M$ acts as $L^{\spinb(\Sigma_t)}_\gamma$ in spatial direction and as identity in temporal direction, i.e. $(L^{\spinb(M)}_\gamma u)(t,p)=\pi^{\spinb(\Sigma_t)}_\Gamma u(t,\gamma^{-1}\cdot p)$ for $u\in C^\infty(\spinb(\Sigma_t))$.\\
\\
We will focus on $L^2$- and Sobolev-sections of the spinor bundles $\spinb^{\pm}_{}(\Sigma_t)=\spinb_{}(\Sigma_t)$ in the $\Gamma$-setting. Due to the diffeomorphy of $\Sigma$ to $\Gamma\times \mathcal{F}$ with $\mathcal{F}$ as fundamental domain of the $\Gamma$-action, the $\Gamma$-Sobolev spaces and $L^2$-spaces are free Hilbert $\Gamma$-modules:
\begin{equation*}
\begin{split}
L^2(\spinb^{\pm}_{}(\Sigma_t))&\,\cong\, \ell^2(\Gamma)\otimes L^2(\mathcal{F},\spinb^{\pm}_{}(\Sigma_t)\vert_{\mathcal{F}})\,\cong\, \ell^2(\Gamma)\otimes L^2(\spinb^{\pm}_{}(\Sigma_t/\Gamma))\\
H^s_\Gamma(\spinb^{\pm}_{}(\Sigma_t))&\,\cong\, \ell^2(\Gamma)\otimes H^s(\mathcal{F},\spinb^{\pm}_{}(\Sigma_t)\vert_{\mathcal{F}})\,\cong\, \ell^2(\Gamma)\otimes H^s(\spinb^{\pm}_{}(\Sigma_t/\Gamma))
\end{split}
\end{equation*}
for all $s\in \R_{>0}$ and $t\in \timef(M)$; the right isomorphisms in each line come from the density of $\mathcal{F}$ in the compact base. As $\Gamma$ is assumed to have infinite conjugacy class, these Hilbert $\Gamma$-modules imply that
$$\mathscr{B}_\Gamma(H^s_\Gamma(\spinb^{\pm}_{}(\Sigma_t)),H^{s-m}_\Gamma(\spinb^{\pm}_{}(\Sigma_t)))\,\cong\,\mathscr{N}_r(\Gamma)\otimes \mathscr{B}(H^s(\spinb^{\pm}_{}(\Sigma_t/\Gamma)),H^{s-m}(\spinb^{\pm}_{}(\Sigma_t/\Gamma))) $$
are von Neumann algebra factors for any $s\geq m$.\\
\\
Assuming that the hypersurface Dirac operator $A_t$ is a $\Gamma$-invariant Riemannian Dirac operator, it becomes an elliptic, essentially self-adjoint and s-regular $\Gamma$-operator. It follows from \Cref{propgammapseudoprop} (1) that it is a $\Gamma$-morphism from $H^s_\Gamma(\spinb^{\pm}_{}(\Sigma_t))$ to $H^{s-1}_\Gamma(\spinb^{\pm}_{}(\Sigma_t))$. Properties (7) and (8) of \Cref{remarkssmoothing} furthermore imply that the spectral projections onto $L^2$-subsets with respect eigenvalues in a bounded Borel set as well as the projection onto the kernel of each $A_t$ in $L^2(\spinb^{\pm}_{}(\Sigma_t))$ are $\Gamma$-trace class operators. The most relevant conclusion is that each $\Gamma$-invariant hypersurface Dirac operator is $\Gamma$-Fredholm between $\Gamma$-Sobolev spaces and the $\Gamma$-index vanishes due to self-adjointness (\Cref{remarkssmoothing} (8)).\\   
\\
We consider the Lorentzian (Atiyah-Singer) Dirac operators $D_{\pm}$ and their decompositions into tangential and normal operators along a slice $\Sigma_t$, introduced in \cite[Sec.3.2]{OD1}. The $\Gamma$-invariance of the hypersurface metric implies that the tangential part, i.e. $A_t$, commutes with the left action representation $L^{\spinb(\Sigma_t)}_\gamma$ on $\Sigma_t$ at each time $t$ and $\gamma\in \Gamma$. Hence the Dirac operators become $\Gamma$-invariant with respect to this induced $\Gamma$-action and can be viewed as lift of Dirac operators $\underline{D}_{\pm}$ on the compact base.

\section{Well-posedness in $\Gamma$-setting}\label{chap:wellposgamma}

As a starting point in showing $\Gamma$-Fredholmness of $D^{}_{\pm}$, we provide a $\Gamma$-version of the well-posedness results \Cref{inivpwell} and \Cref{homivpwell}.

\subsection{Well-posedness}\label{chap:wellposgamma-sec:wellposgamma}

If $M$ is temporal compact, any spatially compact subset of $M$ is itself compact since it is a closed subset in $M$ and $\Jlight{}(K)\subset M$ for any $K\Subset M$ becomes compact. Thus, 
\begin{eqnarray*}
C^l_\scomp([t_1,t_2],H^s_\loc(\spinb^{\pm}_{}(\Sigma_\bullet))) &\rightarrow& C^l([t_1,t_2],H^s_\loc(\spinb^{\pm}_{}(\Sigma_\bullet))) \\
L^2_{\loc,\scomp}([t_1,t_2],H^s_\loc(\spinb^{\pm}_{}(\Sigma_\bullet))) &\rightarrow& L^2([t_1,t_2],H^s_\loc(\spinb^{\pm}_{}(\Sigma_\bullet)))
\end{eqnarray*} 
for $l \in \N_0$ and $s\in \R$. The spaces $C^0_\scomp$ and $L^2_{\loc,\scomp}$ have been introduced in \cite[Sec.2.3]{OD1}. We modify these spaces with $H^s_\Gamma \subset H^s_\loc$ for any $s\in \R$:
\begin{equation}\label{upgammafesections}
\begin{split}
\text{(a)}\quad & FE^s_\Gamma(M,[t_1,t_2],\spinb^{\pm}_{}(M)):= C^0([t_1,t_2],H^s_\Gamma(\spinb^{\pm}_{}(\Sigma_\bullet))) \quad; \\
\text{(b)}\quad & FE^s_\Gamma(M,[t_1,t_2],D^{}_{\pm}):=\Big\lbrace u \in FE^s_\Gamma(M,[t_1,t_2],\spinb^{\pm}_{}(M))\,\Big\vert \\ 
&\mathrel{\phantom{XXXXXXXXXXXXXXXXXXXXX}} D^{}_{\pm}u\in L^2([t_1,t_2],H^s_\Gamma(\spinb^{\mp}_{}(\Sigma_\bullet)))\Big\rbrace \\
\text{(c)}\quad & FE^s_\Gamma\left(M,\kernel{D^{}_{\pm}}\right):=\SET{u \in FE^s_\Gamma(M,[t_1,t_2],\spinb^{\pm}_{}(M))\,\Big\vert\,D^{}_{\pm}u=0} \quad .
\end{split}
\end{equation}
A seminorm on $FE^s_{\Gamma}(M,[t_1,t_2],\spinb^{\pm}_{}(M))$ is defined with the norm of $\Gamma$-Sobolev spaces:
\begin{equation}\label{snclkgamma}
\Vert u \Vert_{I,K,l,s,\Gamma}:= \max_{k \in [0,l]\cap \N_0} \max_{t \in I} \big\Vert (\nabla_t)^{l} u\big\Vert_{H^s_\Gamma(\spinb^{\pm}_{}(\Sigma_\bullet))} \quad.
\end{equation}
$L^2([t_1,t_2],H^s_\Gamma(\spinb^{\pm}_{}(\Sigma_\bullet)))$ is a bundle of free Hilbert $\Gamma$-modules; the left action representation on $H^s_\Gamma(\spinb^{\pm}_{}(\Sigma_t))$ for each time induces a left action on $L^2([t_1,t_2],H^s_\Gamma(\spinb^{\pm}_{}(\Sigma_\bullet)))$ at each time $t\in [t_1,t_2]$. Thus, $L^2([t_1,t_2],H^s_\Gamma(\spinb^{\pm}_{}(\Sigma_\bullet)))$ become free Hilbert $\Gamma$-modules for any $s$ on their own right. We point out that
\begin{equation*}
L^2([t_1,t_2],H^0_\Gamma(\spinb^{\pm}_{}(\Sigma_\bullet)))=L^2([t_1,t_2],L^2(\spinb^{\pm}_{}(\Sigma_\bullet)))=L^2(\spinb^{\pm}_{}(M)) \quad.
\end{equation*}
The $\Gamma$-versions of \Cref{inivpwell} and \Cref{homivpwell} are then reformulated as follows.
\begin{theo}\label{inivpwellgammatempcomp}
For a fixed $t \in [t_1,t_2]$ with $M$ a temporal compact globally hyperbolic spatial $\Gamma$-manifold and any $s\in \R$ the maps
\begin{equation}\label{inivpmapqtempcomp}
\rest{t} \oplus D^{}_{\pm} \,\,:\,\, FE^s_{\Gamma}(M,[t_1,t_2],D^{}_{\pm}) \,\,\rightarrow\,\, H^s_\Gamma(\spinb^{\pm}_{}(\Sigma_t))\oplus L^2([t_1,t_2],H^s_\Gamma(\spinb^{\mp}_{}(\Sigma_\bullet)))
\end{equation}
are $\Gamma$-isomorphisms of Hilbert $\Gamma$-modules.
\end{theo}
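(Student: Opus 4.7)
The plan is to leverage the classical well-posedness result \Cref{inivpwell} and promote it from the $H^s_\comp/H^s_\loc$ setting to the $H^s_\Gamma$ setting by exploiting the $\Gamma$-equivariance of the Dirac wave evolution operators. Since $M$ is temporal compact with time domain $[t_1,t_2]$, one has $L^2([t_1,t_2], H^s_\Gamma(\spinb^{\mp}(\Sigma_\bullet))) \subset L^2_{\loc,\scomp}([t_1,t_2], H^s_\loc(\spinb^{\mp}(\Sigma_\bullet)))$ and $H^s_\Gamma(\spinb^{\pm}(\Sigma_t))\subset H^s_\loc(\spinb^{\pm}(\Sigma_t))$, so given $\Gamma$-regular data $g \in H^s_\Gamma(\spinb^{\pm}(\Sigma_t))$ and $f \in L^2([t_1,t_2], H^s_\Gamma(\spinb^{\mp}(\Sigma_\bullet)))$ a unique solution $u \in FE^s_\scomp(M,[t_1,t_2],D_{\pm})$ exists by \Cref{inivpwell}. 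The task reduces to upgrading the slicewise regularity of $u$ to $H^s_\Gamma$ with continuous dependence, and to confirming $\Gamma$-equivariance of both the forward map and its inverse.

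The key tool is that the Dirac wave evolution operator $Q(\tau,t)$ (respectively $\tilde Q(\tau,t)$) of \Cref{Qfourier}, built from the Cauchy problem for the $\Gamma$-invariant Dirac operator $D_{\pm}$, intertwines the spatial left action representation: $Q(\tau,t)\, L^{\spinb(\Sigma_t)}_\gamma = L^{\spinb(\Sigma_\tau)}_\gamma\, Q(\tau,t)$ for all $\gamma \in \Gamma$. As a properly supported $\Gamma$-equivariant Fourier integral operator of order zero, it belongs to $S\FIO{0}_\Gamma$ on each compact time slab, and the $\Gamma$-Sobolev mapping property analogous to \Cref{remarkssmoothing}(4) yields
\begin{equation*}
\|Q(\tau,t)g\|_{H^s_\Gamma(\spinb^{\pm}(\Sigma_\tau))} \le C(\tau,t)\,\|g\|_{H^s_\Gamma(\spinb^{\pm}(\Sigma_t))}
\end{equation*}
with $C(\tau,t)$ continuous and hence uniformly bounded on $[t_1,t_2]\times[t_1,t_2]$. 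For the inhomogeneous problem I use Duhamel's principle: in the $3+1$ decomposition $D_{\pm} = \upbeta(\nabla_t + A_t + R_t)$ of \cite[Sec.3.2]{OD1}, where $\upbeta$ is the pointwise $\Gamma$-equivariant Clifford action of the unit past-directed normal (and thus fibrewise invertible), the solution reads
\begin{equation*}
u(\tau) = Q(\tau,t)\,g + \int_t^{\tau} Q(\tau,s)\,\upbeta^{-1} f(s)\, ds.
\end{equation*}
Combining $\Gamma$-Sobolev boundedness of $Q$ with the Cauchy--Schwarz inequality in time places $u$ in $FE^s_\Gamma(M,[t_1,t_2],D_{\pm})$ together with a continuous estimate against $\|g\|_{H^s_\Gamma} + \|f\|_{L^2 H^s_\Gamma}$.

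Consequently $\rest{t}\oplus D^{}_{\pm}$ admits an explicit Duhamel-type inverse. Both the map and its inverse are continuous and $\Gamma$-equivariant because every building block---$Q$, $\rest{t}$, $D_{\pm}$, $\upbeta^{-1}$---commutes with the spatial $\Gamma$-action. This establishes a topological $\Gamma$-isomorphism, which by \Cref{prophilbertgammamodules}(1) promotes to a unitary $\Gamma$-isomorphism of Hilbert $\Gamma$-modules, as required. The main obstacle is justifying the $\Gamma$-Sobolev boundedness of $Q(\tau,t)$ uniformly in $\tau$: one has to verify that the lightlike canonical relation \clef{Qcanrel} and the principal symbol \clef{Qprincsymbol} are genuinely $\Gamma$-equivariant and that $Q$ is properly supported in the $\Gamma$-sense, so that the FIO construction from \cite{OD1} lifts to the $S\FIO{0}_\Gamma$ framework; the alternative would be a direct energy estimate based on a $\Gamma$-invariant partition of unity combined with the finite propagation speed of the Dirac evolution on each slab.
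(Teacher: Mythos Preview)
Your route is genuinely different from the paper's. The paper establishes \Cref{inivpwellgammatempcomp} by a direct $\Gamma$-energy estimate: it sets $\mathcal{E}_{s,\Gamma}(u,\Sigma_t):=\sum_{i,\gamma}\mathcal{E}_s(\phi_{i,\gamma}u,\Sigma_t)$ using a $\Gamma$-invariant partition of unity, differentiates in $t$, and reproduces the estimate of \cite[Prop.~4.6/Cor.~4.7]{OD1} slice by slice to obtain a Gr\"onwall-type bound $\mathcal{E}_{s,\Gamma}(u,\Sigma_t)\le C\bigl(\mathcal{E}_{s,\Gamma}(u,\Sigma_\tau)+\|D_\pm u\|^2_{L^2H^s_\Gamma}\bigr)$ with $C$ independent of $\supp{u}$ (cocompactness of $\Sigma_\Gamma$ absorbs the support dependence). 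From this the bijectivity and continuity of the inverse follow, and \Cref{prophilbertgammamodules} then supplies the Hilbert $\Gamma$-module structure. Your Duhamel/FIO strategy is more structural and, if the FIO mapping property is available, shorter; the paper's energy method is self-contained and does not presuppose any $\Gamma$-FIO calculus.

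There are two concrete problems with the argument as written. First, the opening step is wrong: the inclusion $L^2([t_1,t_2],H^s_\Gamma)\subset L^2_{\loc,\scomp}([t_1,t_2],H^s_\loc)$ fails because elements of $H^s_\Gamma(\Sigma_t)$ are in general \emph{not} compactly supported ($\Sigma$ is noncompact), so \Cref{inivpwell} cannot be invoked to produce a solution in $FE^s_\scomp$ for $\Gamma$-data. You must either approximate by $C^\infty_\comp$ data and pass to the limit via an a~priori $H^s_\Gamma$-estimate, or use the Duhamel formula itself as the \emph{definition} of the solution once $Q$ has been extended. Second, and this is the gap you flag yourself, the $H^s_\Gamma$-boundedness of $Q(\tau,t)$ is never supplied: your appeal to \Cref{remarkssmoothing}(4) is for s-regular $\Gamma$-\emph{pseudodifferential} operators, and the paper contains no analogue for $S\FIO{0}_\Gamma$. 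Proving that a properly supported $\Gamma$-invariant FIO of order zero with graph-type canonical relation is bounded $H^s_\Gamma\to H^s_\Gamma$ requires its own argument (localisation via the $\Gamma$-partition of unity plus the local $H^s$-boundedness of order-zero FIOs, or descent to the compact quotient). Without it the Duhamel estimate does not close, and you are left with exactly the ``alternative'' you mention in your last sentence---which is precisely the paper's proof.
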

\begin{cor}\label{homivpwellgammatempcomp}
For a fixed $t \in [t_1,t_2]$ with $M$ a temporal compact globally hyperbolic spatial $\Gamma$-manifold and any $s\in \R$ the maps
\begin{equation}\label{homivpmapqtempcomp}
\rest{t}  \,\,:\,\, FE^s_{\Gamma}\left(M,\kernel{D^{}_{\pm}}\right) \,\,\rightarrow\,\, H^s_\Gamma(\spinb^{\pm}_{}(\Sigma_t))
\end{equation}
are $\Gamma$-isomorphisms of Hilbert $\Gamma$-modules.
\end{cor}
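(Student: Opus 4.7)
The plan is to deduce \Cref{homivpwellgammatempcomp} as a direct corollary of the inhomogeneous well-posedness \Cref{inivpwellgammatempcomp}, by restricting to the homogeneous subspace $\SET{u\,:\,D^{}_{\pm}u=0}$. First I would note that since $D^{}_{\pm}$ is a $\Gamma$-invariant differential operator, the map
\begin{equation*}
D^{}_{\pm}\,:\,FE^s_\Gamma(M,[t_1,t_2],D^{}_{\pm})\,\rightarrow\,L^2([t_1,t_2],H^s_\Gamma(\spinb^{\mp}_{}(\Sigma_\bullet)))
\end{equation*}
is a bounded $\Gamma$-morphism of Hilbert $\Gamma$-modules, where the domain inherits its Hilbert $\Gamma$-module structure from \Cref{inivpwellgammatempcomp} via the $\Gamma$-isomorphism with the free Hilbert $\Gamma$-module $H^s_\Gamma(\spinb^{\pm}_{}(\Sigma_t))\oplus L^2([t_1,t_2],H^s_\Gamma(\spinb^{\mp}_{}(\Sigma_\bullet)))$. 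The space $FE^s_\Gamma(M,\kernel{D^{}_{\pm}})$ is by definition the kernel of this $\Gamma$-morphism, hence a closed $\Gamma$-invariant subspace; by \Cref{helpinglemma2} (1) it is automatically a projective Hilbert $\Gamma$-submodule.

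Next I would restrict the $\Gamma$-isomorphism $\mathsf{res}_t\oplus D^{}_{\pm}$ from \Cref{inivpwellgammatempcomp} to this kernel. For $u\in FE^s_\Gamma(M,\kernel{D^{}_{\pm}})$ one has $(\mathsf{res}_t\oplus D^{}_{\pm})(u)=(\mathsf{res}_t(u),0)\in H^s_\Gamma(\spinb^{\pm}_{}(\Sigma_t))\oplus\SET{0}$, so the restriction factors through the first summand. Injectivity of $\mathsf{res}_t$ on the kernel is immediate: if $u\in\kernel{D^{}_{\pm}}$ and $\mathsf{res}_t(u)=0$, then $(\mathsf{res}_t\oplus D^{}_{\pm})(u)=0$, whence $u=0$ by injectivity of the full map. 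Surjectivity follows by applying the inverse from \Cref{inivpwellgammatempcomp} to the pair $(g,0)$: the element $u:=(\mathsf{res}_t\oplus D^{}_{\pm})^{-1}(g,0)$ automatically satisfies $D^{}_{\pm}u=0$ and $\mathsf{res}_t(u)=g$.

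Finally, boundedness and $\Gamma$-equivariance of both $\mathsf{res}_t\,:\,FE^s_\Gamma(M,\kernel{D^{}_{\pm}})\rightarrow H^s_\Gamma(\spinb^{\pm}_{}(\Sigma_t))$ and its inverse $g\mapsto(\mathsf{res}_t\oplus D^{}_{\pm})^{-1}(g,0)$ are inherited from the corresponding properties of the isomorphism of \Cref{inivpwellgammatempcomp}: restriction preserves continuity and commutation with the left action representation. There is no real obstacle here, as all the analytic work has been absorbed into the inhomogeneous statement; the only bookkeeping concerns identifying $FE^s_\Gamma(M,\kernel{D^{}_{\pm}})$ as a Hilbert $\Gamma$-submodule, which is precisely the content of \Cref{helpinglemma2}. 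This yields the desired $\Gamma$-isomorphism of Hilbert $\Gamma$-modules and completes the proof of the corollary.
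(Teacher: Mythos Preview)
Your proposal is correct and follows essentially the same route as the paper: deduce the homogeneous statement by restricting the $\Gamma$-isomorphism of \Cref{inivpwellgammatempcomp} to the kernel of $D^{}_{\pm}$, identifying the image with the first summand $H^s_\Gamma(\spinb^{\pm}_{}(\Sigma_t))\oplus\SET{0}$. The only cosmetic difference is that you invoke \Cref{helpinglemma2} (1) to certify that $FE^s_\Gamma(M,\kernel{D^{}_{\pm}})$ is a projective Hilbert $\Gamma$-submodule, whereas the paper appeals directly to \Cref{prophilbertgammamodules} (2) using the already-established topological $\Gamma$-isomorphism with the free module $H^s_\Gamma(\spinb^{\pm}_{}(\Sigma_t))$; both are valid, and your version has the slight advantage of not being circular in the order of verification.
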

\begin{proof}
The continuity of $D_{\pm}$ as map from $FE^s_{\Gamma}(M,[t_1,t_2],D_{\pm})$ to $L^2([t_1,t_2],H^s_\Gamma(\spinb^{\mp}(\Sigma_\bullet)))$ follows by construction of the domain. The continuity of the restriction is given by the following modified argument: for all $t\in [t_1,t_2]$ the diffeomorphism between $M$ and the product manifold $[t_1,t_2] \times \Sigma$ implies that for each $\Gamma$-hypersurface one and the same time independent $\Gamma$-invariant partition of unity $\SET{\phi_{i,\gamma}}_{\substack{i \in I \\ \gamma \in \Gamma}}$, subordinated to a covering of $\Sigma$, can be chosen. Thus, every slice in $\SET{\Sigma_t}_{t \in [t_1,t_2]}$ has the same partition of unity. As in the the proof of \Cref{inivpwell} in \cite[Sec.4.3]{OD1} with $\mathcal{K} \cap \Sigma_t$ replaced by $K(t,i,\gamma):=\mathcal{K} \cap \Sigma_t \cap \supp{\phi_{i,\gamma}}$ for each $i \in I$, $\gamma \in \Gamma$ and $\mathcal{K}$ spatially compact, we gain 
\begin{eqnarray*}
\norm{\rest{t}u}{H^s_\Gamma(\spinb^{\pm}(\Sigma_t))} &\leq& \Vert u \Vert_{[t_1,t_2],\mathcal{K},0,s,\Gamma} \quad.
\end{eqnarray*}
The second estimate in the proof of \Cref{inivpwell} can be modified after introducing the $\Gamma$\textit{-s-energy} as square of the $\Gamma$-Sobolev norm:
\begin{equation*}
\mathcal{E}_{s,\Gamma}(u,\Sigma_t) = \norm{u}{H^s_\Gamma(\spinb(\Sigma_t))}^2=\sum_{\substack{i \in I\\\gamma \in \Gamma}}\norm{\phi_{i,\gamma} u}{H^s(K(t,i,\gamma),\spinb(\Sigma_t))}^2= \sum_{\substack{i \in I\\\gamma \in \Gamma}} \mathcal{E}_{s}(\phi_{i,\gamma}u,\Sigma_t)\quad.
\end{equation*} 
One observes that this energy can be rewritten as the usual $s$-energy from \cite[Sec.4.2]{OD1}. We abbreviate $\phi:=\phi_{i,\gamma}$ and $\tilde{u}=\phi u$ and perform the estimate of the energy $\mathcal{E}_{s}(\phi_{i,\gamma}u,\Sigma_t)$ like in the proof of \cite[Prop.4.6]{OD1} where one makes use of the time independence of the partition of unity: we abbreviate $B_{t,\pm}=\pm i A_t-n/2 H_t$ and observe
\begin{align*}
\frac{\differ}{\differ t}\mathcal{E}_s(\tilde{u},\Sigma_{t}) 
&\leq n\dscal{1}{L^2(\spinb(\Sigma_t))}{H_t \Lambda^s_t \tilde{u}}{\Lambda^s_t \tilde{u}}+c_2\norm{\tilde{u}}{H^s(\spinb(\Sigma_t))}^2-2 \Re\mathfrak{e}\left\lbrace\dscal{1}{H^s(\spinb(\Sigma_t))}{\tilde{u}}{\phi \nabla_{\mathsf{v}} u} \right\rbrace \\
&= n\dscal{1}{L^2(\spinb(\Sigma_t))}{H_t \Lambda^s_t \tilde{u}}{\Lambda^s_t \tilde{u}}+c_2\norm{\tilde{u}}{H^s(\spinb(\Sigma_t))}^2\\
&\quad\quad+2 \Re\mathfrak{e}\left\lbrace\dscal{1}{H^s(\spinb(\Sigma_t))}{\tilde{u}}{\upbeta \phi D_{\pm} u} \right\rbrace + 2 \Re\mathfrak{e}\left\lbrace\dscal{1}{H^s(\spinb(\Sigma_t))}{\tilde{u}}{\phi B_{t,\pm} u} \right\rbrace \\
&\stackrel{(\ast)}{\leq} n\absval{\dscal{1}{L^2(\spinb(\Sigma_t))}{H_t \Lambda^s_t \tilde{u}}{\Lambda^s_t \tilde{u}}}{}+c_2\norm{\tilde{u}}{H^s(\spinb(\Sigma_t))}^2\\
&\quad\quad+2 \Re\mathfrak{e}\left\lbrace\dscal{1}{L^2(\spinb(\Sigma_t))}{\Lambda^s_t\tilde{u}}{\upbeta \Lambda^s_t \phi D_{\pm} u} \right\rbrace + 2 \Re\mathfrak{e}\left\lbrace\dscal{1}{H^s(\spinb(\Sigma_t))}{\tilde{u}}{\phi B_{t,\pm} u} \right\rbrace \\
&\leq  n\norm{H_t \Lambda^s_t \tilde{u}}{L^2(\spinb(\Sigma_t))}^2+c_3\norm{\tilde{u}}{H^s(\spinb(\Sigma_t))}^2+ \norm{\upbeta \Lambda^s_t \phi D_{\pm} u}{L^2(\spinb(\Sigma_t))}^2 \\
&\quad\quad+ \norm{\Lambda^s_t \phi B_{t,\pm} u}{L^2(\spinb(\Sigma_t))}^2 \\
&\leq  c_4\norm{\tilde{u}}{H^s(\spinb(\Sigma_t))}^2+\norm{\phi D_{\pm} u}{H^s(\spinb(\Sigma_t))}^2 + \norm{\phi B_{t,\pm} u}{H^s(\spinb(\Sigma_t))}^2\quad;
\end{align*}
We used in $(\ast)$ the same isometry property from \cite[Eq.3.23/3.24]{OD1}. Since $H_t \Lambda^s_t$ is a $\Gamma$-pseudo-differential operator of order s, we have $H_t \Lambda^s_t \tilde{u} \in L^2(\spinb^{\pm}(\Sigma_t))$ and thus the estimate for Sobolev norms; the isometry property of $\upbeta$ has been used in the last step. Since $B_{t,\pm}$ are properly supported, we have $B_{t,\pm} u\vert_{\Sigma_t} \in H^s_\loc(\spinb^{+}(\Sigma_t))$ for $u\vert_{\Sigma_t} \in H^{s+1}_\loc(\spinb^{+}(\Sigma_t))$. Summing over all covering balls and $\Gamma$-actions leads to
\begin{equation*} 
\sum_{\substack{i \in I\\\gamma \in \Gamma}} \norm{\phi_{i,\gamma} B_{t,\pm} u}{H^s(\spinb(\Sigma_t))}^2 \leq c \sum_{\substack{i \in I\\\gamma \in \Gamma}} \norm{\phi_{i,\gamma} u}{H^{s+1}(\spinb(\Sigma_t))}^2 \leq \tilde{c} \mathcal{E}_{s,\Gamma}(u,\Sigma_{t}) \quad.
\end{equation*}
The second inequality is a result of the continuous inclusion of Sobolev spaces. One finally yields
\begin{equation*}
\frac{\differ}{\differ t}\mathcal{E}_s(u,\Sigma_{t}) \leq c_5\mathcal{E}_{s,\Gamma}(u,\Sigma_t)+\norm{D_{\pm} u}{H^s_\Gamma(\spinb(\Sigma_t))}^2
\end{equation*}
and repeating all further steps from the proof in the general case as well as from \cite[Cor.4.7]{OD1} shows that for $\tau \in [t_1,t_2]$, $K \subset M$ compact, and $s \in \R$ there exists a $C>0$ such that
\begin{equation}\label{enestgamma}
\mathcal{E}_{s,\Gamma}(u,\Sigma_{t})\leq C \left(\mathcal{E}_{s,\Gamma}(u,\Sigma_{\tau})+ \norm{D_{\pm} u}{[t_1,t_2],\Jlight{}(K),s,\Gamma}^2\right) 
\end{equation}
is valid for all $t$ in any $[t_1,t_2]$ and for all $u \in FE^{s+1}_{\Gamma}(M,[t_1,t_2],D_{\pm})$ with $D_{\pm}u \in FE^s_{\Gamma}(M,[t_1,t_2],\spinb^{\mp}(M))$ and $\supp{u}\subset \Jlight{}(K)$. In comparison to \cite[Prop.4.6]{OD1} and \cite[Cor.4.7]{OD1}, the spatial compact support has been ensured by the $\Gamma$-invariant partition of unity which has been used in order to carry over the proof up to these modifications. Hence, the constant $C$ depends on the projection of the support onto $\Sigma\mirror{\setminus}\Gamma$, but since this base is compact by our general preassumption, the constant $C$ is now independent of the support of $u$. The $\Gamma$-version of \cite[Cor.4.8]{OD1} then follows with identical arguments and can be used for the well-posedness of the Cauchy problem on $\Gamma$-manifolds: any finite energy section in $FE^s_{\Gamma}(M,[t_1,t_2],D_{\pm})$ can be estimated with an initial value $u_0 \in H^s_\Gamma(\Sigma_1,\spinb^{\pm}(\Sigma_1))$ and inhomogeneity $f=D_{\pm}u \in L^2([t_1,t_2],H^s_\Gamma(\spinb^{\mp}(\Sigma_\bullet)))$ with \clef{enestgamma} at initial time $t=t_1$:
\begin{equation*}
\begin{split}
\Vert u \Vert_{[t_1,t_2],\Jlight{}(K),0,s,\Gamma}^2 &\leq \max_{\tau \in [t_1,t_2]}\SET{\norm{u}{H^s_\Gamma(\spinb^{\pm}(\Sigma_\tau))}^2}=\max_{\tau \in [t_1,t_2]}\big\{\mathcal{E}_{s,\Gamma}(u,\Sigma_{\tau})\big\}\\
&\leq C \left(\mathcal{E}_{s,\Gamma}(u_0,\Sigma_{1})+ \norm{f}{[t_1,t_2],\Jlight{}(K),s,\Gamma}^2\right) \quad .
\end{split}
\end{equation*}
The rest of the proof works analogously such that one yields an isomorphism between the topological vector spaces $FE^s_{\Gamma}(M,[t_1,t_2],D_{\pm})$ and $H^s_\Gamma(\spinb^{\pm}(\Sigma_t))\oplus L^2([t_1,t_2],H^s_\Gamma(\spinb^{\pm}(\Sigma_\bullet)))$. $D_{\pm}$ are $\Gamma$-invariant by assumption and the restriction operator $\rest{t}$ just fixes a slice at time $t$, wherefore it intertwines the $\Gamma$-action. Hence, the isomorphisms in the claims are $\Gamma$-invariant. It is left to show, that $FE^s_\Gamma(M,[t_1,t_2],D^{}_{\pm})$ are Hilbert $\Gamma$-modules. We clarified beforehand that $L^2([t_1,t_2],H^s_\Gamma(\spinb^{\pm}_{}(\Sigma_\bullet)))$ are free Hilbert $\Gamma$-modules for all $s\in \R$. Since $H^s_\Gamma(\spinb^{\pm}_{}(\Sigma_t))$ is also a free Hilbert $\Gamma$-module, its direct sum in \clef{inivpmapqtempcomp} becomes a free Hilbert $\Gamma$-module due to \Cref{helpinglemma1}. $FE^s_\Gamma(M,[t_1,t_2],D^{}_{\pm})$ is a general Hilbert $\Gamma$-module because the $\Gamma$-isomorphism implies a Hilbert space structure and the space admits a left action representation from $H^s_\Gamma(\spinb^{\pm}_{}(\Sigma_t))$ for each time $t$. \Cref{prophilbertgammamodules} (2) finally ensures that $FE^s_\Gamma(M,[t_1,t_2],D^{}_{\pm})$ becomes a (free) Hilbert $\Gamma$-module. A similar reasoning can be applied to $FE^s_{\Gamma}\left(M,\kernel{D^{}_{\pm}}\right)$. 
\end{proof}

\subsection{Wave-evolution operator in $\Gamma$-setting}\label{chap:wellposgamma-sec:waveevolgamma}

As in the general setting we can extract several Dirac-wave evolution operators from \Cref{homivpwellgammatempcomp}.
\begin{defi}\label{defevopgamma}
For a globally hyperbolic manifold $M$ with time domain $[t_1,t_2]$ the (Dirac-)wave evolution operators for positive and negative chirality are the following isomorphisms between free Hilbert $\Gamma$-modules
\begin{eqnarray*}
Q(t_2,t_1):=\rest{t_2}\circ(\rest{t_1})^{-1} &:& H^s_\Gamma(\spinb^{+}(\Sigma_{1}))\,\,\rightarrow\,\,H^s_\Gamma(\spinb^{+}(\Sigma_{2}))\,\, , \\
\tilde{Q}(t_2,t_1):=\rest{t_2}\circ(\rest{t_1})^{-1}&:&H^s_\Gamma(\spinb^{-}(\Sigma_{1}))\,\,\rightarrow\,\,H^s_\Gamma(\spinb^{-}(\Sigma_{2}))\,\, . 
\end{eqnarray*}
\end{defi}

These operators share the same properties as the wave evolution operators, presented in \cite[Lem.5.2]{OD1}. We want to point out the $\Gamma$-version of \Cref{Qfourier}. 

\begin{lem}\label{propsofpropgamma}
For any $s \in \R$ and $t_1,t_2 \in [t_1,t_2]$ the following holds:
\begin{equation*}
\begin{split}
Q^{} &\in \FIO{0}_\Gamma(\Sigma_{1},\Sigma_{2};\mathsf{C}'_{1\rightarrow 2};\Hom(\spinb^{+}_{}(\Sigma_1),\spinb^{+}_{}(\Sigma_2)))\quad, \\
\tilde{Q}^{} &\in \FIO{0}_\Gamma(\Sigma_{1},\Sigma_{2};\mathsf{C}'_{1\rightarrow 2};\Hom(\spinb^{+}_{}(\Sigma_1),\spinb^{+}_{}(\Sigma_2)))
\end{split}
\end{equation*}
as in \Cref{Qfourier}.
\end{lem}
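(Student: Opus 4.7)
The plan is to invoke \Cref{Qfourier} for the Fourier integral structure, canonical relation, order, and principal symbols, and then upgrade to the $\Gamma$-equivariant class $\FIO{0}_\Gamma$ by exploiting uniqueness in the $\Gamma$-version of well-posedness (\Cref{homivpwellgammatempcomp}) together with the spatial $\Gamma$-invariance of the setup fixed in \Cref{chap:setting}. In short, all FIO data have already been computed in paper I; the only new content is the intertwining with the left action representations.

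First I would prove $\Gamma$-equivariance. Fix $\gamma \in \Gamma$ and $g \in H^s_\Gamma(\spinb^{+}(\Sigma_1))$, and let $u \in FE^s_\Gamma(M,[t_1,t_2],\kernel{D^{}_{+}})$ be the unique homogeneous solution with $\rest{t_1} u = g$ produced by \Cref{homivpwellgammatempcomp}. Setting $v := L^{\spinb^{+}(M)}_\gamma u$, the spatial $\Gamma$-invariance of $D^{}_{+}$ recorded at the end of \Cref{chap:setting} gives
\begin{equation*}
D^{}_{+} v = L^{\spinb^{-}(M)}_\gamma D^{}_{+} u = 0,
\end{equation*}
while the fact that the $\Gamma$-action on $M$ is purely spatial yields $\rest{t_1} v = L^{\spinb^{+}(\Sigma_1)}_\gamma g$. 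Uniqueness in \Cref{homivpwellgammatempcomp} then forces
\begin{equation*}
Q(t_2,t_1)\, L^{\spinb^{+}(\Sigma_1)}_\gamma g = \rest{t_2} v = L^{\spinb^{+}(\Sigma_2)}_\gamma \rest{t_2} u = L^{\spinb^{+}(\Sigma_2)}_\gamma Q(t_2,t_1) g.
\end{equation*}
Hence $Q$ intertwines the left action representations on $H^s_\Gamma(\spinb^{+}(\Sigma_1))$ and $H^s_\Gamma(\spinb^{+}(\Sigma_2))$, i.e.\ is a $\Gamma$-operator in the sense of \Cref{defigammaopmorph}. The same argument applied to $D^{}_{-}$ handles $\tilde{Q}$.

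Second I would assemble the FIO claim. \Cref{Qfourier} already provides all Lagrangian data for $Q$ and $\tilde{Q}$: order $0$, the properly supported property, the canonical relation $\mathsf{C}'_{1\to 2}$ coming from the lightlike cogeodesic flow, and the principal symbols \clef{Qprincsymbol}, \clef{Qnegprincsymbol}. Every geometric ingredient entering these data---the Lorentzian metric $\met$, the smooth family $\{\met_t\}$, the spinorial covariant derivative, the fibrewise metric on $\spinb(M)$ and consequently the lightlike cogeodesic flow and the parallel transport $\mathpzc{P}^{\spinb(M)}$---is spatially $\Gamma$-invariant by the standing assumptions of \Cref{chap:setting}. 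Therefore the canonical relation $\mathsf{C}'_{1\to 2}$ is $\Gamma$-invariant under the diagonal action on $\dot T^\ast\Sigma_2 \times \dot T^\ast\Sigma_1$, and the principal symbols are $\Gamma$-equivariant homomorphisms between the relevant pullback bundles. Combined with the $\Gamma$-equivariance of step one, this places $Q$ and $\tilde{Q}$ in the class $\FIO{0}_\Gamma(\Sigma_1,\Sigma_2;\mathsf{C}'_{1\to 2};\Hom(\spinb^{\pm}(\Sigma_1),\spinb^{\pm}(\Sigma_2)))$.

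The main obstacle is essentially bookkeeping: one must verify that $v = L^{\spinb^{+}(M)}_\gamma u$ actually belongs to the correct finite $\Gamma$-energy space $FE^s_\Gamma(M,[t_1,t_2],\kernel{D^{}_{+}})$, so that uniqueness in \Cref{homivpwellgammatempcomp} may be invoked. This follows from the unitarity of the left action representation with respect to the $\Gamma$-invariant Sobolev norm \clef{soboloevgammanorm} (which preserves the seminorms \clef{snclkgamma}) together with the fact that the spatial $\Gamma$-action commutes with temporal differentiation, so time-continuity and time-regularity of $u$ transfer to $v$. Beyond this, no new symbol calculus or FIO composition is required, since the FIO properties are inherited verbatim from \Cref{Qfourier}.
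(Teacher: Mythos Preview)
Your proof is correct and follows essentially the same approach as the paper: invoke \Cref{Qfourier} for all FIO data and reduce the new content to the intertwining property, which the paper derives from the $\Gamma$-invariance and bijectivity of $\rest{t}$ (\Cref{homivpwellgammatempcomp}) in one line, while you unfold this into the explicit uniqueness argument for the translated solution $v=L^{\spinb^{+}(M)}_\gamma u$. Your additional remarks on the $\Gamma$-invariance of the canonical relation and the principal symbols are not strictly required for membership in $\FIO{0}_\Gamma$ as the paper uses it, but they are correct and do no harm.
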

\begin{proof}
The main steps have been already proven for \Cref{Qfourier}. It is left to show that the Dirac-wave evolution operators are $\Gamma$-invariant, i.e. intertwine the $\Gamma$-actions on $H^s_\Gamma(\spinb^{\pm}_{}(\Sigma_{1}))$ and $H^s_\Gamma(\spinb^{\pm}_{}(\Sigma_{2}))$. As the restriction operator is $\Gamma$-invariant and bijective on $H^s_\Gamma(\spinb^{+}_{}(\Sigma_t))$ for each $t\in [t_1,t_2]$, it implies
\begin{equation*}
L^{\spinb^{+}(\Sigma_2)}_\gamma Q(t_2,t_1)\rest{t_1}=Q(t_2,t_1) L^{\spinb^{+}(\Sigma_1)}_\gamma \rest{t_1} \\
\end{equation*}
and thus the claim; the other case follows due to the same reasoning.
\end{proof}

\section{$\Gamma$-Fredholmness of wave evolution operators}\label{chap:gammaevol}

We present regularity and Fredholmness of $Q$ and $\tilde{Q}$ under boundary conditions which decompose the $L^2$-spaces of spinors on $\Sigma_1$ and $\Sigma_2$. We focus on $\Gamma$-Fredholmness under ordinary (a)APS boundary conditions. 

\subsection{Generalised (a)APS-boundary conditions}\label{chap:evolgammafred-sec:gammagaAPS}

In order to introduce boundary conditions, we need some kind of product structure near the boundary. The collar neighbourhood theorem states that the manifold is diffeomorphic to a product structure near each Cauchy boundary. The metric near the boundary $\bound M= \Sigma_{1}\sqcup \Sigma_2$ can be deformed in such a way that it becomes \textit{ultra-static}: $\met=-\differ t^{\otimes 2}+\met_{t_j}$ near $\Sigma_j$ for $j\in \SET{1,2}$; and each mean curvature $H_{t_j}$ of the spacelike boundary hypersurfaces is vanishing identically. Both Dirac operators are then given by
\begin{equation}\label{diracprodneigh}
D^{}_{\pm}\vert_{\Sigma_j}:= \upbeta\left(\partial_t \mp\Imag A_j \right)=-\upbeta\left(-\partial_t \pm\Imag A_j \right)
\end{equation}
along $\Sigma_j$ with past-directed timelike vector $\mathsf{v}=-\partial_t$, $\upbeta=\cliff{\mathsf{v}}$ and $A_j=A_{t_j}$. The Riemannian hypersurface Dirac operators $A_{1}$ and $A_{2}$ are essentially self-adjoint and the real-valued spectrum of their unique self-adjoint extensions, still denoted as $A_{1}$ and $A_{2}$, decomposes disjointly into a point and continuous spectrum. The eigenspaces for eigenvalues in the point spectrum are orthogonal to each other, but these spaces have in general infinitely many dimensions and the eigensections of the continuous spectrum are smooth, but not square-integrable. \\
\\
Recapitulating our general assumptions, the hypersurface Dirac operators $A_t$ along each slice are $\Gamma$-invariant differential operators of first order: $A_t\in \Diff{1}{\Gamma}(\spinb^{\pm}_{}(\Sigma_t))$ for each $t \in [t_1,t_2]$. Hence they can be viewed as properly supported $\Gamma$-invariant pseudo-differential operators of order $1$. Moreover, they are elliptic and essentially self-adjoint such that $A_t^2$ is an elliptic, positive and properly supported $\Gamma$-invariant pseudo-differential operator for each $t\in [t_1,t_2]$. 
In order to apply \Cref{projsreggenop}, we need to analyse the principal symbol of $A_t$, given by
\vspace*{-1em}
\begin{equation} \label{princsymbA}
{\sigma}_1(A_t)(w,\rho) = \mp \upbeta \Cliff{t}{\rho^\sharp}
\end{equation}
for $w\in \Sigma_t$ and $\rho\in T^\ast\Sigma_t$. The sign depends on the choice of the chirality and the sharp isomorphism is applied with respect to the Riemannian metric $\met_t$. The principal symbol of $A_t^2$ becomes 
\begin{equation}\label{princsymbAsquared}
{\sigma}_2(A^\ast_t\circ A_t)(w,\rho)={\sigma}_2(A^2_t)(w,\rho)=\upbeta \Cliff{t}{\rho^\sharp}\upbeta \Cliff{t}{\rho^\sharp}=\met_{t}(\rho^\sharp,\rho^\sharp)\Iop{\spinb(\Sigma_t)} 
\end{equation} 
which is positive definite. \Cref{projsreggenop} and \Cref{remarkssmoothing} (7) imply that $P_I(t):=\Chi_I(A_t)$ is well-defined for all $t\in [t_1,t_2]$ and all measurable intervals $I\subset \R$; we write  
\begin{equation}\label{gammaapsprojectors}
P_{\pm}(t):=\frac{1}{2}\left(\Iop{}\pm A_t\circ\vert A_t \vert^{-1} - P_{0}(A_t)\right) \in S\ydo{0}{\Gamma}(\spinb^{\pm}(\Sigma)) \quad;
\end{equation} 
if $I$ is bounded, we have $P_I(t)\in S\ydo{-\infty}{\Gamma}(\spinb^{\pm}(\Sigma_t))$.  
\clef{princsymbA}, \clef{princsymbAsquared}, \clef{princsymbcomplexpower} and \clef{princsymprojgen} then imply
\begin{equation*}
{\sigma}_{-1}\left(\absval{A_t}^{-1}\right)(w,\rho)=\left({\sigma}_2(A_t^\ast A_t)(w,\rho)\right)^{-\frac{1}{2}}=\left(\met_{t}\vert_{w}(\rho^\sharp,\rho^\sharp)\right)^{-\frac{1}{2}}=(\norm{\rho}{\met_{t}(w)})^{-1}\Iop{\spinb(\Sigma_t)}
\end{equation*}
and thus
\begin{equation}\label{princsymbproj}
{\sigma}_0(P_{\gtrless 0}(t))(w,\rho)=\frac{1}{2}\left(1\mp(\norm{\rho}{\met_{t}(w)})^{-1}\upbeta \Cliff{t}{\rho^\sharp}\right)\quad .
\end{equation}
The projectors $P_{\gtrless a}(t):=P_{\gtrless a}(A_t)$, $P_{\geq a}(t):=P_{\geq a}(A_tt)$, $P_{\leq a}(t):=P_{\leq a}(A_t)$ and $P^\perp_I(t):=P^\perp_I(A_t)$ are then also defined as in \Cref{chap:specflow-sec:gammaproj} for any $a\in \R$ as well as $I\subset \R$ and satisfy
\begin{equation}\label{sregprojAtanycut}
P_{\gtrless a}(t), P_{\geq a}(t), P_{\leq a}(t), P^\perp_I(t) \in S\ydo{0}{\Gamma}(\spinb^{\pm}(\Sigma_t))\quad.
\end{equation}
After clarifying these details beforehand, we are able to introduce the boundary conditions of our interest. For any $a_1,a_2 \in \R$ the \textit{generalised Atiyah-Patodi-Singer (gAPS) boundary conditions} are defined as follows:
\begin{equation}\label{gapsbc}
\begin{array}{ccl}
P_{\intervallro{a_1}{\infty}{}}(t_1)(u\vert_{\Sigma_{1}}) &=& 0 \\
P_{\intervalllo{-\infty}{a_2}{}}(t_2)(u\vert_{\Sigma_{2}}) &=& 0 \\
\text{for positive chirality} && 
\end{array}
\,\,\quad\text{and}\,\,\quad
\begin{array}{ccl}
P_{\intervallo{-\infty}{a_1}{}}(t_1)(u\vert_{\Sigma_{1}}) &=& 0 \\
P_{\intervallo{a_2}{\infty}{}}(t_2)(u\vert_{\Sigma_{2}}) &=& 0 \\
\text{for negative chirality} && 
\end{array}
\quad.
\end{equation}
Another set of boundary conditions are \textit{generalised anti Atiyah-Patodi-Singer (gaAPS) boundary conditions} which are orthogonal to the gAPS boundary conditions:
\begin{equation}\label{gaapsbc}
\begin{array}{ccl}
P_{\intervallo{-\infty}{a_1}{}}(t_1)(u\vert_{\Sigma_{1}}) &=& 0 \\
P_{\intervallo{a_2}{\infty}{}}(t_2)(u\vert_{\Sigma_{2}}) &=& 0 \\
\text{for positive chirality} &&
\end{array}
\,\,\quad\text{and}\,\,\quad
\begin{array}{ccl}
P_{\intervallro{a_1}{\infty}{}}(t_1)(u\vert_{\Sigma_{1}}) &=& 0 \\
P_{\intervalllo{-\infty}{a_2}{}}(t_2)(u\vert_{\Sigma_{2}}) &=& 0 \\
\text{for negative chirality} && 
\end{array}
\quad.
\end{equation}
The boundary conditions for the negative chirality are chosen to be the adjoint boundary conditions for positive chirality. These boundary conditions induce orthogonal splittings of $L^2$-spaces. We denote the range of the projections on $L^2$-spaces with $L^2_{I}(\spinb^{\pm}_{}(\Sigma_j)):=P_{I}(t_j)[L^2_{}(\spinb^{\pm}_{}(\Sigma_j))]$ and later also with $\range{P_I(t_j)}$; then we can decompose as follows:
\newpage
%
%
\begin{equation}\label{genL2splitgamma}
\begin{array}{ccl}
L^2(\spinb_{}^{+}(\Sigma_1))&=&L^2_{\intervallro{a_1}{\infty}{}}(\spinb_{}^{+}(\Sigma_1))\oplus L^2_{\intervallo{-\infty}{a_1}{}}(\spinb_{}^{+}(\Sigma_1)) \\
L^2(\spinb_{}^{+}(\Sigma_2))&=&L^2_{\intervallo{a_2}{\infty}{}}(\spinb_{}^{+}(\Sigma_2))\oplus L^2_{\intervalllo{-\infty}{a_2}{}}(\spinb_{}^{+}(\Sigma_2)) \\
L^2(\spinb_{}^{-}(\Sigma_1))&=&L^2_{\intervallro{a_1}{\infty}{}}(\spinb_{}^{-}(\Sigma_1))\oplus L^2_{\intervallo{-\infty}{a_1}{}}(\spinb_{}^{-}(\Sigma_1)) \\
L^2(\spinb_{}^{-}(\Sigma_2))&=&L^2_{\intervallo{a_2}{\infty}{}}(\spinb_{}^{-}(\Sigma_2))\oplus L^2_{\intervalllo{-\infty}{a_2}{}}(\spinb_{}^{-}(\Sigma_2)) 
\end{array}
\quad.
\end{equation} 
Because of $\range{P_I(t)}=\kernel{\Iop{}-P_I(t)}$, all subspaces in \clef{genL2splitgamma} are closed for all $a_1,a_2$. Since $P_I(t)$ is s-regular for all $I\subset \R$, it is a $\Gamma$-morphism on $L^2$-spaces in regards to \Cref{remarkssmoothing} (4) such that all the ranges and hence all subspaces in the orthogonal splittings are projective Hilbert $\Gamma$-modules. A further investigation shows that these spaces are even free Hilbert $\Gamma$-modules, i.e. there exists a unitary $\Gamma$-isomorphism $L^2_I(N,E)\cong\ell^2(\Gamma)\otimes L^2_I(N_\Gamma,E_\Gamma)$ for any Hermitian $\Gamma$-vector bundle $E \rightarrow N$ over a $\Gamma$-manifold $N$. This follows from the restriction of the isomorphism \clef{unitarymapl2} to $L^2_I(N,E)$, the $\Gamma$-invariance of the projections $P_I$, coinciding with projectors $\underline{P}_I$ on $L^2(N_\Gamma,E_\Gamma)$, and the density of the fundamental domain in the base manifold as the smallest representative for the $\Gamma$-action.\\
\\
We also introduce projections with domains on the range of another projector: let $a,b \in \R$ and $\blacktriangle, \blacktriangledown$ any symbol in $\SET{<,>,\leq,\geq}$; we define as \textit{restricted projectors} the maps
\begin{equation}\label{restrictedprojections}
\overline{P}^{\blacktriangle a}_{\blacktriangledown b}:=P_{\blacktriangledown b}\,:\,\range{P_{\blacktriangle a}}\rightarrow\range{P_{\blacktriangledown b}} .
\end{equation}
One is tempted to write $P_{[a,b]}$ for $\overline{P}^{\blacktriangle a}_{\blacktriangledown b}$ for $a<b$. However, the former is a bounded operator on $L^2$-spaces, but in general not necessarily Fredholm. The latter operator is the restriction of the former to a closed subspace of $L^2$. It turns out that \clef{restrictedprojections} is in fact $\Gamma$-Fredholm under certain conditions.
\begin{lem}\label{fredrestrictedprojections}
Let $a,b \in \R$, $A$ as in \Cref{projsreggenop} and $P_{\blacktriangle a}$, $P_{\blacktriangledown b}$ spectral projections of $A$ in $L^2$; if $ \codim_\Gamma\left(\range{P_{\blacktriangledown b}} \cap \range{P_{\blacktriangle a}}\right)$ and $\dim_\Gamma\left(\range{P_{\blacktriangle a}}\cap \range{(P_{\blacktriangledown b})^\perp}\right)$ are finite, then $\overline{P}^{\blacktriangle a}_{\blacktriangledown b} \in \mathscr{F}_\Gamma(\range{P_{\blacktriangle a}},\range{P_{\blacktriangledown b}})$ with
\begin{equation*}
\Index_\Gamma(\overline{P}^{\blacktriangle a}_{\blacktriangledown b})=
\dim_\Gamma\left(\range{P_{\blacktriangle a}}\cap \range{(P_{\blacktriangledown b})^\perp}\right) - \codim_\Gamma\left(\range{P_{\blacktriangledown b}} \cap \range{P_{\blacktriangle a}}\right)
\end{equation*}
where 
\begin{equation*}
\codim_\Gamma\left(\range{P_{\blacktriangledown b}}\right)=\dim_\Gamma\left(\quotspace{\range{P_{\blacktriangledown b}}}{\range{P_{\blacktriangledown b}}\cap \range{P_{\blacktriangle a}}}\right) \quad.
\end{equation*}
\end{lem}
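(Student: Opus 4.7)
The plan is to verify the hypotheses of \Cref{propgammafred} (3) to establish $\Gamma$-Fredholmness, and then compute the $\Gamma$-index via \Cref{propgammafred} (2) by identifying the kernel of the Hilbert-space adjoint. The key input is that $P_{\blacktriangle a}$ and $P_{\blacktriangledown b}$ are spectral projections of the same self-adjoint operator $A$, hence mutually commuting $\Gamma$-invariant orthogonal projections on $L^2$, so the relevant subspaces are simultaneously $\Gamma$-invariant and compatible with orthogonal decompositions.

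First I would identify the kernel. For $u\in\range{P_{\blacktriangle a}}$, the equation $\overline{P}^{\blacktriangle a}_{\blacktriangledown b}u=0$ is equivalent to $P_{\blacktriangledown b}u=0$, i.e.\ $u\in\range{P_{\blacktriangledown b}^\perp}$, so
\begin{equation*}
\kernel{\overline{P}^{\blacktriangle a}_{\blacktriangledown b}}=\range{P_{\blacktriangle a}}\cap\range{P_{\blacktriangledown b}^\perp} \quad,
\end{equation*}
which is closed and $\Gamma$-invariant, thus a projective Hilbert $\Gamma$-submodule by \Cref{helpinglemma1}, and has finite $\Gamma$-dimension by hypothesis. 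Next I would exhibit a closed subspace of $\range{\overline{P}^{\blacktriangle a}_{\blacktriangledown b}}$ with finite $\Gamma$-codimension in $\range{P_{\blacktriangledown b}}$. The natural candidate is $W:=\range{P_{\blacktriangledown b}}\cap\range{P_{\blacktriangle a}}$, closed and $\Gamma$-invariant, and contained in $\range{\overline{P}^{\blacktriangle a}_{\blacktriangledown b}}$: for $w\in W$ one has $w\in\range{P_{\blacktriangle a}}$ and $\overline{P}^{\blacktriangle a}_{\blacktriangledown b}(w)=P_{\blacktriangledown b}w=w$. Since $\codim_\Gamma(W)$ is finite by assumption, \Cref{propgammafred} (3) yields $\overline{P}^{\blacktriangle a}_{\blacktriangledown b}\in\mathscr{F}_\Gamma(\range{P_{\blacktriangle a}},\range{P_{\blacktriangledown b}})$.

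For the index I would pass to the Hilbert-space adjoint. Because $P_{\blacktriangle a}$ and $P_{\blacktriangledown b}$ are orthogonal projections on $L^2$, a direct pairing computation shows that the adjoint of $\overline{P}^{\blacktriangle a}_{\blacktriangledown b}$ is $\overline{P}^{\blacktriangledown b}_{\blacktriangle a}\colon\range{P_{\blacktriangledown b}}\to\range{P_{\blacktriangle a}}$, and the argument used for the kernel, now applied symmetrically, gives
\begin{equation*}
\kernel{(\overline{P}^{\blacktriangle a}_{\blacktriangledown b})^\ast}=\range{P_{\blacktriangledown b}}\cap\range{P_{\blacktriangle a}^\perp} \quad.
\end{equation*}
Using the spectral commutativity $P_{\blacktriangle a}P_{\blacktriangledown b}=P_{\blacktriangledown b}P_{\blacktriangle a}$, every $v\in\range{P_{\blacktriangledown b}}$ splits orthogonally as $v=P_{\blacktriangle a}v+(\Iop{}-P_{\blacktriangle a})v$ with $P_{\blacktriangle a}v\in W$ and $(\Iop{}-P_{\blacktriangle a})v\in\range{P_{\blacktriangledown b}}\cap\range{P_{\blacktriangle a}^\perp}$; this decomposition is $\Gamma$-equivariant because all projections involved commute with the left $\Gamma$-action. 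The resulting unitary $\Gamma$-isomorphism $\quotspace{\range{P_{\blacktriangledown b}}}{W}\cong\range{P_{\blacktriangledown b}}\cap\range{P_{\blacktriangle a}^\perp}$ forces equality of the associated $\Gamma$-dimensions by \Cref{propgammadim} (5), and substitution into \Cref{propgammafred} (2) yields the claimed index formula.

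The main subtlety is not the Fredholm argument as such but the $\Gamma$-bookkeeping in the last step: one must check that the orthogonal splitting of $\range{P_{\blacktriangledown b}}$ into $W$ and $\range{P_{\blacktriangledown b}}\cap\range{P_{\blacktriangle a}^\perp}$ is genuinely a splitting of Hilbert $\Gamma$-modules, so that the $\Gamma$-dimension of the cokernel can be read off as $\codim_\Gamma W$. This reduces to the observation from \Cref{projsreggenop} together with the remark following \Cref{kordyuresultsgamma} that all spectral projections of the $\Gamma$-invariant self-adjoint operator $A$ are themselves $\Gamma$-invariant, which also ensures that $W$ and its complement are projective Hilbert $\Gamma$-submodules via \Cref{helpinglemma1}.
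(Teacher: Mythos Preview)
Your proof is correct and follows essentially the same route as the paper. Both arguments identify the kernel as $\range{P_{\blacktriangle a}}\cap\range{P_{\blacktriangledown b}^\perp}$, both use $W=\range{P_{\blacktriangledown b}}\cap\range{P_{\blacktriangle a}}$ as the relevant closed subspace of the range, and both finish by identifying the cokernel with $W^\perp$ in $\range{P_{\blacktriangledown b}}$ and matching its $\Gamma$-dimension with the quotient. The only cosmetic differences are that the paper asserts $\range{\overline{P}^{\blacktriangle a}_{\blacktriangledown b}}=W$ exactly (using commutativity) and then invokes the closed range theorem to obtain $\kernel{(\overline{P}^{\blacktriangle a}_{\blacktriangledown b})^\ast}=W^\perp$, whereas you use only the inclusion $W\subset\range{\overline{P}^{\blacktriangle a}_{\blacktriangledown b}}$ together with \Cref{propgammafred} (3), and you compute the adjoint explicitly as $\overline{P}^{\blacktriangledown b}_{\blacktriangle a}$; these lead to the same identification and the same index formula.
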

\begin{proof}
The range and the kernel of $\overline{P}^{\blacktriangle a}_{\blacktriangledown b}$ can be directly calculated: 
\begin{equation*}
\kernel{\overline{P}^{\blacktriangle a}_{\blacktriangledown b}}=\range{(P_{\blacktriangledown b})^\perp}\cap \range{P_{\blacktriangle a}} \quad\text{and}\quad\range{\overline{P}^{\blacktriangle a}_{\blacktriangledown b}}=\range{P_{\blacktriangledown b}} \cap \range{P_{\blacktriangle a}} \quad.
\end{equation*}
The kernel has finite $\Gamma$-dimension by preassumption. Since all ranges are closed subsets of $L^2$, the range of $\overline{P}^{\blacktriangle a}_{\blacktriangledown b}$ as intersection of closed sets does as well and has finite $\Gamma$-codimension by assumption.\\
\\
In order to calculate the $\Gamma$-index, it is left to compute the kernel of the formal adjoint of $\overline{P}^{\blacktriangle a}_{\blacktriangledown b}$. As $\overline{P}^{\blacktriangle a}_{\blacktriangledown b}$ has closed range by construction, the closed range theorem implies that exactly this kernel is given by
$$ (\range{P_{\blacktriangledown b}}\cap \range{P_{\blacktriangle a}})^\perp $$
where the orthogonal complement has to be taken in $\range{P_{\blacktriangledown b}}$ as ambient Hilbert space. The intersection of Hilbert $\Gamma$-modules is in general not a Hilbert $\Gamma$-module. But since this intersection of ranges results again in a range of a spectral projection $P_{I(a,b)}:=\range{P_{\blacktriangledown b}}\cap \range{P_{\blacktriangle a}}, I(a,b)\subset \R$, $\range{P_{I(a,b)}}$ is again a projective Hilbert $\Gamma$-module. \Cref{helpinglemma1} implies further that the orthogonal complement of $\range{P_{I(a,b)}}$ is a projective Hilbert $\Gamma$-module. The same holds true for the quotient $\range{P}_{\blacktriangledown b}/\range{P_{I(a,b)}}$.  
As $\range{P_{I(a,b)}}$ is closed, the canonical isomorphy between the quotient and the orthogonal complement implies a unitary $\Gamma$-isomorphism such that
$$\dim_\Gamma\left(\range{P^\perp_{I(a,b)}}\right)=\dim_\Gamma\left(\quotspace{\range{P}_{\blacktriangledown b}}{\range{P_{I(a,b)}}}\right)$$
according to \Cref{propgammadim} (5). The right-hand side coincides with the $\Gamma$-codimension of $\range{P_{I(a,b)}}$ which shows the claimed $\Gamma$-index formula.
\end{proof}

\subsection{$\Gamma$-Fredholmness for (a)APS boundary conditions}\label{chap:evolgammafred-sec:gammafredQ}

We concentrate our analysis to (a)APS boundary conditions and consider the decomposition of $L^2$-spaces \clef{genL2splitgamma} for $a_1=0=a_2$. \\
\\
If we apply the splittings \clef{genL2splitgamma} due to APS and aAPS boundary conditions for positive chirality to $Q(t_2,t_1)$, it allows us to rewrite the operator as a ($2\times 2$)-matrix:
\begin{equation}\label{Qmatrix}
Q(t_2,t_1)=\left(\begin{matrix}
Q_{++}(t_2,t_1) & Q_{+-}(t_2,t_1) \\
Q_{-+}(t_2,t_1) & Q_{--}(t_2,t_1)
\end{matrix}\right) \quad .
\end{equation}
We define the entries as maps of the form 
\begin{equation}\label{Qmatrixpos}
\begin{array}{ccccc}
Q_{++}(t_2,t_1)&:& L^2_{}(\spinb^{+}(\Sigma_1)) &\rightarrow & L^2_{(0,\infty)}(\spinb^{+}(\Sigma_2))\\
&& u &\mapsto & P_{>0}(t_2)\circ Q(t_2,t_1)\circ P_{\geq 0}(t_1)u \\
Q_{--}(t_2,t_1)&:& L^2_{}(\spinb^{+}(\Sigma_1)) &\rightarrow & L^2_{(-\infty,0]}(\spinb^{+}(\Sigma_2))\\
&& u &\mapsto & P_{\leq 0}(t_2)\circ Q(t_2,t_1)\circ P_{< 0}(t_1)u \\
Q_{+-}(t_2,t_1)&:& L^2_{}(\spinb^{+}(\Sigma_1)) &\rightarrow & L^2_{(0,\infty)}(\spinb^{+}(\Sigma_2))\\
&& u &\mapsto & P_{>0}(t_2)\circ Q(t_2,t_1)\circ P_{< 0}(t_1)u \\
Q_{-+}(t_2,t_1) &:& L^2_{}(\spinb^{+}(\Sigma_1)) &\rightarrow & L^2_{(-\infty,0]}(\spinb^{+}(\Sigma_2))\\
&& u &\mapsto & P_{\leq 0}(t_2)\circ Q(t_2,t_1)\circ P_{\geq 0}(t_1)u
\end{array}
\quad.
\end{equation}
These matrix entries will be referred on as \textit{spectral} or rather \textit{matrix entries}. We call $Q_{\pm\pm}$ the \textit{diagonal}\bnote{F34} and $Q_{\pm\mp}$ the \textit{off-diagonal entries} of $Q$. We will also need these operators, acting as maps on spectral subspaces of $L^2(\spinb^{+}(\Sigma_1))$:
\begin{equation}\label{Qmatrixposrest}
\begin{array}{lcccl}
Q_{++}(t_2,t_1)= P_{> 0}(t_2)\circ Q(t_2,t_1)&:&  L^2_{[0,\infty)}(\spinb^{+}(\Sigma_1)) &\rightarrow & L^2_{(0,\infty)}(\spinb^{+}(\Sigma_2)) \\
Q_{--}(t_2,t_1)= P_{\leq 0}(t_2)\circ Q(t_2,t_1) &:& L^2_{(-\infty,0)}(\spinb^{+}(\Sigma_1)) &\rightarrow & L^2_{(-\infty,0]}(\spinb^{+}(\Sigma_2)) \\
Q_{+-}(t_2,t_1)= P_{> 0}(t_2)\circ Q(t_2,t_1) &:& L^2_{(-\infty,0)}(\spinb^{+}(\Sigma_1))  &\rightarrow & L^2_{(0,\infty)}(\spinb^{+}(\Sigma_2)) \\
Q_{-+}(t_2,t_1)= P_{\leq 0}(t_2)\circ Q(t_2,t_1) &:& L^2_{[0,\infty)}(\spinb^{+}(\Sigma_1)) &\rightarrow & L^2_{(-\infty,0]}(\spinb^{+}(\Sigma_2))
\end{array}
\, .
\end{equation}
Any spectral entry is a $\Gamma$-morphism. As the spectral subspaces of $L^2$ are closed free Hilbert $\Gamma$-modules, all entries in \clef{Qmatrixposrest} have closed range. The commuting with the left action representation is clear. 
The unitarity property on $L^2$-sections of $Q$ implies that the off-diagonal entries are isomorphisms between the kernels of the diagonal entries and their adjoints.
\begin{lem}\label{kernelisoQ}
The operators $Q_{+-}(t_2,t_1)$ and $Q_{-+}(t_2,t_1)$ restrict to $\Gamma$-isomorphisms 
\begin{equation*}
\begin{array}{ccccc}
Q_{+-}(t_2,t_2)&:&\kernel{Q_{--}(t_2,t_1)}&\rightarrow& \kernel{(Q_{++}(t_2,t_1))^{\ast}} \\[2pt]
Q_{-+}(t_2,t_2)&:&\kernel{Q_{++}(t_2,t_1)}&\rightarrow& \kernel{(Q_{--}(t_2,t_1))^{\ast}} 
\end{array}
\quad .
\end{equation*}
\end{lem}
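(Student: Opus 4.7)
The plan is to exploit the block-matrix decomposition \clef{Qmatrix} together with the unitarity of $Q(t_2,t_1)$ on $L^2$-spaces (cf.\ \cite[Lem.5.2]{OD1}) and the orthogonality of the spectral subspaces in \clef{genL2splitgamma}. I would treat the map $Q_{+-}(t_2,t_1)\vert_{\kernel{Q_{--}(t_2,t_1)}}$ in detail; the argument for $Q_{-+}(t_2,t_1)\vert_{\kernel{Q_{++}(t_2,t_1)}}$ is symmetric. Throughout, write $Q$, $Q_{\pm\pm}$, $Q_{\pm\mp}$ suppressing the arguments $(t_2,t_1)$, and abbreviate $P^{(j)}_{\blacktriangle}:=P_{\blacktriangle}(t_j)$ for $\blacktriangle\in\SET{>0,\geq 0,<0,\leq 0}$.

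First I would show that $Q_{+-}$ indeed lands in $\kernel{Q_{++}^{\ast}}$ on the kernel of $Q_{--}$. Let $u\in \kernel{Q_{--}}\subset L^2_{(-\infty,0)}(\spinb^{+}(\Sigma_1))$. By definition $P^{(2)}_{\leq 0}Qu=0$, hence $Qu=P^{(2)}_{>0}Qu=Q_{+-}u$. For arbitrary $v\in L^2_{[0,\infty)}(\spinb^{+}(\Sigma_1))$, the self-adjointness of $P^{(2)}_{>0}$ and the unitarity of $Q$ yield
\begin{equation*}
\dscal{1}{L^2(\spinb^{+}(\Sigma_2))}{Q_{+-}u}{Q_{++}v}=\dscal{1}{}{P^{(2)}_{>0}Qu}{P^{(2)}_{>0}Qv}=\dscal{1}{}{Qu}{Qv}=\dscal{1}{L^2(\spinb^{+}(\Sigma_1))}{u}{v}=0,
\end{equation*}
so $Q_{+-}u\in (\range{Q_{++}})^{\perp}=\kernel{Q_{++}^{\ast}}$. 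Injectivity is immediate: $Q_{+-}u=0$ together with $Q_{--}u=0$ forces $Qu=Q_{+-}u+Q_{--}u=0$, hence $u=0$ by the unitarity, and in fact the same computation gives $\norm{Q_{+-}u}{}=\norm{Qu}{}=\norm{u}{}$, i.e.\ the restriction is an isometry.

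For surjectivity, given $w\in \kernel{Q_{++}^{\ast}}\subset L^2_{(0,\infty)}(\spinb^{+}(\Sigma_2))$, I set $u:=Q^{\ast}w\in L^2(\spinb^{+}(\Sigma_1))$ and split it as $u=u_{+}+u_{-}$ with $u_{\pm}$ in $L^2_{[0,\infty)}$ respectively $L^2_{(-\infty,0)}$. For any $v\in L^2_{[0,\infty)}(\spinb^{+}(\Sigma_1))$ one has
\begin{equation*}
\dscal{1}{}{u}{v}=\dscal{1}{}{Q^{\ast}w}{v}=\dscal{1}{}{w}{Qv}=\dscal{1}{}{w}{Q_{++}v}+\dscal{1}{}{w}{Q_{-+}v}=0,
\end{equation*}
where the first summand vanishes because $w\in\kernel{Q_{++}^{\ast}}$ and the second because $L^2_{(0,\infty)}(\spinb^{+}(\Sigma_2))\perp L^2_{(-\infty,0]}(\spinb^{+}(\Sigma_2))$. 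Testing against $v=u_{+}$ gives $\norm{u_{+}}{}=0$, hence $u=u_{-}\in L^2_{(-\infty,0)}(\spinb^{+}(\Sigma_1))$. Then $Qu=QQ^{\ast}w=w\in L^2_{(0,\infty)}(\spinb^{+}(\Sigma_2))$ so that $Q_{--}u=P^{(2)}_{\leq 0}w=0$ and $Q_{+-}u=P^{(2)}_{>0}w=w$, establishing surjectivity.

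Finally, $\Gamma$-invariance of the restriction is inherited because $Q$ is a $\Gamma$-morphism by \Cref{propsofpropgamma}, the spectral projections $P^{(j)}_{\blacktriangle}$ commute with the left action representations by \clef{sregprojAtanycut} and the discussion preceding it, so $Q_{+-}$ and $Q_{--}$ are $\Gamma$-morphisms, and consequently $\kernel{Q_{--}}$ and $\kernel{Q_{++}^{\ast}}$ are $\Gamma$-invariant closed subspaces. The argument for the pair $(Q_{-+},Q_{++})$ proceeds by simply swapping the roles of the two spectral halves throughout and using that $P^{(1)}_{\geq 0}+P^{(1)}_{<0}=\Iop{}$ and $P^{(2)}_{>0}+P^{(2)}_{\leq 0}=\Iop{}$. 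The only real subtlety, and the one point I would write out most carefully, is the bookkeeping of the strict/non-strict inclusions of $0$ in the spectral intervals on $\Sigma_1$ versus $\Sigma_2$ as dictated by \clef{Qmatrixposrest}; once this is kept straight, the unitarity of $Q$ drives everything.
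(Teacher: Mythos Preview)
Your proof is correct and follows precisely the purely algebraic route that the paper defers to \cite[Lem.~2.5]{BaerStroh}: you exploit unitarity of $Q$ together with the orthogonality of the spectral subspaces to show that the restriction is an isometric bijection, and then observe that $\Gamma$-invariance is inherited from that of $Q$ and the projections. The paper does not spell this out, so your write-up is in fact more detailed than the paper's one-line citation, but the underlying argument is the same.
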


The purely algebraic proof can be taken from \cite[Lem.2.5]{BaerStroh}.\\
\\
Because of its later importance, we write out $(Q(t_2,t_1))^\ast Q(t_2,t_1)=\Iop{L^2_{}(\spinb^{+}(\Sigma_{1}))}$ and $Q(t_2,t_1)(Q(t_2,t_1))^{\ast}=\Iop{L^2_{}(\spinb^{+}(\Sigma_{2}))}$ and get the following system of equations for the non-trivial entries:
\begin{equation}\label{Qsys1}
\begin{array}{ccl}
(Q_{++}(t_2,t_1))^\ast Q_{++}(t_2,t_1)+(Q_{-+}(t_2,t_1))^\ast Q_{-+}(t_2,t_1)&=& \Iop{} \\
(Q_{+-}(t_2,t_1))^\ast Q_{+-}(t_2,t_1)+(Q_{--}(t_2,t_1))^\ast Q_{--}(t_2,t_1)&=& \Iop{} 
\end{array}
\end{equation}
and  
\begin{equation}\label{Qsys2}
\begin{array}{ccl}
Q_{++}(t_2,t_1)(Q_{++}(t_2,t_1))^\ast+Q_{+-}(t_2,t_1))(Q_{+-}(t_2,t_1))^\ast &=& \Iop{} \\
Q_{-+}(t_2,t_1)(Q_{-+}(t_2,t_1))^\ast+Q_{--}(t_2,t_1))(Q_{--}(t_2,t_1))^\ast&=& \Iop{}
\end{array}
\quad.
\end{equation}
If we apply \clef{genL2splitgamma} for $a_1=0=a_2$ and negative chirality to $\tilde{Q}(t_2,t_1)$, we get a similar description as (2x2)-matrix:
\begin{equation}\label{Qnegmatrix}
\tilde{Q}(t_2,t_1)=\left(\begin{matrix}
\tilde{Q}_{++}(t_2,t_1) & \tilde{Q}_{+-}(t_2,t_1) \\
\tilde{Q}_{-+}(t_2,t_1) & \tilde{Q}_{--}(t_2,t_1)
\end{matrix}\right) \quad 
\end{equation}
where the matrix entries are analogously defined via
\begin{equation}\label{Qmatrixneg}
\begin{array}{ccccc}
\tilde{Q}_{++}(t_2,t_1)&:& L^2_{}(\spinb^{-}(\Sigma_1)) &\rightarrow & L^2_{(0,\infty)}(\spinb^{-}(\Sigma_2))\\
&& u &\mapsto & P_{>0}(t_2)\circ \tilde{Q}(t_2,t_1)\circ P_{\geq 0}(t_1)u \\
\tilde{Q}_{--}(t_2,t_1)&:& L^2_{}(\spinb^{-}(\Sigma_1)) &\rightarrow & L^2_{(-\infty,0]}(\spinb^{-}(\Sigma_2))\\
&& u &\mapsto & P_{\leq 0}(t_2)\circ \tilde{Q}(t_2,t_1)\circ P_{< 0}(t_1)u \\
\tilde{Q}_{+-}(t_2,t_1)&:& L^2_{}(\spinb^{-}(\Sigma_1)) &\rightarrow & L^2_{(0,\infty)}(\spinb^{-}(\Sigma_2))\\
&& u &\mapsto & P_{>0}(t_2)\circ \tilde{Q}(t_2,t_1)\circ P_{< 0}(t_1)u \\
\tilde{Q}_{-+}(t_2,t_1) &:& L^2_{}(\spinb^{-}(\Sigma_1)) &\rightarrow & L^2_{(-\infty,0]}(\spinb^{-}(\Sigma_2))\\
&& u &\mapsto & P_{\leq 0}(t_2)\circ \tilde{Q}(t_2,t_1)\circ P_{\geq 0}(t_1)u \quad.
\end{array}
\end{equation}
As in the case for positive chirality, these matrix entries are $\Gamma$-morphisms with closed ranges. Since $\tilde{Q}$ is equally a unitary $\Gamma$-morphism on $L^2$, its matrix representation \clef{Qnegmatrix} implies a similar set of equations in \clef{Qsys1} and \clef{Qsys2} with $Q$ replaced by $\tilde{Q}$. \Cref{kernelisoQ} for negative chirality can be proven equally.\\
\\  
We have seen how unitarity of the Dirac-wave evolution operators as $\Gamma$-morphisms on $L^2$ carry over to the matrix entries in \clef{Qmatrix} and \clef{Qnegmatrix}. We now want to clarify how their regularity as Fourier integral operator transfers to the spectral entries. We will show that these compositions are s-regular Fourier integral operators of the same order and with same canonical relation.

\begin{prop}\label{propQposneg}
The operators in \clef{Qmatrixpos} and \clef{Qmatrixneg} are $\Gamma$-invariant Fourier integral operators of order $0$ modulo s-smoothing remainders, i.e.
\begin{itemize}
\item[(1)] $Q_{\pm\pm}(t_2,t_1), Q_{\pm\mp}(t_2,t_1) \in S\FIO{0}_\Gamma(\Sigma_1,\Sigma_2;\mathsf{C}'_{1\rightarrow 2};\Hom(\spinb^{+}(\Sigma_1),\spinb^{+}(\Sigma_2)))$;
\item[(2)] $\tilde{Q}_{\pm\pm}(t_2,t_1), \tilde{Q}_{\pm\mp}(t_2,t_1) \in S\FIO{0}_\Gamma(\Sigma_1,\Sigma_2;\mathsf{C}'_{1\rightarrow 2};\Hom(\spinb^{-}(\Sigma_1),\spinb^{-}(\Sigma_2)))$. 
\end{itemize}
\end{prop}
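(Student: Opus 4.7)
The strategy is to exploit the fact that each projector appearing as a factor in \clef{Qmatrixpos} and \clef{Qmatrixneg} is an s-regular $\Gamma$-pseudo-differential operator of order $0$, as established in \clef{sregprojAtanycut}, while $Q(t_2,t_1)$ and $\tilde{Q}(t_2,t_1)$ are $\Gamma$-invariant, properly supported Fourier integral operators of order $0$ with canonical relation $\mathsf{C}'_{1\rightarrow 2}$ by \Cref{propsofpropgamma}. The composition of a pseudo-differential operator of order $0$ with a Fourier integral operator of order $0$ (from either side) is again a Fourier integral operator of order $0$ with the same canonical relation, since the canonical relation of a pseudo-differential operator is the diagonal of $T^\ast\Sigma_t\times T^\ast\Sigma_t$ and so the composed canonical relation coincides with $\mathsf{C}'_{1\rightarrow 2}$. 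Transversality of the intersection for the composition is automatic here, because the diagonal is transversal to any canonical relation in the relevant sense.

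To keep track of the s-smoothing part, I would split each spectral projector as $P=\hat P+R$ with $\hat P\in\ydo{0}{\Gamma,\mathsf{prop,cl}}(\spinb^{\pm}(\Sigma_t))$ properly supported and $R\in S\ydo{-\infty}{\Gamma}(\spinb^{\pm}(\Sigma_t))$, and expand a typical matrix entry, e.g. $Q_{++}(t_2,t_1)=P_{>0}(t_2)\circ Q(t_2,t_1)\circ P_{\geq 0}(t_1)$, into four terms according to this decomposition. The main term $\hat P_{>0}(t_2)\circ Q(t_2,t_1)\circ \hat P_{\geq 0}(t_1)$ is a composition of properly supported $\Gamma$-invariant operators and hence lies in $\FIO{0}_{\Gamma,\mathsf{prop}}(\Sigma_1,\Sigma_2;\mathsf{C}'_{1\rightarrow 2};\Hom(\spinb^{+}(\Sigma_1),\spinb^{+}(\Sigma_2)))$ by standard Fourier integral calculus; its principal symbol is obtained by pointwise multiplication of the symbols of the projectors (pulled back via the canonical transformation where appropriate) with \clef{Qprincsymbol}. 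Each of the remaining three terms contains at least one factor in $S\ydo{-\infty}{\Gamma}$, so by \Cref{remarkssmoothing}~(4) and the mapping properties of $Q(t_2,t_1)$ between $\Gamma$-Sobolev spaces of any order, these remainders map $H^{r}_\Gamma\to H^{p}_\Gamma$ continuously for arbitrary $r,p\in\R$; they are therefore s-smoothing in the sense of \Cref{ssmoothing}.

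The $\Gamma$-invariance of the resulting operators is immediate, as each factor in the composition commutes with the left action representation: the projectors by construction (since $A_t$ is $\Gamma$-invariant and functional calculus preserves this), and $Q(t_2,t_1)$, $\tilde{Q}(t_2,t_1)$ by \Cref{propsofpropgamma}. The same decomposition argument applies verbatim to the off-diagonal entries $Q_{\pm\mp}$ as well as to the negative-chirality versions $\tilde{Q}_{\pm\pm}$ and $\tilde{Q}_{\pm\mp}$, with $Q$ replaced by $\tilde Q$ and the corresponding principal symbol taken from \clef{Qnegprincsymbol}.

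The main obstacle is the bookkeeping of the s-smoothing remainders: one must verify that mixed terms of the form $R\circ Q$ or $Q\circ R$ (and $R\circ Q\circ R$) are genuinely s-smoothing, which rests on the fact that $Q$ is $\Gamma$-bounded between $H^s_\Gamma$-spaces of arbitrary orders, whereas the s-smoothing character only controls continuity between $\Gamma$-Sobolev spaces and not between the larger classes of $H^s_\loc$-sections. Provided one restricts the whole analysis to the $\Gamma$-Sobolev scale, where $Q(t_2,t_1)\in\mathscr{B}_\Gamma(H^s_\Gamma,H^s_\Gamma)$ for all $s$, the absorption works cleanly and the proposition follows.
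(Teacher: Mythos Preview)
Your proposal is correct and follows essentially the same route as the paper: split each spectral projector into a properly supported classical $\Gamma$-pseudo-differential part and an s-smoothing remainder, expand the triple composition, identify the main term as a properly supported $\Gamma$-FIO of order $0$ with canonical relation $\mathsf{C}'_{1\rightarrow 2}$, and absorb the cross-terms into the s-smoothing class via the $H^s_\Gamma$-mapping properties of $Q$. The paper's proof is organised identically, and your final remark about the bookkeeping of terms like $R\circ Q$ is exactly the point the paper addresses when it checks that the mixed triple compositions are s-smoothing.
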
 
\begin{proof}
We prove (1) of this assertion as the following arguments can be transferred to the proof of (2) by replacing $Q$ with $\tilde{Q}$.\\
\\
$P_{\pm}(t_j)$ and $P_{>0}(t_j)$ are elements in $S\ydo{0}{\Gamma}(\Sigma_j,\spinb^{+}(\Sigma_j))$. Due to \Cref{remslabel1} (i), they can be decomposed as $P_{\pm}(t_j)=p_{\pm}(t_j)+r_{\pm}(t_j)$ where $r_{\pm}(t_j)\in S\ydo{-\infty}{\Gamma}(\Sigma_j,\spinb^{+}(\Sigma_j))$ and $p_{\pm}(t_j)$ are properly supported classical $\Gamma$-pseudo-differential operators of order $0$. The other projections $P_{\geq 0}(t_j)$ and $P_{\leq 0}(t_j)$ differ from $P_{>0}(t_j)$ respectively $P_{<0}(t_j)$ in a s-smoothing projector $P_{0}(t_j)$. We write $\tilde{r}_{\pm}(t_j)$ for $r_{\pm}(t_j)+P_{0}(t_j)$ such that 
\begin{equation*}
P_{\geq 0}(t_j) =p_{+}(t_j)+\tilde{r}_{+}(t_j)\quad \text{and} \quad P_{\leq 0}(t_j) =p_{-}(t_j)+\tilde{r}_{-}(t_j) \quad.
\end{equation*}
Each spectral entry in \clef{Qmatrixpos} can be split up into a sum of a properly supported $\Gamma$-Fourier integral operator and a s-smoothing operator:  
\begin{align*}
Q_{\pm\pm}(t_2,t_1)&= q_{\pm\pm}(t_2,t_1)+ R_{\pm\pm}(t_2,t_1)\quad \text{with} \quad q_{\pm\pm}(t_2,t_1):=p_{\pm}(t_2)\circ Q\circ p_{\pm}(t_1) \\
Q_{\pm\mp}(t_2,t_1)&= q_{\pm\mp}(t_2,t_1)+ R_{\pm\mp}(t_2,t_1)\quad \text{with} \quad q_{\pm\mp}(t_2,t_1):=p_{\pm}(t_2)\circ Q\circ p_{\mp}(t_1)  .
\end{align*} 
The remainders $R_{\pm\pm}(t_2,t_1)$ and $R_{\pm\mp}(t_2,t_1)$ are sums of triple compositions between s-smoothing pseudo-differential operators and the properly supported Fourier integral operator $Q$. Compositions, containing the properly supported pseudo-differential operator $p_{\pm}$ of order $0$, are well-defined and s-smoothing because composing with $Q$ is well-defined and the second composition with either $r_{\pm}$ or $\tilde{r}_{\pm}$ gives a s-smoothing operator. The remaining triple composition of the form $\tilde{r}\circ Q \circ r$ is equally s-smoothing wherefore $R_{\pm\pm}$ and $R_{\pm \mp}$ are s-smoothing.\\
\\
The triple compositions $q_{\pm\pm}$ and $q_{\pm\mp}$ of properly supported operators are properly supported Fourier integral operators of order $0$ with canonical relation \clef{Qcanrel} since the composition of $N^\ast\mathrm{diag}(\Sigma_j)$ and $\mathsf{C}_{1\rightarrow 2}$ is proper, transversal, and results in $\mathsf{C}_{1\rightarrow 2}$:
\begin{equation}\label{specentrypropsup}
q_{\pm\pm}(t_2,t_1), q_{\pm\mp}(t_2,t_1) \in \FIO{0}_{\Gamma,\mathsf{prop}}(\Sigma_2;\mathsf{C}'_{1\rightarrow 2};\Hom(\spinb^{+}(\Sigma_1),\spinb^{+}(\Sigma_2)))\quad.
\end{equation}
The intertwining of the left action representations is clear since every part in the composition is a $\Gamma$-invariant operator on its own right.
\end{proof}
The principal symbols of the compositions can be calculated by multiplication since the character of $\mathsf{C}_{1\rightarrow 2}$, being a disjoint union of graphs of symplectomorphisms, has been preserved. In doing so, we observe that we can improve the result for the off-diagonal entries.
\begin{prop}\label{propQposnegfredimprov}
\begin{itemize}
\item[]
\item[(1)] $Q_{\pm\mp}(t_2,t_1) \in S\FIO{-1}_\Gamma(\Sigma_1,\Sigma_2;\mathsf{C}'_{1\rightarrow 2};\Hom(\spinb^{+}(\Sigma_1),\spinb^{+}(\Sigma_2)))$;
\item[(2)] $\tilde{Q}_{\pm\mp}(t_2,t_1) \in S\FIO{-1}_\Gamma(\Sigma_1,\Sigma_2;\mathsf{C}'_{1\rightarrow 2};\Hom(\spinb^{-}(\Sigma_1),\spinb^{-}(\Sigma_2)))$.
\end{itemize}
\end{prop}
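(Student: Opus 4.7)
By \Cref{propQposneg} the off-diagonal entries already belong to $S\FIO{0}_\Gamma$ with canonical relation $\mathsf{C}'_{1\rightarrow 2}$, so the plan is to show that their order-zero principal symbols vanish identically on every branch $\mathsf{C}_{1\rightarrow 2\vert\varepsilon}$, $\varepsilon\in\SET{+,-}$, which then forces the order to drop by one. Each $\mathsf{C}_{1\rightarrow 2\vert\varepsilon}$ is the graph of a canonical transformation and the projectors $P_{\pm}(t_j)$ are, modulo $s$-smoothing, properly supported pseudo-differential operators with diagonal canonical relation; consequently the compositions $P_{\pm}(t_2)\circ Q\circ P_{\mp}(t_1)$ are transverse and the principal symbol on $\mathsf{C}_{1\rightarrow 2\vert\varepsilon}$ is the pointwise product of the three principal symbols given in \clef{princsymbproj} and \clef{Qprincsymbol}; the same holds for $\tilde Q_{\pm\mp}$ using \clef{Qnegprincsymbol}.

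The central algebraic input is the following. Writing $\mathsf{a}(w,\rho):=\norm{\rho}{\met_t}^{-1}\upbeta\,\Cliff{t}{\rho^\sharp}$, the identities $\upbeta^2=\Iop{}$, $\Cliff{t}{\rho^\sharp}^2=-\norm{\rho}{\met_t}^2\Iop{}$ and $\SET{\upbeta,\Cliff{t}{\rho^\sharp}}=0$ yield $\mathsf{a}^2=\Iop{}$, so that $\sigma_0(P_{\gtrless 0}(t))=\tfrac{1}{2}(\Iop{}\mp\mathsf{a})$, and a short computation gives
\[
(\Iop{}\mp\mathsf{a}(x,\xi_\pm))\bigl(\mp\norm{\xi_\pm}{\met_{t_2}}\upbeta+\Cliff{t_2}{\xi^\sharp_\pm}\bigr)=0.
\]
Plugged into \clef{Qprincsymbol} this produces $\sigma_0(P_{>0}(t_2))\,\sigma_0(Q)|_{+}=0$ on $\mathsf{C}_{1\rightarrow 2\vert +}$ and $\sigma_0(P_{\leq 0}(t_2))\,\sigma_0(Q)|_{-}=0$ on $\mathsf{C}_{1\rightarrow 2\vert -}$; equivalently, on each branch $\sigma_0(Q)$ factors on the left through the opposite-sign projection at $\Sigma_2$. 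The sign flip in \clef{Qnegprincsymbol} produces the mirrored left-factorisation for $\tilde Q$.

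The complementary step is the dual factorisation on $\Sigma_1$: I want to show that $\sigma_0(Q)|_\varepsilon$ also factors on the right through the corresponding opposite-sign projection at $\Sigma_1$. The strategy is to push the trailing $\upbeta$ in \clef{Qprincsymbol} through a candidate projection via the intertwining $\upbeta\,\sigma_0(P_{>0}(t_1))=\sigma_0(P_{<0}(t_1))\,\upbeta$, and then conjugate $\mathpzc{P}^{\spinb(M)}$ through the remaining Clifford factor using $\mathpzc{P}\,\Cliff{t_1}{V}=\Cliff{t_2}{V^{\mathsf{trans}}}\,\mathpzc{P}$. In the ultra-static gauge \clef{diracprodneigh} the two null directions are $\zeta_\varepsilon^\sharp=-\varepsilon\norm{\eta}{\met_{t_1}}\mathsf{v}+\eta^\sharp$ and $\varsigma_\varepsilon^\sharp=-\varepsilon\norm{\xi_\varepsilon}{\met_{t_2}}\mathsf{v}+\xi_\varepsilon^\sharp$, and along the null geodesic the null (co)tangent is preserved, so $(\zeta_\varepsilon^\sharp)^{\mathsf{trans}}=\varsigma_\varepsilon^\sharp$; this lets me express $(\eta^\sharp)^{\mathsf{trans}}$ in terms of $\xi_\varepsilon^\sharp$, $\mathsf{v}$ and $\mathsf{v}^{\mathsf{trans}}$ and reduces the right-hand factorisation to the very same Clifford identity evaluated at $(y,\eta)$. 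Combining the two factorisations yields $\sigma_0(Q_{\pm\mp})|_\varepsilon=\sigma_0(P_{\pm}(t_2))\,\sigma_0(Q)|_\varepsilon\,\sigma_0(P_{\mp}(t_1))=0$ on both branches, and an identical argument with \clef{Qnegprincsymbol} handles $\tilde Q_{\pm\mp}$.

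The hard part is the right-hand factorisation: the left-hand identity is direct Clifford algebra, but the right-hand one has to be transported through the spinor parallel transport along a null geodesic. Since $\mathpzc{P}^{\spinb(M)}$ preserves the lightlike generator but not the Cauchy hypersurface normal $\mathsf{v}$, the decomposition of $\mathsf{v}^{\mathsf{trans}}$ into a $\mathsf{v}$-part and a $\Sigma_2$-tangential part at $x$ has to be pinned down precisely; in the ultra-static gauge near $\bound M$ this is closed by $\varsigma_\varepsilon^\sharp=(\zeta_\varepsilon^\sharp)^{\mathsf{trans}}$ together with the explicit null-decomposition of the two covectors.
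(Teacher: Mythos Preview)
Your overall strategy matches the paper's exactly: show that the order-zero principal symbol of the off-diagonal entries vanishes on both branches of $\mathsf{C}_{1\to 2}$, then invoke the exact sequence to drop the order by one. The paper simply cites \cite[Lem.~2.6]{BaerStroh} for the symbol computation, whereas you attempt to supply it.

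Your left-factorisation is clean and correct: the Clifford identity $(\Iop{}\mp\mathsf{a}(x,\xi_\varepsilon))\bigl(\mp\norm{\xi_\varepsilon}{}\upbeta+\Cliff{t_2}{\xi_\varepsilon^\sharp}\bigr)=0$ kills branch~$+$ for $Q_{+-}$ and branch~$-$ for $Q_{-+}$ via the \emph{left} projector at $\Sigma_2$.

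The right-factorisation, however, is where you take a detour that does not close. You try to push $\sigma_0(P_{\gtrless 0}(t_1))$ forward through $\mathpzc{P}^{\spinb(M)}$, which forces you to control $\mathsf{v}^{\mathsf{trans}}$. Your appeal to the ultra-static gauge near $\bound M$ does not help: the null geodesic traverses the interior of $M$ where the metric is \emph{not} ultra-static, so $\mathsf{v}^{\mathsf{trans}}$ is genuinely unknown and cannot be read off from the boundary decomposition. The argument as written therefore has a gap on the remaining branch of each off-diagonal entry.

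The fix is simpler than what you attempt and uses only the one transport identity you already have, namely $(\zeta_\varepsilon^\sharp)^{\mathsf{trans}}=\varsigma_\varepsilon^\sharp$. Observe that the leading factor in \clef{Qprincsymbol} is (up to the spacetime/hypersurface Clifford identification) Clifford multiplication by the null vector $\varsigma_\varepsilon^\sharp$. Hence
\[
\bigl(-\varepsilon\norm{\xi_\varepsilon}{}\upbeta+\Cliff{t_2}{\xi_\varepsilon^\sharp}\bigr)\circ\mathpzc{P}
=\mathpzc{P}\circ\bigl(-\varepsilon\norm{\eta}{}\upbeta+\Cliff{t_1}{\eta^\sharp}\bigr),
\]
pulling the null-Clifford factor \emph{backward} through $\mathpzc{P}$ rather than pushing the projector forward. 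Composing with the trailing $\upbeta$ then gives
\[
\sigma_0(Q)\vert_\varepsilon
\;\propto\;\mathpzc{P}\circ\bigl(-\varepsilon\norm{\eta}{}\upbeta+\Cliff{t_1}{\eta^\sharp}\bigr)\upbeta
=-\varepsilon\norm{\eta}{}\,\mathpzc{P}\circ\bigl(\Iop{}+\varepsilon\,\mathsf{a}(y,\eta)\bigr),
\]
which is $-2\varepsilon\norm{\eta}{}\,\mathpzc{P}\circ\sigma_0(P_{<0}(t_1))$ on branch~$+$ and $-2\varepsilon\norm{\eta}{}\,\mathpzc{P}\circ\sigma_0(P_{>0}(t_1))$ on branch~$-$. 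The right-factorisation now follows from $\sigma_0(P_{>0})\sigma_0(P_{<0})=0$, with no reference to $\mathsf{v}^{\mathsf{trans}}$ whatsoever. This is essentially how \cite[Lem.~2.6]{BaerStroh} proceeds.
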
 
\begin{proof}
Let $q_{\pm\mp}(t_2,t_1)$ be the properly supported part of $Q_{\pm\mp}(t_2,t_1)$ from \clef{specentrypropsup}; their principal symbols are the same up to smoothing terms; the same holds for the symbol of the s-regular projectors. The fact, that the resulting canonical relation and thus its nature as graph of a symplectomorphism is preserved under the composition, allows us to compute the principal symbol of the compositions by just composing the principal symbol of each occuring operator (see \cite[Eq.A.1]{OD1}):     
\begin{equation*}
\begin{split}
\mathpzc{q}_{\pm\mp}(x,\xi_{\pm};y,\eta)&:={\sigma}_0(p_{\pm})(x,\xi_{\pm})\circ {\sigma}_0(Q)(x,\xi_{\pm};y,\eta) \circ {\sigma}_0(p_{\mp})(y,\eta)
\end{split}
\end{equation*}
for $(x,\xi_{\pm})\in T^\ast_x \Sigma_2$ and $(y,\eta) \in T^\ast_y\Sigma_2$. 
The calculation and vanishing of the principal symbols $\mathpzc{q}_{\pm\mp}(x,\xi_{\pm};y,\eta)$ can be performed like in the proof of \cite[Lem.2.6]{BaerStroh}. The calculation of the principal symbols with respect to the matrix entries of $\tilde{Q}$ can be performed likewise. Hence, the principal symbols of $Q_{\pm\mp}(t_2,t_1)$ and $\tilde{Q}_{\pm\mp}(t_2,t_1)$ are identically vanishing. The exact sequence property \cite[Lem.A.4]{OD1} then implies that the order of the properly supported part is $(-1)$. 
\end{proof} 

The $\Gamma$-Fredholm property carries over to their matrix entries for (a)APS boundary conditions. 
\begin{theo}\label{propQposnegfred}
$Q_{\pm\pm}$ and $\tilde{Q}_{\pm\pm}$ are $\Gamma$-Fredholm as maps from
\begin{equation}\label{Qfredmappingproperty}
\begin{split}
Q_{++}(t_2,t_1) &: L^2_{[0,\infty)}(\spinb^{+}(\Sigma_1)) \,\rightarrow\, L^2_{(0,\infty)}(\spinb^{+}(\Sigma_2))\\
Q_{--}(t_2,t_1) &: L^2_{(-\infty,0)}(\spinb^{+}(\Sigma_1)) \,\rightarrow\, L^2_{(-\infty,0]}(\spinb^{+}(\Sigma_2))\\
\tilde{Q}_{++}(t_2,t_1) &: L^2_{[0,\infty)}(\spinb^{-}(\Sigma_1)) \,\rightarrow\, L^2_{(0,\infty)}(\spinb^{-}(\Sigma_2))\\
\tilde{Q}_{--}(t_2,t_1) &: L^2_{(-\infty,0)}(\spinb^{-}(\Sigma_1)) \,\rightarrow\, L^2_{(-\infty,0]}(\spinb^{-}(\Sigma_2))\\
\end{split}
\end{equation}
with $\Gamma$-indices 
\begin{equation*}
\begin{split}
&\Index_{\Gamma}(Q_{++}(t_2,t_1))=-\Index_\Gamma(Q_{--}(t_2,t_1))\\
\text{and}\quad\quad &\Index_{\Gamma}(\tilde{Q}_{++}(t_2,t_1))=-\Index_\Gamma(\tilde{Q}_{--}(t_2,t_1))\quad.
\end{split}
\end{equation*}
\end{theo}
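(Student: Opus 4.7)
The plan is to exploit the unitarity relations \clef{Qsys1} and \clef{Qsys2} to construct $\Gamma$-Fredholm parametrices for the diagonal entries out of their adjoints, using the improved order of the off-diagonal pieces from \Cref{propQposnegfredimprov}, and then to deduce the index identity from the kernel isomorphisms in \Cref{kernelisoQ}.

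First, by \Cref{propQposnegfredimprov} the cross entries $Q_{\pm\mp}$ and $\tilde{Q}_{\pm\mp}$ are s-regular $\Gamma$-Fourier integral operators of order $-1$ with canonical relation $\mathsf{C}_{1\to 2}$, a disjoint union of graphs of symplectomorphisms. Hence the compositions $Q_{\pm\mp}^\ast Q_{\pm\mp}$ and $Q_{\pm\mp} Q_{\pm\mp}^\ast$ carry the composed relation $\mathsf{C}_{1\to 2}^{-1}\circ \mathsf{C}_{1\to 2}$, which is the diagonal, making them s-regular $\Gamma$-pseudo-differential operators of order $-2$ on the relevant $L^2$-spaces, and similarly for $\tilde Q$. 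I would then argue that they are $\Gamma$-compact by factoring through the continuous map into $H^2_\Gamma$ provided by \Cref{remarkssmoothing}~(4) and then into $L^2$, which in view of the free-module identification \clef{l2freemoduleII} reduces to a Rellich-type statement on the compact base $M_\Gamma$; alternatively one can iterate the composition until the order drops below $-\dim(\Sigma)$, invoke $\mathscr{S}^1_\Gamma$ via \Cref{propgammapseudoprop}~(2), and use that $\mathscr{K}_\Gamma$ is a norm-closed two-sided ideal to transfer the conclusion back to the first power.

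Restricting \clef{Qsys1} and \clef{Qsys2} to the spectral subspaces from \clef{genL2splitgamma} then yields
\[
Q_{\pm\pm}^\ast Q_{\pm\pm} = \Iop{} - K^{(1)}_{\pm},\qquad Q_{\pm\pm} Q_{\pm\pm}^\ast = \Iop{} - K^{(2)}_{\pm},
\]
with $K^{(i)}_{\pm}\in \mathscr{K}_\Gamma$ and analogously for $\tilde Q$. Thus $Q_{\pm\pm}^\ast$ serves as a two-sided $\Gamma$-Fredholm parametrix for $Q_{\pm\pm}$ with respect to the mapping properties \clef{Qfredmappingproperty}, and invoking \Cref{remsfredprop}, which allows $\mathscr{K}_\Gamma$-remainders in place of $\mathscr{S}^1_\Gamma$, yields $Q_{\pm\pm},\tilde Q_{\pm\pm}\in \mathscr{F}_\Gamma$. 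For the index identity I would apply \Cref{propgammafred}~(2) to write each index as $\dim_\Gamma\kernel{\cdot} - \dim_\Gamma\kernel{\cdot^\ast}$ and use the $\Gamma$-isomorphisms from \Cref{kernelisoQ}, which are unitary by \Cref{prophilbertgammamodules}~(1). By \Cref{propgammadim}~(5) the $\Gamma$-dimensions are preserved, so $\dim_\Gamma\kernel{Q_{--}}=\dim_\Gamma\kernel{Q_{++}^\ast}$ and $\dim_\Gamma\kernel{Q_{++}}=\dim_\Gamma\kernel{Q_{--}^\ast}$, which combined give $\Index_\Gamma(Q_{++})=-\Index_\Gamma(Q_{--})$; the tilde case is identical.

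The main obstacle is the $\Gamma$-compactness step: since the order $-2$ is not below $-\dim(\Sigma)$ for $\dim(\Sigma)\geq 2$, one cannot directly cite \Cref{propgammapseudoprop}~(2) and must rely either on a genuine $\Gamma$-Rellich statement in the spatial $\Gamma$-setting or on the iteration-plus-ideal-closure argument sketched above. A secondary subtlety is that restriction to the spectral subspaces must preserve $\Gamma$-compactness of the remainders; this is automatic because the spectral projections are themselves $\Gamma$-morphisms by \clef{sregprojAtanycut} and $\mathscr{K}_\Gamma$ is a two-sided ideal over $\mathscr{B}_\Gamma$.
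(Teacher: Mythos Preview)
Your proposal is correct and follows essentially the same strategy as the paper: use the unitarity relations \clef{Qsys1}, \clef{Qsys2} to exhibit $Q_{\pm\pm}^\ast$ as a two-sided parametrix with $\Gamma$-compact defects, and derive the index relation from \Cref{kernelisoQ} exactly as you do.

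The one noteworthy difference is in how the $\Gamma$-compactness of the defects is obtained. The paper does not pass through the order $-2$ compositions at all: it shows directly that the off-diagonal entries $Q_{\pm\mp}$ themselves are $\Gamma$-compact, by invoking the identification \clef{commdiagammacomp} and the fact that the descended operators $\underline{Q_{\pm\mp}}$ on the compact base $\Sigma_j/\Gamma$ are compact (this is \cite[Lem.~2.6]{BaerStroh}). The $\Gamma$-compactness of $Q_{\pm\mp}^\ast Q_{\pm\mp}$ and $Q_{\pm\mp} Q_{\pm\mp}^\ast$ then follows from the ideal property, with no need to analyse the composition as a $\Psi$DO or to invoke \Cref{propQposnegfredimprov}. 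Your Rellich-type route is morally the same reduction to the compact base, just applied one step later; your iteration alternative can be made rigorous (a positive element $K$ with $K^N$ in a closed two-sided $C^\ast$-ideal lies in that ideal, since its image in the quotient $C^\ast$-algebra is positive and nilpotent, hence zero), but the phrase ``norm-closed two-sided ideal'' alone does not carry that argument and you should spell it out. The paper in fact records a related parametrix-improvement argument via Neumann series as an alternative proof in the remark following the theorem.
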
 
\begin{proof}
We show that
\begin{equation*}
\begin{split}
Q_{+-}(t_2,t_1)&\in \mathscr{K}_\Gamma(L^2_{(-\infty,0)}(\spinb^{+}(\Sigma_1)),L^2_{}(\spinb^{+}(\Sigma_2)))\\
\text{and} \quad Q_{-+}(t_2,t_1)&\in \mathscr{K}_\Gamma(L^2_{[0,\infty)}(\spinb^{+}(\Sigma_1)),L^2_{}(\spinb^{+}(\Sigma_2)))\quad.
\end{split}
\end{equation*}
These imply that $Q^\ast_{\pm\mp}$ and the compositions $Q^\ast_{\pm\mp}Q_{\pm\mp}$ are equally $\Gamma$-compact such that $\Gamma$-Fredholmness of each $Q_{\pm\pm}$ directly follows from \clef{Qsys1} and \clef{Qsys2}. The $\Gamma$-Fredholm parametrices are then fully given by the adjoints of $Q_{\pm\pm}$.\\ 
\\
$Q_{\pm\mp}$ are unitarily related to $(\Iop{}\otimes \underline{Q_{\pm\mp}})$ according to the following commutative diagram.
\begin{figure}[H]
\centering
\includestandalone[width=0.65\textwidth]{pictures/TIKZpictures/pic9_1}
\caption{Commutative diagram for $Q_{\pm\mp}(t_2,t_1)$.}
\end{figure}
The proof in \cite[Lem.2.6]{BaerStroh} shows that $\underline{Q_{\pm\mp}}(t_2,t_1)$ are compact operators on the compact bases: 
\begin{equation*}
\begin{split}
\underline{Q_{+-}}(t_2,t_1) &\in \mathscr{K}(L^2_{(-\infty,0)}(\spinb^{+}(\Sigma_1/\Gamma)),L^2(\spinb^{+}(\Sigma_2/\Gamma))) \\
\underline{Q_{-+}}(t_2,t_1) & \in \mathscr{K}(L^2_{[0,\infty)}(\spinb^{+}(\Sigma_1/\Gamma)),L^2(\spinb^{+}(\Sigma_2/\Gamma)))\quad.
\end{split}
\end{equation*}
\clef{commdiagammacomp} then implies $\Gamma$-compactness of $Q_{\pm\mp}$. The $\Gamma$-indices then follow from \Cref{kernelisoQ} and \Cref{prophilbertgammamodules} (1) since they imply that there exist unitary $\Gamma$-isomorphisms such that
\begin{equation}\label{qfredstep2}
\dim_\Gamma\kernel{Q_{\pm\pm}}=\dim_\Gamma\kernel{Q^\ast_{\mp\mp}}\quad.
\end{equation}
The index formula in \Cref{propgammafred} (2) completes the proof. The argument carries over to $\tilde{Q}_{\pm\pm}(t_2,t_1)$ with the help of \clef{Qsys1}, \clef{Qsys2} where $Q$ is replaced by $\tilde{Q}$. 
\end{proof}
\begin{rem}
An alternative proof has been published in \cite{OD}. We briefly want to sketch the idea. Each first two equations in \clef{Qsys1} and \clef{Qsys2} show the important observation that $Q^\ast_{++}$ and $Q^\ast_{--}$ can be used as initial parametrices of $Q_{++}$ respectively $Q_{--}$ with remainders of the form $Q^\ast_{\mp\pm}\circ Q_{\mp\pm}$. Ellipticity would usually become an important property in constructing an initial parametrix, but becomes irrelevant here as the unitarity of $Q$ replaces this step. Based on \Cref{propQposneg} (1), a precise analysis of the compositions $Q^\ast_{\pm\mp}\circ Q_{\pm\mp}$ shows that these are $\Gamma$-invariant properly supported pseudo-differential operators of order $(-2)$ with a $\Gamma$-trace class remainder. We can construct an even better parametrix for each operator with a Neumann series argument. The order of the errors then become sufficiently negative. \\
\\
A careful analysis of the regularity and operator character of the constructed parametrices shows that they are properly supported $\Gamma$-Fourier integral operator of order $(-2N)$ with canonical relation $(\mathsf{C}_{1\rightarrow 2})^{-1}$ modulo $\Gamma$-trace class operator. Applying them on $Q_{\pm\pm}$ gives the identity plus a $\Gamma$-trace class remainder and a remainder $R\in \ydo{-2N}{\Gamma,\mathsf{prop}}(\spinb^{+}(\Sigma_{1}))$. If we choose $N > \dim(\Sigma)/2$ and recall $L^2$-boundness of the matrix entries, then \Cref{propgammapseudoprop} (2) already characterises $R$ as $\Gamma$-trace class operator such that the constructed parametrices are in fact suitable in showing $\Gamma$-Fredholmness. 
\end{rem}

\section{$\Gamma$-Fredholmness of the Lorentzian Dirac operator}\label{chap:Atiyah}

We prove Fredholmness of $D^{}_{\pm}$ by relating to the known $\Gamma$-Fredholmness of the diagonal spectral entries of $Q$ and $\tilde{Q}$. 

\subsection{An important Lemma}\label{chap:fredholm-sec:lemma}

We first have to prove a Hilbert $\Gamma$-module version of \cite[Lem.A.1]{BaerBall}.

\begin{lem}\label{funcanagammalemma}
Let $\mathscr{H},\mathscr{H}_1,\mathscr{H}_2$ be (projective) Hilbert $\Gamma$-modules, $A \in \mathscr{B}_\Gamma(\mathscr{H},\mathscr{H}_1)$, and $B \in \mathscr{B}_\Gamma(\mathscr{H},\mathscr{H}_2)$ which is onto; define $C=A\vert_{\kernel{B}}\oplus \Iop{\mathscr{H}_2}$, then
\begin{itemize}
\item[(1)] $\dim_\Gamma\kernel{C}=\dim_\Gamma\kernel{A\oplus B}\,\,;$
\item[(2)] $\range{C}$ is closed if and only if $\range{A\oplus B}$ is closed and 
\begin{equation*}
\codim_{\Gamma}(\range{C})=\codim_{\Gamma}(\range{A\oplus B})\quad;
\end{equation*} 
\item[(3)] $C$ is $\Gamma$-Fredholm if and only if $A\oplus B$ is $\Gamma$-Fredholm and $\Index_{\Gamma}(C)=\Index_{\Gamma}(A\oplus B)$.
\end{itemize}
\end{lem}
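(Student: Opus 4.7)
My plan is to reduce the lemma to the observation that $A\oplus B$ and $C$ are related by topological $\Gamma$-isomorphisms of source and target, namely $A\oplus B = S\circ C\circ \Phi$ for suitable $\Phi$ and $S$. Once such a factorization is in place, the three assertions all follow automatically, since $\Gamma$-dimension of the kernel, closedness and $\Gamma$-codimension of the range, as well as $\Gamma$-Fredholmness and the $\Gamma$-index, are all preserved under pre- and post-composition with topological $\Gamma$-isomorphisms.

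To construct $\Phi$ and $S$, I would first exploit the surjectivity of $B$ to split $\mathscr{H} = \kernel B \oplus (\kernel B)^\perp$. By \Cref{helpinglemma2} the kernel is a projective Hilbert $\Gamma$-submodule, and by \Cref{helpinglemma1} so is its orthogonal complement. The restricted $\Gamma$-morphism $B\vert_{(\kernel B)^\perp} : (\kernel B)^\perp \to \mathscr{H}_2$ is a bounded bijection, hence a topological isomorphism by the open mapping theorem; its bounded inverse $s : \mathscr{H}_2 \to (\kernel B)^\perp$ inherits $\Gamma$-equivariance from $B$ (uniqueness of the inverse forces it to intertwine the $\Gamma$-action). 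Then
\[\Phi(x) := (x - s(Bx),\ Bx), \qquad S(z, y) := (z + As(y),\ y)\]
define topological $\Gamma$-isomorphisms $\Phi : \mathscr{H} \to \kernel B \oplus \mathscr{H}_2$ and $S : \mathscr{H}_1 \oplus \mathscr{H}_2 \to \mathscr{H}_1 \oplus \mathscr{H}_2$, with inverses $\Phi^{-1}(k,y) = k + s(y)$ and $S^{-1}(z,y) = (z - As(y),\ y)$. A short direct computation yields $A\oplus B = S\circ C\circ \Phi$ as $\Gamma$-morphisms.

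Given this factorization, (1) follows from $\kernel(A\oplus B) = \Phi^{-1}(\kernel C)$ combined with \Cref{prophilbertgammamodules}\,(1) and \Cref{propgammadim}\,(5), which guarantee that topological $\Gamma$-isomorphic projective Hilbert $\Gamma$-modules have identical $\Gamma$-dimension. For (2), the identity $\range(A\oplus B) = S(\range C)$ together with the topological $\Gamma$-isomorphism $S$ transports closedness and induces a unitary $\Gamma$-isomorphism of the cokernels via \Cref{helpinglemma1}, giving equality of $\Gamma$-codimensions. Assertion (3) then drops out by combining (1) and (2) with the characterization of $\Gamma$-Fredholmness in \Cref{propgammafred}\,(3) and the identity $\Index_\Gamma = \dim_\Gamma\kernel - \codim_\Gamma\range$ valid for closed-range operators, which follows from \Cref{propgammafred}\,(2) via $\dim_\Gamma \kernel(A\oplus B)^\ast = \codim_\Gamma \range(A\oplus B)$.

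The only real subtlety I anticipate is checking the $\Gamma$-equivariance of the section $s$ and therefore of $\Phi$ and $S$; everything else is bookkeeping of Hilbert $\Gamma$-module invariants under topological $\Gamma$-isomorphisms. This is precisely the role played by the hypothesis that $B$ be a $\Gamma$-morphism rather than merely continuous, so that $\kernel B$ (and hence $(\kernel B)^\perp$) is $\Gamma$-invariant in the first place.
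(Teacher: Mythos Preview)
Your proposal is correct and takes essentially the same approach as the paper: your factorization $A\oplus B = S\circ C\circ \Phi$ is precisely the inverse of the paper's $C=\mathcal{J}\circ(A\oplus B)\circ\mathcal{I}$, with $\Phi^{-1}=\mathcal{I}$ and $S^{-1}=\mathcal{J}$ literally the same maps. The only cosmetic difference is that for (2) you argue directly via $\range(A\oplus B)=S(\range C)$ and the induced cokernel isomorphism, whereas the paper passes through adjoints and $\kernel{C^\ast}\cong\kernel{(A\oplus B)^\ast}$; both routes are fine.
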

\begin{proof}
Since $A$ and $B$ are bounded, also $A\vert_{\kernel{B}}$, $C$ and $A\oplus B$ are bounded 
and their $\Gamma$-invariance follows trivially in each summand. 
One equally concludes from the proof in \cite{BaerBall} that there exist two $\Gamma$-isomorphisms $\mathcal{I}\in \mathscr{B}_\Gamma(\mathscr{H}^{\oplus 2})$ and $\mathcal{J}\in \mathscr{B}_\Gamma(\mathscr{H}_1\oplus \mathscr{H}_2)$ for the splitting $\mathscr{H}=\kernel{B}\oplus\kernel{B}^{\perp}$ such that
\begin{equation}\label{operatorisomorph}
C=\mathcal{J}\circ (A\oplus B)\circ \mathcal{I}=\mathcal{J}\circ
\left(
\begin{matrix}
A\vert_{\kernel{B}} & A\vert_{\kernel{B}^\perp}\\
\zerop{} & B\vert_{\kernel{B}^\perp}
\end{matrix}
\right)\circ \mathcal{I} \quad .
\end{equation}
These are given by 
\begin{equation*}
\mathcal{I}=\Iop{\kernel{B}}\oplus \left(B\vert_{\kernel{B}^\perp}\right)^{-1} \quad \text{and} \quad \mathcal{J}=\left(
\begin{matrix}
\Iop{\mathscr{H}_1} & -A\vert_{\kernel{B}^\perp}\circ \left(B\vert_{\kernel{B}^\perp}\right)^{-1} \\
\zerop{} & \Iop{\mathscr{H}_2}
\end{matrix}
\right) \quad.
\end{equation*}
\begin{itemize}
\item[(1)] The null-spaces of the left- and right-hand side of \clef{operatorisomorph} are projective Hilbert $\Gamma$-modules. \Cref{propgammadim} (4) implies
\begin{equation*}
\dim_\Gamma\kernel{A\vert_{\kernel{B}}}=\dim_\Gamma\kernel{C}=\dim_\Gamma\kernel{\mathcal{J}\circ (A\oplus B)\circ \mathcal{I}} \quad .
\end{equation*}
Because of the equivalence $\kernel{\mathcal{J}\circ (A\oplus B)\circ \mathcal{I}}=\mathcal{I}^{-1}(\kernel{A\oplus B})$ with the $\Gamma$-isomorphism $\mathcal{I}^{-1}$, the map between the projective Hilbert $\Gamma$-modules $\kernel{A\oplus B}$ and $\kernel{\mathcal{J}\circ(A\oplus B)\circ \mathcal{I}}$
%
becomes a $\Gamma$-isomorphism and \Cref{prophilbertgammamodules} (1) then guarantees the existence of a unitary $\Gamma$-isomorphism
$$\kernel{A\oplus B}\cong\kernel{\mathcal{J}\circ(A\oplus B)\circ \mathcal{I}}\quad.$$
The invariance of the $\Gamma$-dimension between unitary isomorphic spaces (\Cref{propgammadim} (5)) shows the claim.
\item[(2)] Since closedness does not involve any property with respect to the group $\Gamma$, the first part of the claim follows as in \cite[Lem.A.1]{BaerBall} from the closed range theorem.
\Cref{helpinglemma1} and closedness imply that 
\begin{equation*}
\quotspace{\left(\mathscr{H}_1\oplus\mathscr{H}_2\right)}{\range{C}}\quad \text{and} \quad \quotspace{\left(\mathscr{H}_1\oplus\mathscr{H}_2\right)}{\range{A\oplus B}}
\end{equation*}  
are projective Hilbert $\Gamma$-submodules. A unitary $\Gamma$-isomorphism between these two spaces needs to be found. Since both quotient spaces are again Hilbert spaces, they are isomorphic to their dual spaces by the Frechet-Riesz theorem. The closed range theorem implies again that the orthogonal complements of the ranges of $C$ and $A\oplus B$, both canonically isomorphic to their quotients in $\mathscr{H}_1\oplus\mathscr{H}_2$, are equal to the null spaces of their adjoint operators $\kernel{C^\ast}$ respectively $\kernel{(A\oplus B)^\ast}$. \\
\\
It is now left to show that the kernels are isomorphic to each other. 
Adjoining \clef{operatorisomorph} yields
\begin{equation*}
C^\ast=\mathcal{I}^\ast\circ (A\oplus B)^\ast \circ \mathcal{J}^\ast
\end{equation*}  
A similar argument as in (1) shows that the kernels of $C^\ast$ and $(A\oplus B)^\ast$ are indeed (unitarily) isomorphic to each other. Thus, the composition with the other isomorphisms, used to reduce the quotients, implies again a topological and thus a unitary $\Gamma$-isomorphism between projective $\Gamma$-submodules which finally leads to the claim and

\begin{equation*}
\begin{split}
\codim_\Gamma(\range{C})&=\dim_\Gamma\range{C}^\perp=\dim_\Gamma\kernel{C^\ast}=\dim_\Gamma\kernel{(A\oplus B)^\ast}\\
&=\dim_\Gamma\range{A\oplus B}^\perp=\codim_\Gamma(\range{A\oplus B})\quad .
\end{split}
\end{equation*}
\item[(3)] $\Gamma$-Fredholmness of $C$ implicates $\dim_\Gamma\kernel{C}<\infty$, $\range{C}$ is closed, and that the $\Gamma$-codimension satisfies $\codim_\Gamma(\range{C})<\infty$. (1) and (2) imply $\dim_\Gamma\kernel{A\oplus B}<\infty$, $\range{A\oplus B}$ is closed and $\codim_\Gamma(\range{A\oplus B})<\infty$ and thus $\Gamma$-Fredholmness of $A\oplus B$. We can on the other hand conclude the $\Gamma$-Fredholmness of $C$ from the assumed $\Gamma$-Fredholmness of $A\oplus B$ as the above arguments are symmetric in terms of both operators. The equivalence of the $\Gamma$-indices follows from the equivalence of the $\Gamma$-dimensions in this proof and (7) of \Cref{propgammafred}:
\begin{align*}
\Index_\Gamma(C)&=\dim_\Gamma\kernel{C}-\dim_\Gamma\kernel{C^\ast}\\
&=\dim_\Gamma\kernel{A\oplus B}-\dim_\Gamma\kernel{(A\oplus B)^\ast}=\Index_\Gamma(A\oplus B)\quad.\qedhere
\end{align*}
\end{itemize}
\end{proof}  

\subsection{$\Gamma$-Fredholmness of $D_{\pm}$ under (a)APS boundary conditions}\label{chap:fredholm-sec:gammafredDplus}

We are now in the position to prove Fredholmness of $D_{\pm}^{}$. We focus on $D^{}=D^{}_{+}$ first and give a detailed proof. Afterwards, we consider $\tilde{D}^{}=D^{}_{-}$.\\
\\ 
We first define finite energy spinors of the Dirac equation which satisfy either APS or aAPS boundary conditions for positive chirality in the $\Gamma$-setting: based on \clef{upgammafesections} (b), we define
\begin{multline*}
FE^s_{\Gamma,\mathrm{APS}}(M,[t_1,t_2],D^{})\\
:=\SET{u \in FE^s_{\Gamma}(M,[t_1,t_2],D^{})\,\Big\vert\,P^{}_{\intervallro{0}{\infty}{}}(t_1)\circ\rest{\Sigma_1}u=0=P^{}_{\intervalllo{-\infty}{0}{}}(t_2)\circ\rest{\Sigma_2}u}
\end{multline*}
\begin{multline*}
FE^s_{\Gamma,\mathrm{aAPS}}(M,[t_1,t_2],D^{})\\:=\SET{u \in FE^s_{\Gamma}(M,[t_1,t_2],D^{})\,\Big\vert\,P^{}_{\intervallo{-\infty}{0}{}}(t_1)\circ\rest{\Sigma_1}u=0=P^{}_{\intervallo{0}{\infty}{}}(t_2)\circ\rest{\Sigma_2}u} \,.
\end{multline*}
All these spaces are closed subspaces of $FE^s_{\Gamma}(M,[t_1,t_2],D^{})$. Since $FE^s_{\Gamma}(M,[t_1,t_2],D^{})$ is a free Hilbert $\Gamma$-module due to \Cref{inivpwellgammatempcomp}, its left action representation transfers to $FE^s_{\Gamma,\mathrm{(a)APS}}(M,[t_1,t_2],D^{})$; moreover, it is $\Gamma$-invariant as it is defined with spectral projectors and restrictions as $\Gamma$-invariant operators. \Cref{prophilbertgammamodules} (2) then implies that they are projective Hilbert $\Gamma$-submodules on their own right. The choice $s=0$ is of particular interest as the related spaces then appear to be the correct domains for $\Gamma$-Fredholmness. We thus define the \textit{Dirac operator under APS boundary conditions} to be
\begin{equation*}
D^{}_{\mathrm{APS}}\,:\,FE^0_{\Gamma,\mathrm{APS}}(M,[t_1,t_2],D^{})\,\rightarrow L^2(\spinb^{-}_{}(M))
\end{equation*}
and the \textit{Dirac operator under aAPS boundary conditions} to be
\begin{equation*}
D^{}_{\mathrm{aAPS}}\,:\,FE^0_{\Gamma,\mathrm{aAPS}}(M,[t_1,t_2],D^{})\,\rightarrow L^2(\spinb^{-}_{}(M)) \quad.
\end{equation*}
\newpage
\begin{theo}\label{indexDgaAPStwist}
Let $M$ be a temporal compact, even-dimensional globally hyperbolic spatial $\Gamma$-manifold with time domain $[t_1,t_2]$, $\spinb^{+}(M)\rightarrow M$ the $\Gamma$-spin bundle of positive chirality; the $\Gamma$-invariant Dirac operators $D_{\mathrm{APS}}$ and $D_{\mathrm{aAPS}}$ as lifts of Dirac operators on the base manifold are $\Gamma$-Fredholm 
with $\Gamma-$indices
\begin{equation*}
\Index_{\Gamma}(D_{\mathrm{APS}})= \Index_\Gamma\left(Q_{--}(t_2,t_1)\right) \quad\text{and}\quad
\Index_{\Gamma}(D_{\mathrm{aAPS}})=\Index_\Gamma\left(Q_{++}(t_2,t_1)\right) \quad.
\end{equation*}
\end{theo}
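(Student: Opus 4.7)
The plan is to reduce the $\Gamma$-Fredholmness of $D_{\mathrm{APS}}$ to the $\Gamma$-Fredholmness of the spectral entry $Q_{--}(t_2,t_1)$ (respectively $Q_{++}(t_2,t_1)$ for aAPS) already established in \Cref{propQposnegfred}, using the well-posedness isomorphism from \Cref{inivpwellgammatempcomp} together with the reduction principle of \Cref{funcanagammalemma}.

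I would introduce the bounded $\Gamma$-morphism
\begin{equation*}
B : FE^0_\Gamma(M,[t_1,t_2],D) \,\to\, L^2_{[0,\infty)}(\spinb^{+}(\Sigma_1)) \oplus L^2_{(-\infty,0]}(\spinb^{+}(\Sigma_2)),
\end{equation*}
given by $B(u) := \bigl(P_{[0,\infty)}(t_1)\rest{t_1}u,\, P_{(-\infty,0]}(t_2)\rest{t_2}u\bigr)$, whose kernel is exactly $FE^0_{\Gamma,\mathrm{APS}}(M,[t_1,t_2],D)$, so that $D|_{\kernel{B}} = D_{\mathrm{APS}}$. Surjectivity of $B$ would be established by writing the target data $(v_1,v_2)$ as the boundary projection of $u_a+u_c$, where $u_a$ is the homogeneous solution with $\rest{t_1}u_a=v_1$ from \Cref{homivpwellgammatempcomp} and $u_c = \chi\tilde u$ is built from a smooth temporal cutoff $\chi$ vanishing near $t_1$ and a homogeneous solution $\tilde u$ with prescribed value at $\Sigma_2$, yielding $\rest{t_1}u_c=0$ and $\rest{t_2}u_c = v_2 - P_{(-\infty,0]}(t_2)Q(t_2,t_1)v_1$. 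Once $B$ is surjective, \Cref{funcanagammalemma} identifies the $\Gamma$-Fredholmness of $D_{\mathrm{APS}}$ with that of the augmented operator $D\oplus B$ and preserves the $\Gamma$-index.

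Using the $\Gamma$-isomorphism $\rest{t_1}\oplus D$ from \Cref{inivpwellgammatempcomp} together with the spectral splitting $L^2(\spinb^{+}(\Sigma_1))=L^2_{[0,\infty)}\oplus L^2_{(-\infty,0)}$ of $A_{t_1}$, the operator $D\oplus B$ is equivalent to the block map
\begin{equation*}
\begin{pmatrix} 0 & 0 & \Iop{} \\ \Iop{} & 0 & 0 \\ Q_{-+}(t_2,t_1) & Q_{--}(t_2,t_1) & \tilde{\Phi} \end{pmatrix},
\end{equation*}
where $\tilde{\Phi}(f) := P_{(-\infty,0]}(t_2)\rest{t_2} u^{\mathrm{inh}}_f$ for the finite-energy solution of $D u^{\mathrm{inh}}_f = f$ with $\rest{t_1} u^{\mathrm{inh}}_f = 0$. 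Left-composition with the $\Gamma$-invertible block-triangular operator that subtracts $Q_{-+}$ times row $2$ and $\tilde\Phi$ times row $1$ from row $3$ reduces this to a block-diagonal operator whose only non-trivial block is $Q_{--}(t_2,t_1) : L^2_{(-\infty,0)}(\spinb^{+}(\Sigma_1)) \to L^2_{(-\infty,0]}(\spinb^{+}(\Sigma_2))$. By \Cref{propQposnegfred} this block is $\Gamma$-Fredholm, hence $D_{\mathrm{APS}}$ is $\Gamma$-Fredholm with $\Index_\Gamma(D_{\mathrm{APS}}) = \Index_\Gamma(Q_{--}(t_2,t_1))$.

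The aAPS case is handled identically with $B$ replaced by $(P_{(-\infty,0)}(t_1)\rest{t_1})\oplus(P_{(0,\infty)}(t_2)\rest{t_2})$; the analogous block reduction then isolates $Q_{++}(t_2,t_1)$, giving $\Index_\Gamma(D_{\mathrm{aAPS}}) = \Index_\Gamma(Q_{++}(t_2,t_1))$. The main technical difficulty I anticipate lies in establishing the surjectivity of $B$: since no single homogeneous solution can independently realise both prescribed boundary projections, the corrector $u_c$ must be inhomogeneous, and one has to check that the cutoff construction indeed lands in $FE^0_\Gamma(M,[t_1,t_2],D)$ (in particular that $Du_c = \upbeta(\partial_t\chi)\tilde u$ lies in $L^2(\spinb^{-}(M))$) and that its boundary projections satisfy the required identities.
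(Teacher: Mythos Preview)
Your approach is correct and essentially equivalent to the paper's, but with a different packaging of the same reduction. The paper applies \Cref{funcanagammalemma} \emph{twice}: first with $A=\mathbb{P}_{+}$ and $B=D$ (using that $D$ is onto by \Cref{inivpwellgammatempcomp}) to reduce $\mathbb{P}_{+}\oplus D$ to $\mathbb{P}_{+}\vert_{\kernel{D}}$, whose kernel and cokernel are computed directly in terms of $Q_{--}$ via the parametrisation of homogeneous solutions by initial data; then a second time with the roles swapped ($A=D$, $B=\mathbb{P}_{+}$) to pass from $D\oplus\mathbb{P}_{+}$ to $D_{\mathrm{APS}}=D\vert_{\kernel{\mathbb{P}_{+}}}$. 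You instead apply the lemma once (with $B$ the boundary operator) and handle the resulting $D\oplus B$ by an explicit block-triangular reduction after composing with the well-posedness isomorphism $\rest{t_1}\oplus D$. Both routes isolate the same $Q_{--}$ block; the paper's double application avoids writing out the inhomogeneous contribution $\tilde\Phi$ and the row operations, at the cost of invoking the lemma twice.

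One point worth noting: the paper asserts that ``$\mathbb{P}_{\pm}$ are onto'' at the end of its proof without argument, whereas you correctly identify this surjectivity as the main technical step and sketch the cutoff-corrector construction. Your construction is sound: since $\tilde u\in FE^0_\Gamma(M,\kernel{D})\subset C^0([t_1,t_2],L^2)$ and $\chi$ depends only on $t$ with $\partial_t\chi$ compactly supported in $(t_1,t_2)$, the product $u_c=\chi\tilde u$ lies in $C^0([t_1,t_2],L^2)$ and $Du_c=[D,\chi]\tilde u$ is a zeroth-order multiplication by a bounded, temporally compactly supported function acting on an $L^2$-section, hence lands in $L^2(\spinb^{-}(M))$. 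So $u_c\in FE^0_\Gamma(M,[t_1,t_2],D)$ as required.
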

\begin{proof}
We denote with 
\begin{equation}\label{boundcondopgaps}
\mathbb{P}_{+}:=\left(P_{\intervallro{0}{\infty}{}}(t_1)\circ\rest{\Sigma_1}\right)\oplus \left(P_{\intervalllo{-\infty}{0}{}}(t_2)\circ\rest{\Sigma_{2}}\right) 
\end{equation}
the boundary condition operator for APS boundary conditions and with
\begin{equation}\label{boundcondopgaaps}
\mathbb{P}_{-}:=\left(P_{\intervallo{-\infty}{0}{}}(t_1)\circ\rest{\Sigma_1}\right)\oplus \left(P_{\intervallo{0}{\infty}{}}(t_2)\circ\rest{\Sigma_{2}}\right)
\end{equation}
the operator for aAPS boundary conditions. The main task is to show that 
\begin{multline*}
\mathbb{P}_{+}\oplus D :\\
FE^0_{\Gamma,\mathrm{APS}}(M,[t_1,t_2],D)\,\rightarrow \,\left[L^2_{\intervallro{0}{\infty}{}}(\spinb^{+}(\Sigma_1))\oplus L^2_{\intervalllo{-\infty}{0}{}}(\spinb^{+}(\Sigma_2))\right]\oplus L^2(\spinb^{-}(M))
\end{multline*}
and
\begin{multline*}
\mathbb{P}_{-}\oplus D: \\
FE^0_{\Gamma,\mathrm{aAPS}}(M,[t_1,t_2],D)\,\rightarrow \,\left[L^2_{\intervallo{-\infty}{0}{}}(\spinb^{+}(\Sigma_1))\oplus L^2_{\intervallo{0}{\infty}{}}(\spinb^{+}(\Sigma_2))\right]\oplus L^2(\spinb^{-}(M))
\end{multline*}
are $\Gamma$-Fredholm with the claimed $\Gamma$-indices. We want to apply \Cref{funcanagammalemma} with

\begin{align}
\mathscr{H}&=FE^0_{\Gamma}(M,[t_1,t_2],D) &\mathscr{H}_1&=L^2_{\intervallro{0}{\infty}{}}(\spinb^{+}(\Sigma_1))\oplus L^2_{\intervalllo{-\infty}{0}{}}(\spinb^{+}(\Sigma_2)) &\mathscr{H}_2&=L^2(\spinb^{-}(M))\nonumber \\
B&=D=D_{+} &  A&=\mathbb{P}_{+} \label{dataforlem1} 
\end{align}
to prove $\Gamma$-Fredholmness of $\mathbb{P}_{+}\oplus D$ by checking that $C=\mathbb{P}_{+}\vert_{\kernel{D}}\oplus \Iop{\mathscr{H}_2}$ is $\Gamma$-Fredholm. The use is legitimated, because $\mathscr{H}_1$ and $\mathscr{H}_2$ are Hilbert $\Gamma$-modules and $\mathscr{H}$ is a Hilbert $\Gamma$-module by \Cref{inivpwellgammatempcomp} and \Cref{prophilbertgammamodules} (2); boundness in the spririt of $\Gamma$-morphisms and surjectivity of $D_{+}$ follow from \Cref{inivpwellgammatempcomp}; the same counts for the restrictions to the hypersurfaces. The s-regularity of the spectral projections then imply that $\mathbb{P}_{+}$ is equally a $\Gamma$-morphism. We furthermore use the shorthand notations $\mathscr{H}_1(t_1)=L^2_{[0,\infty)}(\spinb^{+}(\Sigma_1))$ and $\mathscr{H}_1(t_2):=L^2_{(-\infty,0]}(\spinb^{+}(\Sigma_2))$ which imply 
\begin{align*}
&  &\mathscr{H}_1=\mathscr{H}_1(t_1)\oplus\mathscr{H}_1(t_2) & \\
\mathscr{H}^\perp_1(t_1)&=L^2_{(-\infty,0)}(\spinb^{+}(\Sigma_1)) & &\mathscr{H}^\perp_1(t_2)=L^2_{(0,\infty)}(\spinb^{+}(\Sigma_1)) \quad.
\end{align*}
The algebraic manipulations and arguments from \cite[Theorem 3.2.]{BaerStroh} can be applied here as well where one needs to express everything in terms of projective/free Hilbert $\Gamma$-modules, $\Gamma$-isomorphisms, $\Gamma$-dimensions, $\Gamma$-morphisms and $\Gamma$-compact operators:
\vspace*{1em}
\begin{itemize}
\item $\kernel{C}=\kernel{Q_{--}(t_2,t_1)}\oplus \SET{0_{\mathscr{H}_2}}$ has finite $\Gamma$-dimension;
\item $\range{C}$ is closed and has the form
\begin{equation*}
\Big\lbrace(u_1,Q_{-+}(t_2,t_1)u_1+Q_{--}(t_2,t_1)v) \in \mathscr{H}_1 \,\Big\vert\, v \in  \mathscr{H}_1^\perp(t_1)\Big\rbrace\oplus\mathscr{H}_2\,.
\end{equation*}
\item $\quotspace{\mathscr{H}_1\oplus\mathscr{H}_2}{\range{C}}\cong\range{C}^\perp$ with
\begin{equation*}
\range{C}^\perp=\left[-(Q_{-+}(t_2,t_1))^\ast\otimes\Iop{}\right]\left(\kernel{(Q_{--}(t_1,t_2))^\ast}\oplus\SET{0_{\mathscr{H}_2}}\right)\,.
\end{equation*}
\end{itemize}
\vspace*{0.5em}
\Cref{kernelisoQ} implies that $(Q_{-+}(t_2,t_1))^\ast$ is a $\Gamma$-isomorphism on $\kernel{(Q_{--}(t_1,t_2))^\ast}$ such that
$$ \range{C}^\perp \cong \kernel{(Q_{--}(t_1,t_2))^\ast}\oplus\SET{0_{\mathscr{H}_2}} $$
and the range of $C$ has finite $\Gamma$-codimension due to $\Gamma$-Fredholmness of $Q_{--}(t_2,t_1)$. We conclude that $C=\mathbb{P}_{+}\oplus D$ is $\Gamma$-Fredholmn with $\Gamma$-index
\begin{align*}
\Index_\Gamma(\mathbb{P}_{+}\oplus D)&=\dim_\Gamma\kernel{Q_{--}(t_2,t_1)} -\dim_\Gamma\kernel{(Q_{--}(t_2,t_1))^\ast}\\
&= \Index_\Gamma(Q_{--}(t_2,t_1)).
\end{align*}
The second statement follows in the same way by choosing
\begin{equation*}
\mathscr{H}_1=L^2_{\intervallo{-\infty}{0}{}}(\spinb^{+}(\Sigma_1))\oplus L^2_{\intervallo{0}{\infty}{}}(\spinb^{+}(\Sigma_2))\quad\text{and}\quad A=\mathbb{P}_{-}
\end{equation*}
in \clef{dataforlem1} with the same legimitations. We replace the shorthand notations from the upper case to $\mathscr{H}_1(t_1)=L^2_{(-\infty,0)}(\spinb^{+}(\Sigma_1))$ and $\mathscr{H}_1(t_2)=L^2_{(0,\infty)}(\spinb^{+}(\Sigma_2))$ such that $\mathscr{H}_1=\mathscr{H}_1(t_1)\oplus\mathscr{H}_1(t_2)$ and $\mathscr{H}^\perp_1(t_1)=L^2_{[0,\infty)}(\spinb^{+}(\Sigma_1))$. It turns out that the $\Gamma$-Fredholmness can be related to the $\Gamma$-Fredholm-property of $Q_{++}(t_2,t_1)$ with
\begin{eqnarray*}
\Index_\Gamma(\mathbb{P}_{-}\oplus D)&=& \Index_\Gamma(Q_{++}(t_2,t_1)) \quad.
\end{eqnarray*}
$\Gamma$-Fredholmness of $\mathbb{P}_{\pm}\oplus D$ implies $\Gamma$-Fredholmness of $D\oplus \mathbb{P}_{\pm}$ as this property does not depend on the order of the direct sum. The Dirac operators of interest are related to $\mathbb{P}_{\pm}$ via
\begin{equation}
D_{\mathrm{APS}}=D\vert_{\kernel{\mathbb{P}_{+}}}\quad \text{and} \quad D_{\mathrm{aAPS}}=D\vert_{\kernel{\mathbb{P}_{-}}}\quad.
\end{equation}
The main claim finally follows by applying \Cref{funcanagammalemma} with 
\begin{align*}
\mathscr{H}&=FE^0_{\Gamma}(M,[t_1,t_2],D)& \mathscr{H}_1&=L^2(\spinb^{-}(M)) & \mathscr{H}_2&=L^2_{\intervallo{-\infty}{0}{}}(\spinb^{+}(\Sigma_1))\oplus L^2_{\intervallo{0}{\infty}{}}(\spinb^{+}(\Sigma_2)) \\
A&=D=D_{+} & \quad B&=\mathbb{P}_{+} 
\end{align*}
and $D_{\mathrm{APS}}$ becomes $\Gamma$-Fredholm with index $\Index_{\Gamma}(Q_{--}(t_2,t_1))$; applying \Cref{funcanagammalemma} with 
\begin{align*}
\mathscr{H}&=FE^0_{\Gamma}(M,[t_1,t_2],D) &  \mathscr{H}_1&=L^2(\spinb^{-}(M)) & \mathscr{H}_2&=L^2_{\intervallo{-\infty}{0}{}}(\spinb^{+}(\Sigma_1))\oplus L^2_{\intervallo{0}{\infty}{}}(\spinb^{+}(\Sigma_2)) \\
A&=D=D_{+} & \quad B&=\mathbb{P}_{-} 
\end{align*}
proves on the other hand the $\Gamma$-Fredholmness of $D_{\mathrm{aAPS}}$. The applications are justified because $D$ and $\mathbb{P}_{\pm}$ are $\Gamma$-morphism and $\mathbb{P}_{\pm}$ are onto.
\end{proof}
We can prove in a similar fashion $\Gamma$-Fredholmness for $\tilde{D}^{}:=D^{}_{-}$. We define similarly
\begin{multline*}
FE^s_{\Gamma,\mathrm{APS}}(M,[t_1,t_2],\tilde{D}^{})\\
:=\SET{u \in FE^s_{\Gamma}(M,[t_1,t_2],\tilde{D}^{})\,\Big\vert\,P^{}_{\intervallo{-\infty}{0}{}}(t_1)\circ\rest{\Sigma_1}u=0=P^{}_{\intervallo{0}{\infty}{}}(t_2)\circ\rest{\Sigma_2}u}
\end{multline*}
\begin{multline*}
FE^s_{\Gamma,\mathrm{aAPS}}(M,[t_1,t_2],\tilde{D}^{})\\:=\SET{u \in FE^s_{\Gamma}(M,[t_1,t_2],\tilde{D}^{})\,\Big\vert\,P^{}_{\intervallro{0}{\infty}{}}(t_1)\circ\rest{\Sigma_2}u=0=P^{}_{\intervalllo{-\infty}{0}{}}(t_2)\circ\rest{\Sigma_1}u} \,.
\end{multline*}
These spaces are all Hilbert $\Gamma$-modules for the same reasons as for $D^{}$. Restricting $\tilde{D}^{}$ to these domains for $s=0$ defines the Dirac operators for (a)APS-boundary conditions with respect to negative chirality:
\begin{eqnarray*}
\tilde{D}^{}_{\mathrm{APS}}\,:\,FE^0_{\Gamma,\mathrm{APS}}(M,[t_1,t_2],\tilde{D}^{})\,\rightarrow L^2(\spinb^{+}_{}(M))\\
\tilde{D}^{}_{\mathrm{aAPS}}\,:\,FE^0_{\Gamma,\mathrm{aAPS}}(M,[t_1,t_2],\tilde{D}^{})\,\rightarrow L^2(\spinb^{+}_{}(M)) \quad.
\end{eqnarray*}
$\Gamma$-Fredholmness follows like in the proof of \Cref{indexDgaAPStwist}.
\begin{theo}\label{indexDneggaAPS}
Let $M$ be a temporal compact, even-dimensional globally hyperbolic spatial $\Gamma$-manifold with time domain $[t_1,t_2]$, $\spinb^{-}(M)\rightarrow M$ the $\Gamma$-spin bundle of negative chirality; the $\Gamma$-invariant Dirac operators $\tilde{D}_{\mathrm{APS}}$ and $\tilde{D}_{\mathrm{aAPS}}$ as lifts of Dirac operators on the base manifold are $\Gamma$-Fredholm  with $\Gamma$-indices
\begin{equation*}
\Index_{\Gamma}(\tilde{D}_{\mathrm{APS}})= \Index_\Gamma\left(\tilde{Q}_{++}(t_2,t_1)\right) \quad\text{and}\quad 
\Index_{\Gamma}(\tilde{D}_{\mathrm{aAPS}})= \Index_\Gamma\left(\tilde{Q}_{--}(t_2,t_1)\right) \quad.
\end{equation*}
\end{theo}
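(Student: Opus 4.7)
The plan is to mirror, step by step, the two-stage argument used for Theorem~\ref{indexDgaAPStwist}, replacing the spinor bundle $\spinb^{+}(M)$ by $\spinb^{-}(M)$, the Dirac operator $D$ by $\tilde{D}$, and the wave evolution operator $Q$ by $\tilde{Q}$. First I would introduce the boundary operators adapted to negative chirality,
\begin{align*}
\tilde{\mathbb{P}}_{+} &:= \bigl(P_{(-\infty,0)}(t_1)\circ\rest{\Sigma_1}\bigr) \oplus \bigl(P_{(0,\infty)}(t_2)\circ\rest{\Sigma_2}\bigr),\\
\tilde{\mathbb{P}}_{-} &:= \bigl(P_{[0,\infty)}(t_1)\circ\rest{\Sigma_1}\bigr) \oplus \bigl(P_{(-\infty,0]}(t_2)\circ\rest{\Sigma_2}\bigr),
\end{align*}
viewed as $\Gamma$-morphisms from $FE^0_\Gamma(M,[t_1,t_2],\tilde{D})$ into the appropriate direct sum of spectral $L^2$-subspaces on $\Sigma_1$ and $\Sigma_2$. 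Theorem~\ref{inivpwellgammatempcomp} together with the s-regularity of the spectral projections in \eqref{sregprojAtanycut} provides boundedness, $\Gamma$-invariance and surjectivity, and by construction one has $\kernel{\tilde{\mathbb{P}}_{+}}=FE^0_{\Gamma,\mathrm{APS}}(M,[t_1,t_2],\tilde{D})$ and $\kernel{\tilde{\mathbb{P}}_{-}}=FE^0_{\Gamma,\mathrm{aAPS}}(M,[t_1,t_2],\tilde{D})$.

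In the first stage I would apply Lemma~\ref{funcanagammalemma} with $\mathscr{H}=FE^0_\Gamma(M,[t_1,t_2],\tilde{D})$, $\mathscr{H}_1$ the codomain of $\tilde{\mathbb{P}}_{\pm}$, $\mathscr{H}_2=L^2(\spinb^{+}(M))$, $B=\tilde{D}$ and $A=\tilde{\mathbb{P}}_{\pm}$, reducing $\Gamma$-Fredholmness of $\tilde{\mathbb{P}}_{\pm}\oplus\tilde{D}$ to that of $C=\tilde{\mathbb{P}}_{\pm}|_{\kernel{\tilde{D}}}\oplus\Iop{\mathscr{H}_2}$. By Corollary~\ref{homivpwellgammatempcomp} every element of $\kernel{\tilde{D}}$ is determined by $u|_{\Sigma_1}$ with $u|_{\Sigma_2}=\tilde{Q}(t_2,t_1)u|_{\Sigma_1}$, so a direct computation, matching spectral ranges, identifies
$$
\kernel{\tilde{\mathbb{P}}_{+}|_{\kernel{\tilde{D}}}}\cong\kernel{\tilde{Q}_{++}(t_2,t_1)}, \qquad \kernel{\tilde{\mathbb{P}}_{-}|_{\kernel{\tilde{D}}}}\cong\kernel{\tilde{Q}_{--}(t_2,t_1)}
$$
with the mapping properties recorded in \eqref{Qfredmappingproperty}. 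Closedness and the $\Gamma$-codimension of $\range{C}$ are then computed exactly as in the proof of Theorem~\ref{indexDgaAPStwist}: the orthogonal complement of the range is carried onto $\kernel{(\tilde{Q}_{\pm\pm})^{\ast}}$ by the adjoint of the off-diagonal entry $\tilde{Q}_{\mp\pm}$, which is a $\Gamma$-isomorphism there by the negative-chirality analogue of Lemma~\ref{kernelisoQ}. Invoking the $\Gamma$-Fredholmness of $\tilde{Q}_{\pm\pm}$ from Theorem~\ref{propQposnegfred} yields $\Gamma$-Fredholmness of $\tilde{\mathbb{P}}_{\pm}\oplus\tilde{D}$ with
$$
\Index_\Gamma(\tilde{\mathbb{P}}_{+}\oplus\tilde{D})=\Index_\Gamma(\tilde{Q}_{++}(t_2,t_1)), \qquad \Index_\Gamma(\tilde{\mathbb{P}}_{-}\oplus\tilde{D})=\Index_\Gamma(\tilde{Q}_{--}(t_2,t_1)).
$$

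In the second stage I would reapply Lemma~\ref{funcanagammalemma}, now with the roles swapped, setting $B=\tilde{\mathbb{P}}_{\pm}$ (onto by the above) and $A=\tilde{D}$. Since $\tilde{D}|_{\kernel{\tilde{\mathbb{P}}_{+}}}=\tilde{D}_{\mathrm{APS}}$ and $\tilde{D}|_{\kernel{\tilde{\mathbb{P}}_{-}}}=\tilde{D}_{\mathrm{aAPS}}$, the lemma transports $\Gamma$-Fredholmness and the $\Gamma$-index from stage one to the Dirac operators themselves, giving the stated formulas. The only delicate bookkeeping compared to the positive chirality proof is the spectral matching: because the (a)APS conditions for negative chirality interchange the roles of $P_{\geq 0}$ and $P_{\leq 0}$ relative to the $D=D_{+}$ case, the kernels identified in stage one are $\tilde{Q}_{++}$ for APS and $\tilde{Q}_{--}$ for aAPS, which is the reverse of the assignment for $D_{+}$. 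Once this identification is set correctly, every algebraic manipulation from the proof of Theorem~\ref{indexDgaAPStwist} transfers verbatim and the equality of indices follows without further effort.
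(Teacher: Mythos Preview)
Your proposal is correct and follows essentially the same approach as the paper: the paper's proof simply observes that the boundary operator for $FE^0_{\Gamma,\mathrm{APS}}(M,[t_1,t_2],\tilde{D})$ coincides with $\mathbb{P}_{-}$ and that of $FE^0_{\Gamma,\mathrm{aAPS}}(M,[t_1,t_2],\tilde{D})$ with $\mathbb{P}_{+}$, and then invokes the proof strategy of Theorem~\ref{indexDgaAPStwist} verbatim with $\tilde{Q}$ in place of $Q$. You have spelled out this strategy explicitly, including the key spectral bookkeeping that swaps the roles of the diagonal entries, arriving at $\tilde{Q}_{++}$ for APS and $\tilde{Q}_{--}$ for aAPS exactly as claimed.
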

\begin{proof}
The boundary value operator of $FE^0_{\Gamma,\mathrm{APS}}(M,[t_1,t_2],\tilde{D}^{})$ coincides with the one of $FE^0_{\Gamma,\mathrm{aAPS}}(M,[t_1,t_2],D^{})$ and the boundary value operators of $FE^0_{\Gamma,\mathrm{aAPS}}(M,[t_1,t_2],\tilde{D}^{})$ and $FE^0_{\Gamma,\mathrm{APS}}(M,[t_1,t_2],D^{})$ are equal. The same proof strategy then shows that $\tilde{D}^{}_{\mathrm{APS}}$ and $\tilde{D}^{}_{\mathrm{aAPS}}$ are $\Gamma$-Fredholm with claimed $\Gamma$-indices.
\end{proof}

\section{Concluding remarks}\label{chap:concremarks}

We have restricted the proof to the classical spin Dirac operator and considered ordinary (anti) Atiyah-Patodi-Singer boundary conditions in order to keep the presentation of the $L^2$-$\Gamma$-Fredholm property simple. But the proof in fact works out in a more general setting and only little further modifications have to be taken into account. We first allow $M$ to have a $\text{spin}^c$-structure such that $M$ carries an associated Hermitian line bundle $L\rightarrow M$ which is either already defined on the covering or can be lifted from a corresponding line bundle $L_\Gamma \rightarrow M_\Gamma$ over the base $M_\Gamma$. The spinor bundles $\spinb^{\pm}(M)$ are then only locally defined. But if we consider the square root bundle of $L$, which is also only locally defined, the twisted spinor bundle $\spinb^{\pm}_L(M):=\spinb(M)\otimes L^{1/2}$ is a globally well-defined bundle and defined as $\text{spin}^c$ spinor bundle. Moreover, we can additionally twist the spin structure with any Hermitian $\Gamma$-vector bundle $E\rightarrow M$, leading to a twisted $\text{spin}^c$ spinor bundle $\spinb^{\pm}_{L,E}(M):=\spinb^{\pm}_{L}\otimes E$. We express both twistings with an overall twisting bundle $E_L$. Both $\Gamma$-vector bundles $L$ and $E$ are assumed to carry a metric connection. The Dirac operators $D^{E_L}_{\pm}: C^\infty(\spinb^{\pm}_{L,E}(M))\rightarrow C^\infty(\spinb^{\mp}_{L,E}(M))$ of our interest are supposed to commute or rather intertwine between the left action representation of $\Gamma$ on $M$.\\
\\
Another conceivable generalisation is applied on the boundary conditions. We already defined in \clef{gapsbc} and \clef{gaapsbc} or any two real numbers $a_1,a_2$ generalised (anti) Atiyah-Patodi-Singer boundary conditions. The presented proof of the $\Gamma$-Fredholmness works out as for ordinary APS or aAPS boundary conditions. We set as suitable boundary condition operators
\begin{equation*}
\begin{split}
\mathbb{P}_{+}&:=(P_{\intervallro{a_1}{\infty}{}}(t_1)\circ\rest{\Sigma_1})\oplus (P_{\intervalllo{-\infty}{a_2}{}}(t_2)\circ\rest{\Sigma_{2}}) \\
\mathbb{P}_{-}&:=(P_{\intervallo{-\infty}{a_1}{}}(t_1)\circ\rest{\Sigma_1})\oplus (P_{\intervallo{a_2}{\infty}{}}(t_2)\circ\rest{\Sigma_{2}}) \quad.
\end{split}
\end{equation*} 
The spaces of (twisted) finite energy spinor sections under generalised (a)APS boundary conditions in the $\Gamma$-setting, i.e.
\begin{equation*}
\begin{split}
FE^0_{\Gamma,\mathrm{APS}(a_1,a_2)}(M,[t_1,t_2],D_{\pm}^{E_L})&=\SET{u \in FE^0_{\Gamma}(M,[t_1,t_2],D_{\pm}^{E_L})\cap \kernel{\mathbb{P}_{\pm}}}\\ 
FE^0_{\Gamma,\mathrm{aAPS}(a_1,a_2)}(M,[t_1,t_2],D_{\pm}^{E_L})&=\SET{u \in FE^0_{\Gamma}(M,[t_1,t_2],D_{\pm}^{E_L})\cap \kernel{\mathbb{P}_{\mp}}}\\ 
\end{split}
\end{equation*}
then turn out to be suitable domains for $D^{E_L}_{\pm}$, defining the Dirac operators under generalised (anti) Atiyah-Patodi-Singer boundary conditions:
\begin{equation}\label{gammafredoperresnew}
\begin{split}
D^{E_L}_{\pm,\mathrm{APS}(a_1,a_2)}\,&:\,FE^0_{\Gamma,\mathrm{APS}(a_1,a_2)}(M,[t_1,t_2],D_{\pm}^{E_L})\,\rightarrow L^2(\spinb^{\mp}_{L,E}(M))\\
D^{E_L}_{\pm,\mathrm{aAPS}(a_1,a_2)}\,&:\,FE^0_{\Gamma,\mathrm{aAPS}(a_1,a_2)}(M,[t_1,t_2],D_{\pm}^{E_L})\,\rightarrow L^2(\spinb^{\mp}_{L,E}(M))\quad,
\end{split}
\end{equation}
i.e. $D^{E_L}_{\pm,\mathrm{APS}(a_1,a_2)}=D^{E_L}_{\pm}\oplus\mathbb{P}_{\pm}$ and $D^{E_L}_{\pm,\mathrm{aAPS}(a_1,a_2)}=D^{E_L}_{\pm}\oplus\mathbb{P}_{\mp}$. \\
\\
Considering all these modifications, our main result \Cref{fredDpmgaAPStwist} extends to the following formulation.

\begin{theo}\label{fredDpmgaAPStwistnew}
Let $a_1,a_2 \in \R$, $M$ a temporal compact, globally hyperbolic spatial $\Gamma$-manifold with compact base $M_\Gamma$ and time domain $[t_1,t_2]$, and $\spinb^{\pm}_{L,E}(M)\rightarrow M$ the twisted $\Gamma$-spin bundles. The $\Gamma$-invariant Dirac operators 
\begin{equation*}
D^{E_L}_{\pm,\mathrm{APS}(a_1,a_2)}\,:\,FE^0_{\Gamma,\mathrm{APS}(a_1,a_2)}(M,[t_1,t_2],D^{E_L}_{\pm})\,\rightarrow L^2(\spinb^{\mp}_{L,E}(M))
\end{equation*}
and
\begin{equation*}
D^{E_L}_{\pm,\mathrm{aAPS}(a_1,a_2)}\,:\,FE^0_{\Gamma,\mathrm{aAPS}(a_1,a_2)}(M,[t_1,t_2],D^{E_L}_{\pm})\,\rightarrow L^2(\spinb^{\mp}_{L,E}(M))
\end{equation*}
as lifts of Dirac operators on the base manifold $M_\Gamma$ are $\Gamma$-Fredholm. The $\Gamma$-indices satisfy
\begin{equation}\label{genQindexrel}
\Index_{\Gamma}(D^{E_L}_{\pm,\mathrm{APS}(a_1,a_2)})=-\Index_{\Gamma}(D^{E_L}_{\pm,\mathrm{aAPS}(a_1,a_2)})
\end{equation}
and are related to the Dirac operators $D^{E_L}_{\pm,\mathrm{APS}}:=D^{E_L}_{\pm,\mathrm{APS}(0,0)}$ for ordinary (a)APS boundary conditions as follows:

\begin{align}\label{indexDgaAPSformintronew}
\Index_{\Gamma}(D^{E_L}_{+,\mathrm{APS}(a_1,a_2)})&= \Index_{\Gamma}(D^{E_L}_{+,\mathrm{APS}}) \nonumber\\
&+ \Chi_{\SET{a_2 < 0}}\dim_\Gamma\left(L^2_{(a_2,0]}(\spinb_{L,E}(\Sigma_{1}))\right)-\Chi_{\SET{a_2 > 0}}\dim_\Gamma\left(L^2_{(0,a_2]}(\spinb_{L,E}(\Sigma_{2}))\right)\nonumber\\
&+ \Chi_{\SET{a_1 > 0}}\dim_\Gamma\left(L^2_{[0,a_1)}(\spinb_{L,E}(\Sigma_{1}))\right)-\Chi_{\SET{a_1 < 0}}\dim_\Gamma\left(L^2_{[a_1,0)}(\spinb_{L,E}(\Sigma_{2}))\right)\nonumber \\[5pt]
&\text{and}\\[5pt]
\Index_{\Gamma}(D^{E_L}_{-,\mathrm{APS}(a_1,a_2)})&=\Index_{\Gamma}(D^{E_L}_{-,\mathrm{APS}}) \quad . \nonumber \\
&+\Chi_{\SET{a_2 > 0}}\dim_\Gamma\left(L^2_{(0,a_2]}(\spinb_{L,E}(\Sigma_{1}))\right)-\Chi_{\SET{a_2 < 0}}\dim_\Gamma\left(L^2_{(a_2,0]}(\spinb_{L,E}(\Sigma_{2}))\right) \nonumber\\
&+ \Chi_{\SET{a_1 < 0}}\dim_\Gamma\left(L^2_{[a_1,0)}(\spinb_{L,E}(\Sigma_{1}))\right)-\Chi_{\SET{a_1 > 0}}\dim_\Gamma\left(L^2_{[0,a_1)}(\spinb_{L,E}(\Sigma_{2}))\right)\nonumber \\ \nonumber
\end{align} 
\end{theo}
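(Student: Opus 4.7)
The plan is to reduce \Cref{fredDpmgaAPStwistnew} to the already-established \Cref{indexDgaAPStwist} and \Cref{indexDneggaAPS} by two independent modifications. First, passing from the plain spin bundle to the twisted $\mathrm{spin}^c$ bundle $\spinb^{\pm}_{L,E}(M)$ does not affect any ingredient of the argument: the twisted Dirac operator $D^{E_L}_{\pm}$ is still a first-order elliptic $\Gamma$-invariant differential operator between sections of $\Gamma$-vector bundles, and the twisted hypersurface Dirac operator $A^{E_L}_{t}$ is still elliptic, essentially self-adjoint, and s-regular. Consequently the construction of spectral projections as s-regular $\Gamma$-pseudo-differential operators of order $0$ at any cutoff (see \Cref{projsreggenop} and \eqref{sregprojAtanycut}), the well-posedness result of \Cref{inivpwellgammatempcomp}, and the Fourier-integral-operator structure of the wave evolution operators in \Cref{propsofpropgamma} all carry over verbatim, giving spectral matrix entries $Q^{E_L;a_1,a_2}_{\pm\pm}$, $Q^{E_L;a_1,a_2}_{\pm\mp}$ whose off-diagonal parts remain $\Gamma$-compact (the twisted analogues of \Cref{propQposneg}--\Cref{propQposnegfred}).

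Granted this, \Cref{funcanagammalemma} applied to $(D^{E_L}_{\pm},\mathbb{P}_{\pm})$ and $(D^{E_L}_{\pm},\mathbb{P}_{\mp})$ as in the proof of \Cref{indexDgaAPStwist} yields $\Gamma$-Fredholmness of both operators in \eqref{gammafredoperresnew} together with
\begin{equation*}
\Index_\Gamma(D^{E_L}_{+,\mathrm{APS}(a_1,a_2)})=\Index_\Gamma(Q^{E_L;a_1,a_2}_{--}),\qquad \Index_\Gamma(D^{E_L}_{+,\mathrm{aAPS}(a_1,a_2)})=\Index_\Gamma(Q^{E_L;a_1,a_2}_{++}),
\end{equation*}
and the analogous identities for $\tilde{D}^{E_L}_{\pm}$ involving $\tilde{Q}^{E_L;a_1,a_2}_{\pm\pm}$. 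The symmetry relation \eqref{genQindexrel} is then purely formal: APS and aAPS at $(a_1,a_2)$ are mutually adjoint boundary conditions, so the argument of \Cref{indexDpmgaAPStwist} applies unchanged.

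The substantive new content is \eqref{indexDgaAPSformintronew}. My plan is to relate $Q^{E_L;a_1,a_2}_{--}$ to $Q^{E_L;0,0}_{--}$ through change-of-cutoff morphisms between the relevant spectral subspaces. On each hypersurface $\Sigma_j$ the subspaces $L^2_{(-\infty,a_j]}$ (or $L^2_{(-\infty,a_j)}$) and the corresponding subspace with cutoff $0$ differ by the bounded-spectrum subspace $L^2_{[\min(a_j,0),\,\max(a_j,0))}(\spinb_{L,E}(\Sigma_j))$, which has finite $\Gamma$-dimension because $\Chi_{I}(A^{E_L}_{t_j})$ is s-smoothing for bounded $I$ by \Cref{remarkssmoothing}~(6). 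Depending on the sign of $a_j$ this change-of-cutoff map is either the natural inclusion or a restricted projector in the sense of \eqref{restrictedprojections}; \Cref{fredrestrictedprojections} together with the calculus of \Cref{propgammafred} identifies its $\Gamma$-index with $\pm\dim_\Gamma L^2_{[\min(a_j,0),\,\max(a_j,0))}(\spinb_{L,E}(\Sigma_j))$. Writing $\iota_j$ for the change-of-cutoff morphism at $\Sigma_j$, the operator $Q^{E_L;a_1,a_2}_{--}$ agrees with $\iota_2\circ Q^{E_L;0,0}_{--}\circ\iota_1$ up to a remainder of the form $P_{I}(t_2)\circ Q^{E_L}\circ(\cdot)$ for a bounded spectral interval $I$, which is finite $\Gamma$-rank and therefore $\Gamma$-trace class. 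Additivity of $\Index_\Gamma$ under composition of $\Gamma$-Fredholm operators and its invariance under $\Gamma$-compact perturbations (\Cref{propgammafred}) then yield the four-term correction displayed in \eqref{indexDgaAPSformintronew}; the negative-chirality computation is identical with $\tilde{Q}^{E_L;a_1,a_2}_{\pm\pm}$ in place of $Q^{E_L;a_1,a_2}_{\pm\pm}$, the exchange of the two spectral halves accounting for the permutation of $\Sigma_1$ and $\Sigma_2$ subscripts between the two halves of \eqref{indexDgaAPSformintronew}.

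The main obstacle is exclusively bookkeeping: the indicator-function formulation compresses four sign cases for $(a_1,a_2)\in\mathbb{R}^2$ and two chiralities into a single identity, so one must verify case by case that the $\Gamma$-dimension corrections land on the correct hypersurface, with the correct sign, and on the correct half of the bounded spectral interval $[\min(a_j,0),\,\max(a_j,0))$. No new analytic ingredients beyond those gathered in \Cref{chap:Galois}--\Cref{chap:gammasob} are required.
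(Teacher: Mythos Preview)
Your proposal is correct and matches the paper's own argument essentially step for step. The paper proceeds exactly as you outline: it first notes that twisting by $E_L$ leaves all analytic ingredients intact, then (its \Cref{qgengammafred}) factors the generalised spectral entries as $Q^{\leq a_2}_{<a_1}=\overline{P}^{\leq 0}_{\leq a_2}(t_2)\,Q_{--}\,\overline{P}^{<a_1}_{<0}(t_1)+\mathscr{S}^1_\Gamma$ using the restricted projections of \eqref{restrictedprojections}---precisely your ``change-of-cutoff morphisms'' $\iota_j$---computes their $\Gamma$-indices via \Cref{fredrestrictedprojections}, and invokes additivity under composition and stability under $\Gamma$-compact perturbation; finally \Cref{funcanagammalemma} transfers everything to $D^{E_L}_{\pm,\mathrm{(a)APS}(a_1,a_2)}$ exactly as in \Cref{indexDgaAPStwist}.
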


The proof relies on the well-posedness results in the twisted setting which we also showed in \cite{OD1} under more general assumptions on the spacetime $M$. These results still hold for the twisting bundle $E_L$. The resulting wave evolution operators $Q^{E_L}$ and $\tilde{Q}^{E_L}$ for $D^{E_L}_{\pm}$ are again $\Gamma$-isomorphisms between $\Gamma$-Sobolev spaces, unitary between $L^2$-spinor sections and properly supported $\Gamma$-invariant Fourier integral operators of order $0$ between sections of the twisted spinor bundles, but with the same canonical relation. Their decompositions with respect to generalised (anti) Atiyah-Patodi-Singer boundary conditions are then similarly defined according to \clef{genL2splitgamma}. The important observation is that these new decomposits differ from $Q^{E_L}_{\pm\pm}$ and $Q^{E_L}_{\pm\mp}$ in compositions with $\Gamma$-Fredholm restricted operators (see \clef{restrictedprojections} and \Cref{fredrestrictedprojections}) modulo $\Gamma$-compact remainders. This affects the $\Gamma$-indices of the still retained $\Gamma$-Fredholm property of the diagonal matrix entries. We remark that in the following $\range{P_I(t_j)}$ stands for $L^2_I(\spinb_{L,E}(\Sigma_j))$.
\begin{theo}\label{qgengammafred}
\begin{equation*}
\begin{split}
Q^{> a_2}_{\geq a_1}(t_2,t_1) &\in \mathscr{F}_\Gamma(\range{P_{\geq a_1}(t_1)},\range{P_{> a_2}(t_2)})\\
Q^{\leq a_2}_{< a_1}(t_2,t_1) &\in \mathscr{F}_\Gamma(\range{P_{< a_1}(t_1)},\range{P_{\leq a_2}(t_2)}) \\
\end{split}
\end{equation*}
for $a_1,a_2 \in \R$ with $\Gamma$-indices
\begin{align}
\Index_\Gamma\left(Q^{> a_2}_{\geq a_1}(t_2,t_1)\right)=&\quad\Chi_{\SET{a_2 > 0}}\dim_\Gamma\left(\range{P_{(0,a_2]}}\right)-\Chi_{\SET{a_2 < 0}}\dim_\Gamma\left(\range{P_{(a_2,0]}}\right)\nonumber\\
&+ \Chi_{\SET{a_1 < 0}}\dim_\Gamma\left(\range{P_{[a_1,0)}}\right)-\Chi_{\SET{a_1 > 0}}\dim_\Gamma\left(\range{P_{[0,a_1)}}\right)\nonumber\\
&+ \Index_\Gamma\left(Q_{++}(t_2,t_1)\right)\label{qgengammaindexpospp} \\[2pt]
\Index_\Gamma\left(Q^{\leq a_2}_{< a_1}(t_2,t_1)\right)=&\quad\Chi_{\SET{a_2 < 0}}\dim_\Gamma\left(\range{P_{(a_2,0]}}\right)-\Chi_{\SET{a_2 > 0}}\dim_\Gamma\left(\range{P_{(0,a_2]}}\right)\nonumber\\
&+ \Chi_{\SET{a_1 > 0}}\dim_\Gamma\left(\range{P_{[0,a_1)}}\right)-\Chi_{\SET{a_1 < 0}}\dim_\Gamma\left(\range{P_{[a_1,0)}}\right)\nonumber \\
&+\Index_\Gamma\left(Q_{--}(t_2,t_1)\right) \label{qgengammaindexposmm} \quad.
\end{align}
\end{theo}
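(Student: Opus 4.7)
The plan is to express the diagonal restricted wave evolution operators $Q^{>a_2}_{\geq a_1}(t_2,t_1)$ and $Q^{\leq a_2}_{<a_1}(t_2,t_1)$ as compositions of three $\Gamma$-Fredholm maps modulo a $\Gamma$-compact error, where the middle factor is $Q_{++}(t_2,t_1)$ respectively $Q_{--}(t_2,t_1)$ (already known to be $\Gamma$-Fredholm by \Cref{propQposnegfred}) and the outer factors are restricted projections in the sense of \clef{restrictedprojections}. Concretely, for the first assertion I would work with
\begin{equation*}
\widetilde{Q} := \overline{P}^{>0}_{>a_2}(t_2)\circ Q_{++}(t_2,t_1)\circ \overline{P}^{\geq a_1}_{\geq 0}(t_1)\,:\,\range{P_{\geq a_1}(t_1)}\rightarrow \range{P_{>a_2}(t_2)}
\end{equation*}
and show that $Q^{>a_2}_{\geq a_1}(t_2,t_1)-\widetilde{Q}$ is $\Gamma$-compact. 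Expanding, the difference is a sum of triple products involving a spectral projector $P_I(t_j)$ for some bounded Borel interval $I$ with endpoints among $\{0,a_1,a_2\}$; by \Cref{remarkssmoothing} (6) such $P_I(t_j)$ are s-smoothing and thus $\Gamma$-trace class by \Cref{remarkssmoothing} (3), and composing with the $L^2$-bounded operator $Q$ preserves the $\Gamma$-ideal membership, so the correction lies in $\mathscr{K}_\Gamma$.

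The index of each outer factor is then extracted from \Cref{fredrestrictedprojections} by a four-way case analysis on the signs of $a_1$ and $a_2$. For $\overline{P}^{\geq a_1}_{\geq 0}(t_1)$: if $a_1\geq 0$ the operator is an inclusion of $\range{P_{\geq a_1}}$ into $\range{P_{\geq 0}}$ with trivial kernel and cokernel isomorphic to $\range{P_{[0,a_1)}(t_1)}$, yielding index $-\dim_\Gamma\range{P_{[0,a_1)}(t_1)}$; if $a_1<0$ the inclusion reverses and the operator is $P_{\geq 0}$ restricted to $\range{P_{\geq a_1}}$ with kernel $\range{P_{[a_1,0)}(t_1)}$ and surjective range, yielding index $+\dim_\Gamma\range{P_{[a_1,0)}(t_1)}$. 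The same dichotomy for $\overline{P}^{>0}_{>a_2}(t_2)$ contributes $+\Chi_{\SET{a_2>0}}\dim_\Gamma\range{P_{(0,a_2]}(t_2)}-\Chi_{\SET{a_2<0}}\dim_\Gamma\range{P_{(a_2,0]}(t_2)}$. Adding these contributions to $\Index_\Gamma(Q_{++}(t_2,t_1))$ via compositional additivity of the $\Gamma$-Fredholm index and invariance under $\Gamma$-compact perturbations (see \Cref{propgammafred} and \Cref{remsfredprop}) gives \clef{qgengammaindexpospp}. The second statement \clef{qgengammaindexposmm} follows by the same argument applied to the composition $\overline{P}^{\leq 0}_{\leq a_2}(t_2)\circ Q_{--}(t_2,t_1)\circ \overline{P}^{<a_1}_{<0}(t_1)$, where the inclusions and restrictions flip sign and reproduce the claimed correction terms.

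The main obstacle is the careful verification that the error between $Q^{>a_2}_{\geq a_1}(t_2,t_1)$ and the sandwich composition $\widetilde{Q}$ is genuinely $\Gamma$-compact on the correct domains, which forces one to track every spectral interval that appears under the commutator of $Q$ with the projectors $P_{\geq 0}$ and $P_{>0}$. Once this is set up, the bookkeeping in the four-way case analysis on $(\mathrm{sgn}\,a_1,\mathrm{sgn}\,a_2)$ is straightforward but must be carried out in each case to confirm the characteristic functions $\Chi_{\SET{a_j\gtrless 0}}$ and the signs in front of the dimension corrections come out exactly as in \clef{qgengammaindexpospp} and \clef{qgengammaindexposmm}.
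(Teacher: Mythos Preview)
Your proposal is correct and follows essentially the same route as the paper: express $Q^{>a_2}_{\geq a_1}$ (resp.\ $Q^{\leq a_2}_{<a_1}$) as the sandwich $\overline{P}^{>0}_{>a_2}(t_2)\circ Q_{++}\circ \overline{P}^{\geq a_1}_{\geq 0}(t_1)$ (resp.\ with $Q_{--}$) modulo a $\Gamma$-trace class remainder coming from projections onto bounded spectral intervals, then compute the $\Gamma$-indices of the outer restricted projections via \Cref{fredrestrictedprojections} by the sign dichotomy on $a_1,a_2$ and conclude by additivity of the $\Gamma$-index under composition and its invariance under $\Gamma$-compact perturbation. The paper carries out exactly this decomposition and the same case-by-case dimension/codimension bookkeeping you outline.
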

$\Chi_{\SET{a>0}}$ and $\Chi_{\SET{a<0}}$ are abbreviations for the characteristic functions $\Chi_{(0,\infty)}(a)$ respectively $\Chi_{(-\infty,0)}(a)$.
\begin{proof}
We show that $Q^{> a_2}_{\geq a_1}(t_2,t_1)$ and $Q^{\leq a_2}_{< a_1}(t_2,t_1)$ are compositions of $\Gamma$-Fredholm operators. We already know for $a_1=a_2=0$ that the entries $Q_{++}(t_2,t_1)$ and $Q_{--}(t_2,t_1)$ are $\Gamma$-Fredholm. We can rewrite the projections for the domains and ranges of $Q^{> a_2}_{\geq a_1}(t_2,t_1)$ and $Q^{\leq a_2}_{< a_1}(t_2,t_1)$ as spectral projections with spectral cut at $0$ with the help of \clef{restrictedprojections} such that $Q_{++}(t_2,t_1)$ and $Q_{--}(t_2,t_1)$ can be recovered. We have for all $a_1, a_2 \in \R$ 
\begin{align*}
P_{> a_2}(t_2) &=\overline{P}^{>0}_{>a_2}(t_2) P_{>0}(t_2)+\mathscr{S}^1_\Gamma \quad &,\quad P_{\geq a_1}(t_1) &= \overline{P}^{\geq a_1}_{\geq 0}(t_1)P_{\geq a_1}(t_1)+\mathscr{S}^1_\Gamma \quad, \\[3pt]
P_{\leq a_2}(t_2) &= \overline{P}^{\leq 0}_{\leq a_2}(t_2)P_{\leq 0}(t_2)+\mathscr{S}^1_\Gamma \quad &,\quad P_{< a_1}(t_1) &= \overline{P}^{< a_1}_{< 0}(t_1)P_{< a_1}(t_1)+\mathscr{S}^1_\Gamma \quad.
\end{align*}
The defects are of $\Gamma$-trace class as these are either spectral projections with zero or bounded spectral range and thus s-smoothing. We get
\begin{equation*}
\begin{split}
Q^{> a_2}_{\geq a_1}(t_2,t_1)&=\overline{P}^{>0}_{>a_2}(t) P_{>0}(t_2) Q(t_2,t_1) \overline{P}^{\geq a_1}_{\geq 0}(t_1)P_{\geq a_1}(t_1) + \mathscr{S}^1_\Gamma \\
&= \overline{P}^{>0}_{>a_2}(t)Q_{++}(t_2,t_1) \overline{P}^{\geq a_1}_{\geq 0}(t_1)P_{\geq a_1}(t_1) + \mathscr{S}^1_\Gamma\quad
\end{split}
\end{equation*}
for all $a_1,a_2 \in \R$ and likewise 
\begin{equation*}
\begin{split}
Q^{\leq  a_2}_{< a_1}(t_2,t_1)&=\overline{P}^{\leq 0}_{\leq a_2}(t) P_{\leq 0}(t_2) Q_(t_2,t_1) \overline{P}^{< a_1}_{< 0}(t_1)P_{< a_1}(t_1) + \mathscr{S}^1_\Gamma \\
&= \overline{P}^{\leq 0}_{\leq a_2}(t) Q_{--}(t_2,t_1) \overline{P}^{< a_1}_{< 0}(t_1)P_{< a_1}(t_1) + \mathscr{S}^1_\Gamma \quad.
\end{split}
\end{equation*}
If we consider $Q^{> a_2}_{\geq a_1}(t_2,t_1)$ on $\range{P_{\geq a_1}(t_1)}$ and $Q^{\leq  a_2}_{< a_1}(t_2,t_1)$ as operator on $\range{P_{< a_1}(t_1)}$, we gain
\begin{equation}\label{qgenasqnormal}
\begin{split}
Q^{> a_2}_{\geq a_1}(t_2,t_1)&=\overline{P}^{>0}_{>a_2}(t_2)Q_{++}(t_2,t_1) \overline{P}^{\geq a_1}_{\geq 0}(t_1)+ \mathscr{S}^1_\Gamma \\
Q^{\leq  a_2}_{< a_1}(t_2,t_1)&= \overline{P}^{\leq 0}_{\leq a_2}(t_2) Q_{--}(t_2,t_1) \overline{P}^{< a_1}_{< 0}(t_1) + \mathscr{S}^1_\Gamma\quad.
\end{split}
\end{equation}
As we would like to show $\Gamma$-Fredholmness of $Q^{>a_2}_{\leq a_1}(t_2,t_1)$ and $Q^{\geq a_2}_{<a_1}(t_2,t_1)$ with the help of the known $\Gamma$-Fredholmness of $Q_{\pm\pm}$ from \Cref{propQposnegfred}, it is left to check that the restricted projections $\overline{P}^{>0}_{>a_2}(t_2)$, $\overline{P}^{\geq a_1}_{\geq 0}(t_1)$, $\overline{P}^{\leq 0}_{\leq a_2}(t_2)$ and $\overline{P}^{< a_1}_{< 0}(t_1)$ are $\Gamma$-Fredholm. To do so, we calculate the $\Gamma$-dimensions and -codimensions according to \Cref{fredrestrictedprojections}:
\begin{eqnarray*}
\dim_\Gamma\left(\range{P_{>0}}\cap (\range{P_{> a_2}})^\perp\right) 
&=& \Chi_{\SET{a_2 > 0}}\dim_\Gamma\range{P_{(0,a_2]}} \, , \\[1pt]
\dim_\Gamma\left(\range{P_{\geq a_1}}\cap (\range{P_{\geq 0}})^\perp\right) 
&=&\Chi_{\SET{a_1 < 0}}\dim_\Gamma\range{P_{[a_1,0)}} \, , \\[1pt]
\dim_\Gamma\left(\range{P_{\leq 0}}\cap (\range{P_{\leq a_2}})^\perp\right) 
&=& \Chi_{\SET{a_2 < 0}}\dim_\Gamma\range{P_{(a_2,0]}} \, , \\[1pt]
\dim_\Gamma\left(\range{P_{<a_1}}\cap (\range{P_{< 0}})^\perp\right) 
&=&\Chi_{\SET{a_1 > 0}}\dim_\Gamma\range{P_{[0,a_1)}} \, , \\[1pt]
\codim_\Gamma\left(\range{P_{>0}}\cap \range{P_{> a_2}}\right) 
&=& \Chi_{\SET{a_2 < 0}}\dim_\Gamma\range{P_{(a_2,0]}}\, , \\[1pt]
\codim_\Gamma\left(\range{P_{\geq 0}}\cap \range{P_{\geq a_1}}\right)
&=& \Chi_{\SET{a_1 > 0}}\dim_\Gamma\range{P_{[0,a_1)}}\, , \\[1pt] 
\codim_\Gamma\left(\range{P_{\leq 0}}\cap \range{P_{\leq a_2}}\right) 
&=& \Chi_{\SET{a_2 > 0}}\dim_\Gamma\range{P_{(0,a_2]}} \, ,\\[1pt]
\codim_\Gamma\left(\range{P_{<0}}\cap \range{P_{< a_1}}\right) 
&=& \Chi_{\SET{a_1 < 0}}\dim_\Gamma\range{P_{[a_1,0)}}\quad .
\end{eqnarray*}
All occuring $\Gamma$-dimensions are finite because these spaces are images of spectral projections with bounded spectral ranges. Hence, the above restricted projections are $\Gamma$-Fredholm with $\Gamma$-indices
\begin{eqnarray*}
\Index_\Gamma\left(\overline{P}^{>0}_{>a_2}(t_2)\right) &=& \Chi_{\SET{a_2 > 0}}\dim_\Gamma\left(\range{P_{(0,a_2]}}\right)-\Chi_{\SET{a_2 < 0}}\dim_\Gamma\left(\range{P_{(a_2,0]}}\right)\\
\Index_\Gamma\left(\overline{P}^{\geq 0}_{\geq a_1}(t_1)\right) &=& \Chi_{\SET{a_1 < 0}}\dim_\Gamma\left(\range{P_{[a_1,0)}}\right)-\Chi_{\SET{a_1 > 0}}\dim_\Gamma\left(\range{P_{[0,a_1)}}\right)\\
\Index_\Gamma\left(\overline{P}^{\leq 0}_{\leq a_2}(t_2)\right) &=& \Chi_{\SET{a_2 < 0}}\dim_\Gamma\left(\range{P_{(a_2,0]}}\right)-\Chi_{\SET{a_2 > 0}}\dim_\Gamma\left(\range{P_{(0,a_2]}}\right)\\
\Index_\Gamma\left(\overline{P}^{<0}_{< a_1}(t_1)\right) &=& \Chi_{\SET{a_1 > 0}}\dim_\Gamma\left(\range{P_{[0,a_1)}}\right)-\Chi_{\SET{a_1 < 0}}\dim_\Gamma\left(\range{P_{[a_1,0)}}\right)\quad.
\end{eqnarray*}
Finally, $Q^{> a_2}_{\geq a_1}(t_2,t_1)$ and $Q^{\leq  a_2}_{< a_1}(t_2,t_1)$ in \clef{qgenasqnormal} becomes $\Gamma$-Fredholm as compositions of $\Gamma$-Fredholm operators with $\Gamma$-trace class pertubations. The invariance with respect to $\Gamma$-compact pertubations and the additivity of the $\Gamma$-index with respect to compositions imply to the given formulae.
\end{proof}
This result carries over to the case of negative chirality without further modifications. The proof of \Cref{indexDgaAPStwist} is purely functional algebraic and can be extended without further contentual modifications. It turns out that the $\Gamma$-indices of \clef{gammafredoperresnew} coincide with the $\Gamma$-Fredholm matrix entries with respect to generalised (anti) Atiyah-Patodi-Singer boundary conditions proving the following $\Gamma$-index expressions.
\begin{theo}\label{indexDgaAPStwist}
Let $a_1,a_2 \in \R$, $M$ a temporal compact, globally hyperbolic spatial $\Gamma$-manifold with compact base $M_\Gamma$ and time domain $[t_1,t_2]$, $\spinb^{+}_{L,E}(M)\rightarrow M$ the $\Gamma$-spin bundle of positive chirality which is twisted with a Hermitian $\Gamma$-vector bundle $E_L\rightarrow M$. The $\Gamma$-invariant Dirac operators $D^{E_L}_{\mathrm{APS}(a_1,a_2)}$ and $D^{E_L}_{\mathrm{aAPS}(a_1,a_2)}$ as lifts of Dirac operators on the base manifold are $\Gamma$-Fredholm with $\Gamma$-indices
\begin{equation*}
\Index_{\Gamma}(D^{E_L}_{\mathrm{APS}(a_1,a_2)})= \Index_\Gamma\left(Q^{\leq a_2}_{< a_1}(t_2,t_1)\right) \,\text{\&}\,\,\Index_{\Gamma}(D^{E_L}_{\mathrm{aAPS}(a_1,a_2)})=\Index_\Gamma\left(Q^{> a_2}_{\geq a_1}(t_2,t_1)\right)\, .
\end{equation*}
\end{theo}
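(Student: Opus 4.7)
The plan is to mirror the two-step strategy from the proof of \Cref{indexDgaAPStwist} with the generalised spectral cuts $a_1,a_2$, replacing the $\Gamma$-Fredholmness of $Q_{\pm\pm}(t_2,t_1)$ by the $\Gamma$-Fredholmness of the restricted diagonal matrix entries $Q^{>a_2}_{\geq a_1}(t_2,t_1)$ and $Q^{\leq a_2}_{<a_1}(t_2,t_1)$ from \Cref{qgengammafred}. The twisting bundle $E_L$ only enters through the twisted spinor bundle $\spinb^{\pm}_{L,E}(M)$, but the entire functional-algebraic apparatus carries over since the well-posedness results and the wave evolution operators $Q^{E_L}$ and $\tilde{Q}^{E_L}$ share the same structural properties as in the untwisted case, including their decomposition into a $\Gamma$-Fredholm diagonal and a $\Gamma$-compact off-diagonal.

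First I would introduce the boundary condition operators $\mathbb{P}_{+}$ and $\mathbb{P}_{-}$ as defined in the concluding remarks and verify that they are surjective $\Gamma$-morphisms onto their respective codomains
\begin{equation*}
\mathscr{H}_1^{+}:=L^2_{[a_1,\infty)}(\spinb^{+}_{L,E}(\Sigma_1))\oplus L^2_{(-\infty,a_2]}(\spinb^{+}_{L,E}(\Sigma_2))
\end{equation*}
and analogously for $\mathscr{H}_1^{-}$; surjectivity reduces via \Cref{inivpwellgammatempcomp} to the surjectivity of the $\rest{\Sigma_j}$ composed with the projections, and boundedness with $\Gamma$-equivariance follows from the $s$-regularity of the spectral projections $P_I(t_j)$. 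Combined with the surjectivity of $D^{E_L}_{\pm}$ from the twisted well-posedness result and the fact that $FE^0_{\Gamma}(M,[t_1,t_2],D^{E_L}_{\pm})$ is a free Hilbert $\Gamma$-module, this places us squarely in the setup of \Cref{funcanagammalemma}.

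Next I would apply \Cref{funcanagammalemma} twice. In the first application I take $A=\mathbb{P}_{\pm}$, $B=D^{E_L}_{\pm}$ and reduce $\Gamma$-Fredholmness of $\mathbb{P}_{\pm}\oplus D^{E_L}_{\pm}$ to that of $C=\mathbb{P}_{\pm}|_{\kernel{D^{E_L}_{\pm}}}\oplus\Iop{L^2}$. By \Cref{homivpwellgammatempcomp} the restriction $\rest{\Sigma_1}$ identifies $\kernel{D^{E_L}_{+}}$ with $L^2(\spinb^{+}_{L,E}(\Sigma_1))$, so that $\mathbb{P}_{+}|_{\kernel{D^{E_L}_{+}}}$ becomes, up to the $\Gamma$-isomorphism $Q^{E_L}(t_2,t_1)$, a block operator whose kernel and cokernel (after using \Cref{kernelisoQ} in the twisted setting) coincide with those of $Q^{E_L,\leq a_2}_{<a_1}(t_2,t_1)$; the analogous computation for $\mathbb{P}_{-}$ yields $Q^{E_L,>a_2}_{\geq a_1}(t_2,t_1)$. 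In the second application I invert the roles, taking $A=D^{E_L}_{\pm}$ and $B=\mathbb{P}_{\pm}$, so that $D^{E_L}_{\pm}|_{\kernel{\mathbb{P}_{\pm}}}=D^{E_L}_{\pm,\mathrm{APS}(a_1,a_2)}$ inherits $\Gamma$-Fredholmness and the same $\Gamma$-index from $\mathbb{P}_{\pm}\oplus D^{E_L}_{\pm}$; the same works for the aAPS case with $\mathbb{P}_{\mp}$. The relation \clef{genQindexrel} then follows from the system \clef{Qsys1}--\clef{Qsys2} applied to $Q^{E_L}$, combined with \Cref{qgengammafred}, and \clef{indexDgaAPSformintronew} follows by plugging the explicit $\Gamma$-index formulae \clef{qgengammaindexpospp}--\clef{qgengammaindexposmm} into the identifications just established.

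The main obstacle is neither algebraic nor analytic but bookkeeping: one must carefully track which spectral subspace $\range{P_I(t_j)}$ appears as domain versus codomain when the signs of $a_1$ and $a_2$ vary, since the characteristic functions $\Chi_{\{a_j\gtrless 0\}}$ in \clef{qgengammaindexpospp}--\clef{qgengammaindexposmm} permute the roles of the correction terms between $\Sigma_1$ and $\Sigma_2$. A secondary technical point is to verify that $\kernel{D^{E_L}_{\pm}}$, which is a projective Hilbert $\Gamma$-module by \Cref{homivpwellgammatempcomp} and \Cref{helpinglemma2}, is mapped by $\mathbb{P}_{\pm}$ onto a closed range with the right $\Gamma$-Fredholm properties; this reduces exactly to the $\Gamma$-Fredholmness of $Q^{E_L,\leq a_2}_{<a_1}(t_2,t_1)$ and $Q^{E_L,>a_2}_{\geq a_1}(t_2,t_1)$ already established in \Cref{qgengammafred}, so no genuinely new analytic input is required beyond the observation that all ingredients in that proof (restricted projections, $s$-smoothing defects, unitarity of $Q^{E_L}$ on $L^2$) are insensitive to the presence of the twisting bundle $E_L$.
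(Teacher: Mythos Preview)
Your proposal is correct and follows essentially the same approach the paper indicates: the paper explicitly states that the proof is ``purely functional algebraic and can be extended without further contentual modifications'' from the untwisted $a_1=a_2=0$ case, with details deferred to \cite{ODT}. Your two applications of \Cref{funcanagammalemma} with the roles of $D$ and $\mathbb{P}_{\pm}$ interchanged, the identification of $\mathbb{P}_{+}\vert_{\kernel{D}}$ with the generalised diagonal entry $Q^{\leq a_2}_{<a_1}$ via \Cref{homivpwellgammatempcomp} and \Cref{kernelisoQ}, and the reduction of all analytic content to \Cref{qgengammafred} reproduce precisely this scheme; in fact you have spelled out more detail than the paper itself provides here.
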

An analogue statement for $\tilde{D}^{E_L}_{\mathrm{APS}(a_1,a_2)}$ and $\tilde{D}^{E_L}_{\mathrm{aAPS}(a_1,a_2)}$ can be formulated. The details and additional steps of the proof are explained and performed in \cite{ODT}.

\appendix

\section{Manifolds of bounded geometry}\label{chap:manbound}

We give a short description about manifolds of bounded geometry, based on material from \cite{shubinspec} and \cite{kordyu2}. Suppose $\Sigma$ is a connected Riemannian manifold of dimension $n$. The Riemannian metric $\met$ induces a distance function $d:\Sigma\times\Sigma\rightarrow \R$ by taking the infimum of lengths of arcs, connecting two points and measured with respect to the Riemannian metric. The exponential geodesic map $\exp_p:T_p\Sigma \rightarrow \Sigma$ is defined through $\exp_p(v)=\gamma(1)$ where $\gamma$ is a geodesic, given in such a parametrisation, that $\gamma(0)=p$ and $\dot{\gamma}(0)=v$. If we take a ball $\oball{n}{r}{0}$ of radius $r>0$ around the zero vector in $T_p\Sigma$, the exponential map becomes a diffeomorphism onto its image $U_{r}(p)=\exp(\oball{n}{r}{0})$ which is an open neighbourhood of $p\in \Sigma$. Let
$$ r(p):=\sup\SET{r>0\,\Big\vert\, \exp_p:\oball{n}{r}{0}\rightarrow U_r(p)\,\,\text{diffeomorphism}} $$
The infimum over all points $p\in \Sigma$ then defines the \textit{injectivity radius} $r_{\mathsf{inj}}$.
For $r_{\mathsf{inj}}>0$ and $r\in (0,r_{\mathsf{inj}})$ the exponential map $\exp_p:\oball{n}{r}{0}\rightarrow U_r(p)$ is a diffeomorphism for all $p\in \Sigma$ which motivates the following definition.
\begin{defi}
Let $\Sigma$ be a Riemannian manifold; we say that $\Sigma$ is a \textit{manifold of/with bounded geometry} if the following two conditions are satisfied:
\begin{itemize}
\item[(a)] $r_{\mathsf{inj}}>0$;
\item[(b)] the transition maps from $U_r(p)\cap U_r(q)$ to $\R^n$ are bounded for any fixed $r\in (0,r_{\mathsf{inj}})$ and any points $p,q$ with open neighbourhoods $U_r(p),U_r(q)$ such that $U_r(p)\cap U_r(q)\neq \emptyset$.
\end{itemize}
\end{defi}
(a) implies that $\Sigma$ is complete and (b) is equivalent to the boundedness of all covariant derivatives of the Riemannian curvature tensor. Examples of such spaces are homogenous spaces with invariant metrics, covering manifolds of compact manifolds\bnote{F16} as well as leaves of foliations on a compact manifold. We call a vector bundle $E$ to be of bounded geometry if all derivatives of the vector bundle transition functions on $U_r(p)\cap U_r(q)\neq \emptyset$ are bounded for all $p,q \in \Sigma$. Examples of such vector bundles are $\Sigma\times\C$ and complexifications of $T\Sigma$, $T^\ast\Sigma$ as well as any other complexified natural bundle. \\
\\
For a manifold of bounded geometry $\Sigma$ there exists a radius $R>0$ such that for all $r\in (0,R)$ the manifold can be covered by open balls $\oball{}{r}{p_i}$ around countably many points $p_i \in \Sigma$:
\begin{equation}\label{boundgeomcov}
\Sigma=\bigcup_{i}\oball{}{r}{p_i} \quad;
\end{equation}
this covering has the property that if we double the radii, but fixing the centres of the open balls in $\Sigma$, this covering has a finite number of non-empty intersections. This implies the existence of a suitable partition of unity (see \cite[Lem.1.2]{shubinspec} and \cite[Lem.1.4]{shubinspec}).
\begin{lem}\label{manboundgeopartun}
For every $R>0$ there exists a partition of unity $\SET{\phi_i}$ on $\Sigma$ such that
\begin{itemize}
\item[(1)] $\phi_i\in C^\infty_\comp(\Sigma,\R_{\geq 0})$ with $\mathrm{supp}(\phi_i)\subset \oball{}{2r}{p_i}$ for all $p_i$ from \clef{boundgeomcov}.
\item[(2)] all derivatives are uniformly bounded. 
\end{itemize}
\end{lem}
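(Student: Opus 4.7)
The plan is to build the partition of unity in three stages: first produce uniformly nice bump functions in each normal coordinate chart, then sum them over the covering, and finally normalise, exploiting the uniformly finite overlap of the doubled covering to control derivatives.

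First, I would fix an $R>0$ and choose $r\in(0,\min\SET{R,r_{\mathsf{inj}}/3})$ so that the exponential maps $\exp_{p_i}:\oball{n}{3r}{0}\rightarrow U_{3r}(p_i)$ are diffeomorphisms, and select a countable covering as in \clef{boundgeomcov}. By the bounded geometry assumption, the number $N:=\sup_{p\in \Sigma}\#\SET{i\,\vert\, p\in \oball{}{2r}{p_i}}$ is finite, and the transition maps between normal charts $\exp_{p_j}^{-1}\circ\exp_{p_i}$ are smooth with all derivatives bounded uniformly in $i,j$ on their domains of definition. I would then fix a single smooth, radial cut-off $\chi\in C^\infty_\comp(\R^n,[0,1])$ with $\chi\equiv 1$ on $\oball{n}{r}{0}$ and $\supp{\chi}\subset \oball{n}{2r}{0}$, and define $\psi_i(p):=\chi(\exp_{p_i}^{-1}(p))$ on $U_{3r}(p_i)$, extended by zero outside. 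Each $\psi_i$ is smooth, compactly supported in $\oball{}{2r}{p_i}$, equals $1$ on $\oball{}{r}{p_i}$, and takes values in $[0,1]$.

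Second, I would form $\Psi:=\sum_i \psi_i$. By the uniformly finite overlap, this sum is locally finite with at most $N$ non-vanishing terms at any point, so $\Psi\in C^\infty(\Sigma)$; since the balls $\oball{}{r}{p_i}$ already cover $\Sigma$, the lower bound $\Psi\geq 1$ holds everywhere. Then I set
\begin{equation*}
\phi_i:=\frac{\psi_i}{\Psi}\quad.
\end{equation*}
This gives $\phi_i\in C^\infty_\comp(\Sigma,\R_{\geq 0})$, $\supp{\phi_i}\subset \oball{}{2r}{p_i}$ and $\sum_i \phi_i\equiv 1$, which is property (1).

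Third, I would prove uniform boundedness of derivatives. The key observation is that when one expresses $\phi_i$ in the normal chart at any $p_j$ with $\oball{}{2r}{p_i}\cap\oball{}{2r}{p_j}\neq\emptyset$, it is built from $\chi$, from a bounded number (at most $N$) of the functions $\psi_k$ transported through transition maps $\exp_{p_k}^{-1}\circ\exp_{p_j}$, and from the reciprocal of $\Psi\geq 1$. The chain rule together with the uniform derivative bounds on $\chi$, on the transition maps (from bounded geometry), and the positive lower bound on $\Psi$ yields, for every multi-index $\alpha$, a constant $C_\alpha$ (independent of $i$) such that $\absval{\partial^\alpha(\phi_i\circ\exp_{p_i})}\leq C_\alpha$ on $\oball{n}{2r}{0}$, which is property (2).

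The mildly delicate step is the last one: one must verify that the $1/\Psi$ factor does not spoil uniformity. Here the uniform overlap bound $N<\infty$ is essential, since it bounds the number of summands contributing in each chart and thus propagates the pointwise derivative bounds of the $\psi_k$ (themselves uniform because they all come from a single Euclidean model $\chi$ pulled back by uniformly smooth charts) into derivative bounds for $\Psi$ and, via Faà di Bruno applied to $1/\Psi$ using $\Psi\geq 1$, to $\phi_i$. I expect this bookkeeping to be the main obstacle, but it is essentially the same computation as in Shubin's original argument in \cite{shubinspec}.
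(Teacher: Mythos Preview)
Your construction is correct and is precisely the standard argument; the paper itself does not give an independent proof but simply refers to \cite[Lem.~1.2 and Lem.~1.4]{shubinspec}, whose proof you have essentially reproduced. There is nothing to add.
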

Having such a partition of unity enables to define Sobolev spaces $H^s(\Sigma)$ on manifolds with bounded geometry for any $s\in \R$ as follows:
\begin{equation}\label{sobolevboundedgeonorm}
\norm{u}{H^s(\Sigma)}^2:=\sum_{i}\norm{\phi u}{H^s(\oball{}{2r}{p_i}))}^2\quad.
\end{equation}
$H^s(\Sigma)$ is then defined to be the completion of $C^\infty_\comp(\Sigma)$ with respect to \clef{sobolevboundedgeonorm}. This can be easily extended to sections of a vector bundle of bounded geometry.\\
\\
An element $B$ in $\mathsf{B}\ydo{m}{\mathsf{prop}}(\Sigma,E)$ is a pseudo-differential operator of order $m \in \R$ with uniformly bounded symbol such that $B$ differs from the quantisation of the symbol in a smoothing operator on the open ball $\oball{n}{}{0}$ in any coordinate system. One also supposes in addition the following requirements:
\vspace*{0.5em}
\begin{itemize}
\item[a)] (c-locality): there exists a constant $c >0$ such that the Schwartz kernel $K_B$ of $B$ satisfies $K_B(x,y)=0$ if the geodesic distance is bigger than $c$.
\item[b)] For any $\epsilon > 0$ all covariant derivatives of the Schwartz kernel are uniformly bounded on $(\Sigma\times \Sigma)\setminus \mathcal{U}_\epsilon$ where $\mathcal{U}_\epsilon$ is an $\epsilon$-neighbourhood of the diagonal in $\Sigma$.
\end{itemize}
\vspace*{0.5em}
If the complete symbol admits a uniformly asymptotic expansion in any coordinate system, the related pseudo-differential operator is called classical. A smoothing operator is a continuous map between $C^\infty_\comp(M,E)$ and $C^{-\infty}(M,E)$ which extends to a continuous mapping between any Sobolev spaces $H^s(\Sigma,E)$ of bounded geometry. We denote the class of such operators with $\mathsf{B}\ydo{-\infty}{}(\Sigma,E)$. Any uniformly bounded pseudo-differential operator of order $m \in \R$ can be represented as a sum of a smoothing and a uniformly bounded, properly supported pseudo-differential operator of the same order.\\
\\
From now on, let $B \in \mathsf{B}\ydo{m}{\mathsf{prop,cl}}(\Sigma,E)$ be a uniformly elliptic and positive operator with scalar positive definite principal symbol ${\sigma}_m(B)(p,\xi)$ which lies inside a sector. Following \cite{kordyu}, one can construct a parametrix $C(\lambda)$ for the operator $(\lambda-B)$ for $\lambda \in \C\setminus\R_{+}$ as for these values $(\lambda-B)$ stays elliptic. It is shown that $C(\lambda)\in \mathsf{B}\ydo{-m}{}(\Sigma,E)$ and a remainder $R(\lambda)\in\mathsf{B}\ydo{-\infty}{}(\Sigma,E)$ are bounded maps between Sobolev spaces of bounded geometry for those $\lambda \in \Lambda_\theta:=\SET{\lambda\in \C\,\vert\, \absval{\mathrm{arg}(\lambda)} > \theta}$ with $\theta \in (0,\uppi/2)$.
$\Lambda_\theta$ is a sector in the complex plane with vertex at $0\in \C$. W.l.o.g. we assume that $\Lambda_\theta$ does not contain any point $\lambda^\ast$ such that the principal symbol of $B$ is vanishing for all $\xi\neq 0$. The construction of the parametrix works as usual by inverting the principal symbol in the asymptotic expansion of the complete symbol of $B$, giving the principal symbol of $C(\lambda)$, while the subprincipal symbols are defined recursively via transport equation. The asymptotic summation of these symbols yields a local parametrix and by gluing all coordinate patches with the partition of unity from \Cref{manboundgeopartun}, it gives a global parametrix. \\
\\
W.l.o.g. we assume that $\Lambda_\theta$ does not intersect with the spectrum $\upsigma(B)$ of $B$. Then $(\lambda-B)$ becomes invertible as unbounded operator with resolvent $\mathsf{R}(B,\lambda)=(\lambda-B)^{-1}$ which is bounded on $H^s(\Sigma,E)$. Thus, for any $\lambda \in \Lambda_{\theta,r}:=\Lambda_\theta\cap\SET{\lambda \in \C\,\vert\, r < \absval{\lambda}}$ with $r>0$ the resolvent can be expressed with the parametrix: $\mathsf{R}(B,\lambda) = C(\lambda)-\mathsf{R}(B,\lambda)\circ R(\lambda)$. The composition on the right-hand side is again a smoothing operator and finally $\mathsf{R}(B,\lambda)\in \mathsf{B}\ydo{-m}{}(\Sigma,E)$. In addition, the resolvent becomes a bounded operator from $H^s(\Sigma,E)$ to $H^{s+m}(\Sigma,E))$ for any $s\in \R$ and $\lambda \in \Lambda_{\theta,r}$ with $r>0$ and $\theta \in (0,\uppi/2)$. This makes $B$ a sectorial operator with sector $\Lambda_{\theta,r}^\complement$. The holomorphic functional calculus for sectorial operators is herewith justiefied to apply: let $f$ be a function, which can be extended to an entire function, such that for any $\eta \in \R$ the function $\R_{+}\ni x\mapsto f(x+\Imag y)$ is a Schwartz function with uniformly bounded seminorms on compact subsets in $\R$. $f(B)$ can be expressed as Cauchy integral 
\vspace*{-1em}
\begin{figure}[H]
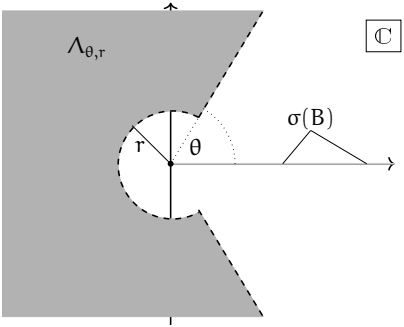

\centering
\begin{minipage}{0.35\textwidth}
\includestandalone[width=\textwidth]{pictures/TIKZpictures/pic8_1b}
\caption{Keyhole-sector $\Lambda_{\theta,r}$.}\label{sectorskordyukov}
\end{minipage}
\hspace{\fill}
\begin{minipage}{0.6\textwidth}
\begin{equation}\label{cauchyoperator}
f(B):=\frac{1}{2\uppi \Imag}\int_\gamma f(\lambda)\mathsf{R}(B,\lambda)\differ \lambda
\end{equation}
\vspace*{0.25em}
\noindent where $\gamma$ is a Hankel-like contour which acts as boundary curve of the keyhole-sector $\Lambda_{\theta,r}$. This enables to consider the functions $f(B)=\expe{-tB}$ for $t\in \R_{+}$ such that $B$ becomes a generator of a parabolic semigroup with holomorphic extension for $t\in \C$. Complex powers ($z\in \C$) can be either defined directly by setting $f(\lambda)=\lambda^z$ with the branch chosen, such that $\lambda^z=\expe{z\log{\lambda}}$ for $\lambda >0$, or with the parabolic semigroup via
\begin{equation}\label{complexpowers}
B^z:=\frac{1}{\Gamma(-z)}\int_0^\infty x^{-(z+1)}\expe{-xB}\differ x
\end{equation}
for $\Rep{z}<0$ and the Gamma function $\Gamma(-z)$ which is holomorphic for complex arguments with positive real part.
\end{minipage}

\end{figure}
\noindent The semigroup property then carries over to the case with complex powers: let $z,w \in \C$ have negative real parts then $B^w\circ B^z=B^{z+w}$. 
This semigroup property can be extended to powers with positive real part: we choose for $\Rep{z}>0$ a $(-k) \in \N_0$ such that $\Rep{z+k}<0$. Hence, $B^{-k}$, $B^{z+k}$ and their composition are defined such that we can define complex powers of $B$ for any $z\in \C$ with positive real part via $B^z:=B^{z+k}\circ B^{-k}$.
It is clear from the definitions in \clef{complexpowers} that $B^z$ is a uniformly bounded pseudo-differential operator. The principal symbol of $B^z$ for $\Rep{z}<0$ is defined by \clef{cauchyoperator} where the resolvent is replaced with $({\sigma}_{m}(B)(p,\xi)-\lambda)^{-1}$:
\begin{equation}\label{princsymbcomplexpower}
{\sigma}_{m\Rep{z}}(B^z)(p,\xi):=\frac{1}{2\uppi\Imag}\int_\gamma \frac{\lambda^z}{\lambda-{\sigma}_{m}(B)(p,\xi)} \differ \lambda=({\sigma}_{m}(B)(p,\xi))^{z} \quad.
\end{equation}
The second equality follows with a contour integration argument. The factorising property of the principal symbol and the semigroup-property of complex powers allows to extend the equality to all $z\in \C$. 
We sum up the results with all necessary details.
\begin{prop}[cf. Proposition 1 in \cite{kordyu}]\label{kordyuresults}
Let $B \in \mathsf{B}\ydo{m}{\mathsf{prop,cl}}(\Sigma,E)$ be a positive and uniformly elliptic operator with scalar positive definite principal symbol ${\sigma}_m(B)(p,\xi)$ such that $({\sigma}_m(B)(p,\xi)-\lambda)$ is not vanishing for $\lambda$ in a sector $\Lambda_{\theta}\subset\uprho(B)$ for $\theta \in (0,\uppi/2)$ and $\xi\in \dot{T}_p\Sigma$. Then the following holds: 
\begin{itemize}
\item[(1)] there exists a value $r>0$ such that for $\lambda \in \Lambda_{\theta,r}\subset \C$ the operator $B$ becomes sectorial and the resolvent satisfies $\mathsf{R}(B,\lambda)\in \mathsf{B}\ydo{-m}{}(\Sigma,E)$ with principal symbol $({\sigma}_m(B)(p,\xi)-\lambda)^{-1}$; moreover $\mathsf{R}(B,\lambda)$ becomes a bounded operator from $H^s(\Sigma,E)$ to $H^{s+m}(\Sigma,E)$ for any $s\in \R$.
\item[(2)] $B$ generates a holomorphic semigroup $\expe{-zB}$ for $\Rep{z}>0$ in $L^2(\Sigma,E)$.
\item[(3)] $B^z \in \mathsf{B}\ydo{m\Rep{z}}{}(\Sigma,E)$ for all $z\in \C$ with principal symbol \clef{princsymbcomplexpower}.
\end{itemize}
\end{prop}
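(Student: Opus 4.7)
The proof follows the classical Seeley strategy, adapted to the bounded-geometry setting through a uniform symbol calculus with parameter. The overall plan is: first construct a parametrix $C(\lambda)$ for $(\lambda-B)$ with $\lambda$ running over the sector; then upgrade it to the true resolvent by a Neumann argument, which gives (1); finally deduce the holomorphic semigroup of (2) and the complex powers of (3) by standard contour integrals over the Hankel-like curve $\gamma$ from \Cref{sectorskordyukov}.

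For (1), on each coordinate chart of bounded geometry I would build local parametrices using the Shubin-type $\Psi$DO calculus with parameter $\lambda$. The leading symbol is $({\sigma}_m(B)(p,\xi)-\lambda)^{-1}$; by the hypothesis that this expression does not vanish on $\Lambda_\theta$ for $\xi\in \dot T^\ast_p\Sigma$, it is $\lambda$-uniformly of order $-m$. Subprincipal symbols are obtained by the usual recursion (formal inversion of the total symbol of $\lambda-B$) and inherit the $\lambda$-uniform estimates, and asymptotic summation yields the local parametrix. Gluing via the partition of unity of \Cref{manboundgeopartun} together with the bounded-geometry transition maps produces a global $C(\lambda)\in\mathsf{B}\ydo{-m}{}(\Sigma,E)$ satisfying $(\lambda-B)C(\lambda)=\Iop{}+R(\lambda)$ with $R(\lambda)\in\mathsf{B}\ydo{-\infty}{}(\Sigma,E)$. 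For $\lambda\in\Lambda_{\theta,r}$ with $r$ sufficiently large one has $\|R(\lambda)\|_{L^2\to L^2}\leq 1/2$, so $(\Iop{}+R(\lambda))^{-1}$ exists by Neumann series with smoothing correction; hence $\Lambda_{\theta,r}\subset\uprho(B)$ and $\mathsf{R}(B,\lambda)=C(\lambda)(\Iop{}+R(\lambda))^{-1}\in\mathsf{B}\ydo{-m}{}(\Sigma,E)$ has the prescribed principal symbol. The mapping property $H^s(\Sigma,E)\to H^{s+m}(\Sigma,E)$ is automatic for order-$(-m)$ operators in this class, and together with the resolvent bound $\|\mathsf{R}(B,\lambda)\|_{L^2}\leq C|\lambda|^{-1}$ read off from the leading symbol this is exactly sectoriality.

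With sectoriality established, (2) is the standard Hille–Yosida-type conclusion: the Dunford integral $\expe{-zB}=(2\uppi\Imag)^{-1}\int_\gamma \expe{-z\lambda}\mathsf{R}(B,\lambda)\differ \lambda$ converges absolutely for $\Rep z>0$ and defines a holomorphic family in $\mathscr{B}(L^2(\Sigma,E))$ obeying the semigroup law. For (3), I would first set $B^z$ with $\Rep z<0$ by \clef{cauchyoperator}, taking $f(\lambda)=\lambda^z$ on the principal branch with cut disjoint from the contour, and verify using the uniform symbol bounds on $\mathsf{R}(B,\lambda)$ that the integral produces an element of $\mathsf{B}\ydo{m\Rep z}{}(\Sigma,E)$. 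Extension to arbitrary $z\in\C$ is by $B^z:=B^{z+k}\circ B^{-k}$ for $k\in\N$ with $\Rep(z+k)>0$, the composition being well defined because of the semigroup property. The principal symbol formula \clef{princsymbcomplexpower} then follows by applying the residue-type identity $(2\uppi\Imag)^{-1}\int_\gamma \lambda^z(\lambda-a)^{-1}\differ\lambda=a^z$ fibrewise to the leading symbol of $\mathsf{R}(B,\lambda)$, the Neumann correction being smoothing and hence invisible at the principal level.

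The main obstacle is the analytic bookkeeping for the parameter-dependent symbol calculus: one has to verify that the bounded-geometry $\Psi$DO class is stable under large-parameter Shubin-type symbols (derivatives gaining decay jointly in $|\xi|$ and $|\lambda|$), and that gluing local parametrices through the bounded-geometry partition of unity preserves these $\lambda$-uniform estimates globally, so that the smoothing remainder $R(\lambda)$ is indeed uniformly contractive on $\Lambda_{\theta,r}$. Once this foundation is laid, parts (1)–(3) fall out by successive contour manipulations and the symbol identity above, so the whole proposition reduces to verifying that Kordyukov's uniform calculus on manifolds of bounded geometry admits a sectorial parameter extension.
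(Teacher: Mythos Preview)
Your proposal is essentially the same as the paper's treatment: the paper does not give a formal proof either, but sketches exactly the parametrix-with-parameter construction (invert the principal symbol, recursive transport equations for subprincipal symbols, asymptotic summation, glue via the bounded-geometry partition of unity of \Cref{manboundgeopartun}), deduces that the resolvent lies in $\mathsf{B}\ydo{-m}{}$, and then obtains the semigroup and complex powers by the Dunford/Hankel contour integral \clef{cauchyoperator} with the extension to $\Rep z\geq 0$ via the semigroup law, all attributed to Kordyukov. One minor slip: in your extension step you write $\Rep(z+k)>0$, but for $B^{z+k}$ to be defined by the contour integral you need $\Rep(z+k)<0$ (the paper has the same convention); also note that the hypothesis already places $\Lambda_\theta$ in the resolvent set, so the Neumann-series step is used only to identify $\mathsf{R}(B,\lambda)$ as a $\mathsf{B}\ydo{-m}{}$ operator, not to establish invertibility.
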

The last assertion is an equivalent of Seeley's theorem for complex powers in the setting of manifolds and vector bundles with bounded geometry.\\
\\ 
We have already pointed out in \Cref{remfun} (2) that a Galois covering with compact base can be viewed as manifold of bounded geometry. The role of the vector bundle of bounded geometry $E$ reduces to a $\Gamma$-vector bundle. The subordinated partition of unity is also uniformly bounded due to $\Gamma$-invariance such that the Sobolev spaces of bounded geometry transfer to the $\Gamma$-Sobolev spaces in the $\Gamma$-setting. Any pseudo-differential operator, which acts between sections of a $\Gamma$-vector bundles, can be considered as uniformly bounded pseudo-differential operator, acting between vector bundles of bounded geometry. Elements in $\ydo{\ast}{\Gamma,\mathsf{prop}}(\Sigma,E)$ correspond to uniformly bounded pseudo-differential operators which commute with $L^E_\gamma$ for all $\gamma \in \Gamma$. The $c$-locality property is implied by $\Gamma$-invariance and properly supportness. The space $\mathsf{B}\ydo{-\infty}{}(\Sigma,E)$ corresponds to s-smoothing operators $S\ydo{-\infty}{\Gamma}(\Sigma,E)$ such that $\mathsf{B}\ydo{m}{(\mathsf{cl})}(\Sigma,E)$ corresponds to (classical) s-regular operators $S\ydo{m}{\Gamma,(\mathrm{cl})}(\Sigma,E)$.
\begin{table}[H]
\centering
\begin{tabular}{|c c c|}
\hline
$\Sigma$ has bounded geometry & $\rightarrow$ & $\Sigma$ $\Gamma$-manifold \\
\hline
$E$ has bounded geometry &$\rightarrow$ & $E$ $\Gamma$-vector bundle \\
\hline
$H^s(\Sigma,E)$ &$\rightarrow$ & $H^s_\Gamma(\Sigma,E)$ \\
\hline
$\mathsf{B}\ydo{m}{\mathsf{prop}}(\Sigma,E)$ &$\rightarrow$ & $\ydo{m}{\Gamma,\mathsf{prop}}(\Sigma,E)$ \\
\hline
$\mathsf{B}\ydo{-\infty}{}(\Sigma,E)$ &$\rightarrow$ & $S\ydo{-\infty}{\Gamma}(\Sigma,E)$ \\
\hline
$\mathsf{B}\ydo{m}{}(\Sigma,E)$ &$\rightarrow$ & $S\ydo{m}{\Gamma}(\Sigma,E)$ \\
\hline
\end{tabular}
\caption{Correspondences between quantities in the bounded geometry setting to the one in $\Gamma$-setting.}
\end{table}

\end{document}